\date{4 February 2021}
\definecolor{darkred}{rgb}{1,0,0} %can change the intensity in [0,1]
\definecolor{darkgreen}{rgb}{0,1,0}
\definecolor{darkblue}{rgb}{0,0,1}
\theoremstyle{plain}  % default
\newtheorem{theorem}{Theorem}[section]
\newtheorem*{theoremA}{Theorem A}
\newtheorem*{theoremB}{Theorem B}
\newtheorem*{theoremC}{Theorem C}
\newtheorem*{theorem*}{Theorem}
\newtheorem{corollary}[theorem]{Corollary}
\newtheorem{proposition}[theorem]{Proposition}
\newtheorem{tech-lemma}[theorem]{Technical Lemma}
\newtheorem{definition}[theorem]{Definition}
\theoremstyle{remark}
\newtheorem{example}[theorem]{Example}
\newtheorem{remark}[theorem]{Remark}
\newtheorem*{remark*}{Remark}
\newtheorem*{claim*}{Claim}
\newtheoremstyle{TheoremForIntro} %%LABELS THEOREMS FOR INTRO WITH SAME NUMBER AS LATER APPEARS, PUT \ref{...} AS NAME OF THEOREM
        {.6em}{.6em}              %%% space between body and thm
        {\itshape}                      %%% Thm body font
        {}                              %%% Indent amount (empty = no indent)
        {\bfseries}                     %%% Thm head font
        {.}                             %%% Punctuation after thm head
        { }                             %%% Space after thm head
        {\thmname{#1}\thmnote{ \bfseries #3}}%%% Thm head spec
    \theoremstyle{TheoremForIntro}
\numberwithin{equation}{section}
\renewcommand{\leq}{\leqslant}
\renewcommand{\geq}{\geqslant}
\newcommand{\R}{\mathbb{R}}
\newcommand{\Z}{\mathbb{Z}}
\newcommand{\C}{\mathbb{C}}
\newcommand{\cE}{{\mathcal E}}
\newcommand{\cM}{{\mathcal M}}
\newcommand{\cN}{{\mathcal N}}
\newcommand{\cO}{{\mathcal O}}
\newcommand{\cR}{{\mathcal R}}
\newcommand{\rPSL}{\mathrm{PSL}}
\newcommand{\rSU}{\mathrm{SU}}
\newcommand{\rA}{\mathrm{A}}
\newcommand{\rB}{\mathrm{B}}
\newcommand{\rS}{\mathrm{S}}
\newcommand{\rK}{\mathrm{K}}
\newcommand{\rF}{\mathrm{F}}
\newcommand{\rE}{\mathrm{E}}
\newcommand{\rT}{\mathrm{T}}
\newcommand{\rP}{\mathrm{P}}
\newcommand{\rG}{\mathrm{G}}
\newcommand{\rC}{\mathrm{C}}
\newcommand{\rGL}{\mathrm{GL}}
\newcommand{\rL}{\mathrm{L}}
\newcommand{\rSL}{\mathrm{SL}}
\newcommand{\rSO}{\mathrm{SO}}
\newcommand{\rO}{\mathrm{O}}
\newcommand{\rU}{\mathrm{U}}
\newcommand{\fgl}{\mathfrak{gl}}
\newcommand{\fsl}{\mathfrak{sl}}
\newcommand{\fso}{\mathfrak{so}}
\newcommand{\fsp}{\mathfrak{sp}}
\newcommand{\fc}{\mathfrak{c}}
\newcommand{\fg}{\mathfrak{g}}
\newcommand{\fh}{\mathfrak{h}}
\newcommand{\fk}{\mathfrak{k}}
\newcommand{\fl}{\mathfrak{l}}
\newcommand{\fm}{\mathfrak{m}}
\newcommand{\fp}{\mathfrak{p}}
\newcommand{\fq}{\mathfrak{q}}
\newcommand{\ft}{\mathfrak{t}}
\newcommand{\fu}{\mathfrak{u}}
\newcommand{\fz}{\mathfrak{z}}
\DeclareMathOperator{\ad}{ad}
\DeclareMathOperator{\Ad}{Ad}
\DeclareMathOperator{\tr}{tr}
\DeclareMathOperator{\rk}{rk}
\DeclareMathOperator{\Hom}{Hom}
\DeclareMathOperator{\End}{End}
\DeclareMathOperator{\Id}{Id}
\newcommand{\rH}{\mathrm{H}}
\newcommand{\smtrx}[1]{\left (\begin{smallmatrix}#1\end{smallmatrix}\right)}
\let\oldmarginpar\marginpar
\renewcommand\marginpar[1]{\oldmarginpar{\tiny\bf\begin{flushleft} #1
\end{flushleft}}}
\begin{document}

%%%%%%%%%%%%%%%%%%%%%%%%%%%%%%%%%%%%%%%%%%%%%%%%%%%%%%%%%%%%%%%%%%%%%%%%%%%%%%%
\title[Arakelov--Milnor inequalities and maximal VHS]
{Arakelov--Milnor inequalities and maximal variations of Hodge structure}
%%%%%%%%%%%%%%%%%%%%%%%%%%%%%%%%%%%%%%%%%%%%%%%%%%%%%%%%%%%%%%%%%%%%%%%%%%%%%%%

\author[Biquard]{Olivier Biquard}
\address{Sorbonne Universit\'e and Universit\'e de Paris, CNRS, IMJ-PRG, F-75006 Paris, France} 
\email{olivier.biquard@sorbonne-universite.fr}

\author[Collier]{Brian Collier}
\address{900 University Ave\\ Riverside, California 92521\\
USA}
 \email{brian.collier@ucr.edu}

\author[Garc{\'\i}a-Prada]{Oscar Garc{\'\i}a-Prada}
\address{Instituto de Ciencias Matem\'aticas \\
 Nicol\'as Cabrera, 13--15 \\ 28049 Madrid \\ Spain}
\email{oscar.garcia-prada@icmat.es}

\author[Toledo] {Domingo Toledo}
\address{ Department of Mathematics \\
University of Utah\\
Salt Lake City, UT 84112 }
\email{toledo@math.utah.edu}

\thanks{
%%%%%%%%%%%%%%%%%%%%%%%%%%%%%%%%%%%%%%%%%%%%%%%%%%%%%%%%%%%%%%%%%%
The second author was partially supported by supported by the NSF under Award No. 1604263 and NSF grants DMS-1107452, 1107263 and 1107367 “RNMS: GEometric structures And Representation varieties” (the GEAR Network). The third author was partially supported by the Spanish MINECO under the 
ICMAT Severo Ochoa  grant No.SEV-2015-0554, and grant No. MTM2016-81048-P. 
%%%%%%%%%%%%%%%%%%%%%%%%%%%%%%%%%%%%%%%%%%%%%%%%%%%%%%%%%%%%%%%%%%
}

\subjclass[2010]{Primary 14H60; Secondary 57R57, 58D29}

\begin{abstract}
In this paper we study the $\C^*$-fixed points in moduli spaces of Higgs bundles over a compact Riemann surface for a complex semisimple Lie group and its real forms. These fixed points are called Hodge bundles and correspond to complex variations of Hodge structure. 
We introduce a topological invariant for Hodge bundles that generalizes the Toledo invariant appearing for Hermitian Lie groups. 
A main result of this paper is a bound on this invariant which generalizes both the Milnor--Wood inequality of the Hermitian case and the Arakelov inequalities of classical variations of Hodge structure.
When the generalized Toledo invariant is maximal, we establish rigidity results for the associated variations of Hodge structure which generalize known rigidity results for maximal Higgs bundles and their associated maximal representations in the Hermitian case.  
\end{abstract}

\maketitle

%\begin{flushright}
%{\it El andar por Toledo cuesta trabajo,\\ 
%lo mismo cuesta arriba, que cuesta abajo.} 
%\\ \vspace*{0.25cm} 

%(Walking around Toledo is very hard, \\
%whether you go down, or you go up.)
%\vspace*{0.25cm}

%Anonymous

%\end{flushright}
% \tableofcontents

%%%%%%%%%%%%%%%%%%%%%%%%
\section{Introduction}
%%%%%%%%%%%%%%%%%%%%%%%%
Since their introduction in Hitchin's seminal paper \cite{selfduality}, Higgs bundles over a 
compact Riemann surface have been of tremendous interest in geometry, topology and theoretical physics.
Within the moduli space of Higgs bundles there is a special subvariety determined by the fixed points of a natural $\C^*$-action. These fixed points are called Hodge bundles and correspond to holonomies of complex variations of Hodge structure. 
They are part of the global nilpotent cone, and coincide with critical points of a natural energy function on the moduli space.
Another importance of the $\C^*$-fixed points stems from the fact that, roughly speaking, the subvariety of Hodge bundles determines the topology of the moduli space of Higgs bundles
(see \cite{selfduality,GPHSMotivesHiggs,GPHygenus,GothenBettirk3}). In this paper, 
we investigate some basic properties of Hodge bundles and their moduli. 

To describe our results, let $\rG$ be a complex semisimple Lie group with Lie algebra $\fg$ and $X$ be a compact Riemann surface with genus $g\geq 2$ and canonical bundle $K.$ 
A $\rG$-Higgs bundle on $X$ is a pair $(E,\varphi)$, where $E$ is a holomorphic principal $\rG$-bundle on $X$ and $\varphi\in H^0(E(\fg)\otimes K)$ is a holomorphic section of the Lie algebra bundle twisted by $K.$ 
The moduli space of polystable $\rG$-Higgs bundles will be denoted by $\cM(\rG)$. By the nonabelian Hodge correspondence, the moduli space $\cM(\rG)$ is homeomorphic to the character variety $\cR(\rG)$ of conjugacy classes of reductive representations $\rho:\pi_1(X)\to\rG.$ 

The $\C^*$-action on the Higgs bundle moduli space is defined by $\lambda\cdot(E,\varphi)=(E,\lambda\varphi)$. 
To describe the $\C^*$-fixed points, fix a $\Z$-grading $\fg=\bigoplus_{j\in\Z}\fg_j$. Let $\zeta\in\fg_0$ be the grading element, i.e., $[\zeta,x]=jx$ for all $x\in\fg_j$, and let $\rG_0<\rG$ be the centralizer of $\zeta.$ A Higgs bundle $(E,\varphi)$ is said to be a Hodge bundle of type $(\rG_0,\fg_k)$ if $E$ reduces to a holomorphic $\rG_0$-bundle $E_{\rG_0},$ and $\varphi\in H^0(E_{\rG_0}(\fg_k)\otimes K).$ 
A polystable Higgs bundles is a fixed point if and only if it is a Hodge bundle \cite{localsystems}. In fact it suffices to consider Hodge bundles of type $(\rG_0,\fg_1)$ (see \S\ref{sec moduli fixed} for more details). 

The representations $\rho:\pi_1(X)\to\rG$ associated to Hodge bundles factor through a real form $\rho:\pi_1(X)\to\rG^\R\to\rG$ of Hodge type, i.e., a real form such that the rank of its maximal compact subgroup is equal to the rank of $\rG.$ 
% In fact, Higgs bundles and the nonabelian Hodge correspondence can be set up for real reductive Lie groups (see \S\ref{sec: higgs bundles nah} for details). 
If $\rH_0^\R<\rG_0$ is a maximal compact subgroup, then the homogeneous space $\rG^\R/\rH_0^\R$ has a natural complex structure and is called a period domain. The holomorphic tangent bundle of $\rG^\R/\rH_0^\R$ decomposes as $\bigoplus_{j\geq 0}\rG^\R\times_{\rH_0^\R}\fg_j$. 
In \cite{localsystems}, Simpson showed that the representations $\rho:\pi_1(X)\to\rG^\R<\rG$ arising from Hodge bundles of type $(\rG_0,\fg_1)$
define $\rho$-equivariant holomorphic maps
\[f_\rho:\widetilde X\to\rG^\R/\rH_0^\R\]
such that $\partial f_\rho$ is valued in the first graded piece $\rG^\R\times_{\rH_0^\R}\fg_1$ of the holomorphic tangent bundle. Such pairs $(\rho,f_\rho)$ are called variations of Hodge structure. 

When the grading in the above discussion is $\fg=\fg_{-1}\oplus\fg_0\oplus\fg_1,$ the real form $\rG^\R<\rG$ is a group of Hermitian type and the period domain $\rG^\R/\rH_0^\R$ is the Riemannian symmetric space of $\rG^\R.$ 
In this case, the symmetric space is K\"ahler and Hodge bundles define representations $\rho:\pi_1(X)\to\rG^\R$ and $\rho$-equivariant holomorphic maps to the symmetric space of $\rG^\R.$ 
Pulling back the K\"ahler form and integrating it over $X$ defines an invariant which is usually called the Toledo invariant $\tau(\rho)$. 
The Toledo invariant is defined for all representations into Hermitian Lie groups and satisfies the Milnor--Wood inequality
\[|\tau(\rho)|\leq (2g-2)\rk(\rG^\R/\rH^\R).\]
This inequality was first proven by Milnor \cite{MilnorMWinequality} for $\rPSL_2\R$, and more generally in \cite{MilnorWoodIneqDomicToledo,Burger-Iozzi-Wienhard-CR}. Using Higgs bundles, one can also define the Toledo invariant 
and obtain the Milnor--Wood inequality \cite{selfduality,UpqHiggs,BGRmaximalToledo}.

The set of representations which maximize the invariant are called maximal representations and have many interesting geometric features. 
For example, maximal representations define connected components of the character variety which consist entirely of discrete and faithful representations \cite{BIWmaximalToledoAnnals}. For $\rPSL_2\R$, maximal representations correspond to Fuchsian representations and the representation uniformizing the Riemann surface $X$ defines the unique maximal Hodge bundle.
There are two classes of Hermitian groups, those of tube type and those of nontube type. 
Generalizing the complex hyperbolic geometry results of \cite{ToledoSU(1n)Rigid}, maximal representations into nontube type groups always factor through a maximal subtube up to a compact factor \cite{UpqHiggs,Burger-Iozzi-Wienhard-CR,BGRmaximalToledo}. For example $\rSU(p,p)$ is the maximal subtube of $\rSU(p,q)$ when $q\geq p$. 

Similar to the Milnor--Wood inequality, in a somewhat different context, one has the Arakelov-type  inequalities for classical variations of 
Hodge structure.  These are generalizations of the classical Arakelov inequality for the degree of the relative canonical bundle of a family of curves over another curve (see, for example \cite{PetersRigidityVHSArakelov,Jost-Zuo,viehweg,ViehwegZuo,MollerViehwegZuo}).

Hodge bundles of type $(\rG_0,\fg_1)$ form their own moduli space $\cM(\rG_0,\fg_1)$. 
Consider a $\Z$-grading $\fg=\bigoplus_{j\in\Z}\fg_j$ with grading element $\zeta$ and define a character 
\[\chi_T:\fg_0\to\C~,~ \chi_T(x)=B(x,\zeta)B(\gamma,\gamma),\]
where $B$ is the Killing form of $\fg$ and $\gamma$ is the longest root such that the root space $\fg_\gamma\subset\fg_1.$ 
As explained in \S\ref{sec hol sec curv}, the normalization constant $B(\gamma,\gamma)$ normalizes the minimum of the holomorphic sectional curvature of the period domain to be $-1$. 
Recall that a Hodge bundle of type $(\rG_0,\fg_1)$ is a pair $(E,\varphi)$, where $E$ is a holomorphic $\rG_0$-bundle and $\varphi\in H^0(E(\fg_1)\otimes K).$ A multiple $q\chi_T$ for some positive integer $q$ lifts to a character $\chi:\rG_0\to\C^*$ and defines a line bundle $E(\chi).$ We define the Toledo invariant of a Hodge bundle $(E,\varphi)$ by
\[\tau(E,\varphi)=\frac{1}{q}\deg E(\chi).\]
For gradings $\fg_{-1}\oplus\fg_0\oplus\fg_1$, the character $\chi_T$ and the invariant $\tau$ agree with the definition of the Toledo character and the Toledo invariant for Hermitian groups in \cite{BGRmaximalToledo}. 

To generalize the notion of rank, we use the fact that the space $\fg_1$ is a prehomogeneous vector space for the action of $\rG_0.$ That is, $\fg_1$ has a unique open dense $\rG_0$-orbit $\Omega\subset\fg_1.$ 
The theory of prehomogeneous vector spaces was introduced by Sato (see \cite{SatoKimuraPHVS,KimuraIntroPHVS,MortajineRedbook,ManievelPrehom}) and provides an ideal set of tools to study our problem. Let $e\in \Omega\subset\fg_1$ be any point in the open orbit and complete it to an $\fsl_2$-triple $\{f,h,e\}$ with $h\in\fg_0.$ We define the rank of $(\rG_0,\fg_1)$ by 
\[\rk_T(\rG_0,\fg_1)=\frac{1}{2}\chi_T(h).\]
In \S \ref{sec toledo char and per dom} we show that this is independent of the choices made. Again, for Hermitian groups this definition recovers the rank of the symmetric space. 

Our first main result is:
\begin{theoremA}[Corollary \ref{cor am ineq for comp}]\label{theoremA}
  A polystable Hodge bundle $(E,\varphi)$ of type $(\rG_0,\fg_1)$ satisfies the inequality
\begin{equation}
  \label{eq intro am ineq}|\tau(E,\varphi)|\leq (2g-2)\rk_T(\rG_0,\fg_1).
\end{equation}
\end{theoremA}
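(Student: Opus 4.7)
The plan is to derive the bound from the polystability of $(E,\varphi)$ using a parabolic reduction of $E$ built from the prehomogeneous vector space structure of $(\rG_0,\fg_1)$.

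First, pick an element $e$ in the open orbit $\Omega\subset\fg_1$ and complete it to an $\fsl_2$-triple $\{f,h,e\}$ with $h\in\fg_0$. The semisimple element $h$ gives an $\ad(h)$-eigenspace decomposition $\fg=\bigoplus_k\fg^k$ that refines the $\zeta$-grading into a bigrading $\fg_j=\bigoplus_k\fg_j^k$. The non-negative part $\fp_h=\bigoplus_{k\geq 0}\fg_0^k$ is a parabolic subalgebra of $\fg_0$, namely the Jacobson--Morozov parabolic of $e$ inside $\rG_0$; write $\rP_h\subset\rG_0$ for the corresponding parabolic subgroup.

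Next, I would produce a holomorphic reduction of $E$ to $\rP_h$ compatible with $\varphi$. If $\varphi$ is generically valued in $\Omega$, then over the Zariski-open subset of $X$ where this holds, $\varphi$ determines a $\rP_h$-reduction of $E$; holomorphicity of $\varphi$ and a Hartogs-type extension argument then give a global holomorphic reduction $E_{\rP_h}\subset E$, along which $\varphi$ takes values in $E_{\rP_h}(\fg_1^{\geq 2})\otimes K$ (note $e\in\fg_1^2$). If $\varphi$ is instead generically valued in a proper closed $\rG_0$-orbit of $\fg_1$, the same strategy applied to a smaller prehomogeneous piece yields an even stronger reduction, and a fortiori a better bound.

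Then apply the polystability of $(E,\varphi)$ to the reduction $E_{\rP_h}$. The Toledo character $\chi_T$ restricts to $\fp_h$ and, by construction, satisfies $\chi_T(h)=2\rk_T(\rG_0,\fg_1)$; one verifies it is antidominant for $\rP_h$ in the relevant sense. Polystability of the Higgs pair with respect to this parabolic reduction, together with the Chern--Weil identity $\tau(E,\varphi)=\tfrac{1}{q}\deg E(\chi)$, yields the numerical bound
\[\tau(E,\varphi)\leq(2g-2)\,\rk_T(\rG_0,\fg_1),\]
the factor $2g-2=\deg(K)$ coming from the $K$-twist of $\varphi$ in the slope comparison. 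Applying the argument to $-\chi_T$ (or to the conjugate Higgs bundle) gives the matching lower bound, hence the two-sided inequality.

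The main obstacle is step two: extracting a genuinely holomorphic $\rP_h$-reduction from the generic behavior of $\varphi$ and checking its compatibility with the Higgs field. This requires carefully merging the orbit geometry of $\fg_1$ (stabilizer structure, orbit closures, the presentation of $\Omega$ as $\rG_0/\rZ_{\rG_0}(e)$) with the holomorphic features of $\varphi$, and treating the degenerate loci where $\varphi$ exits $\Omega$. Once the reduction is in place, the passage to the numerical bound via polystability and Chern--Weil parallels the Milnor--Wood arguments in the Hermitian case.
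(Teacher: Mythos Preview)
Your outline has the right architecture---build a parabolic reduction of $E$ from the orbit geometry of $\varphi$ and then feed it into the stability condition---and the parabolic $\rP_h\subset\rG_0$ you describe is indeed the one the paper uses (there written $\rP_{0,e}$, cut out by $s=\zeta-\tfrac{h}{2}$). But two genuine gaps remain.

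First, and most importantly, the step ``polystability \ldots\ yields the numerical bound, the factor $2g-2$ coming from the $K$-twist'' is not an argument. Testing stability against the reduction $E_\sigma$ and the element $s=\zeta-\tfrac{h}{2}$ gives only $\deg E(\sigma,s)\geq 0$; no $2g-2$ appears. The Toledo character $\chi_T$ is built from $\zeta$, not from $s$, so you cannot pair the stability inequality directly with $\chi_T$. The paper splits $\chi_T$ as $\hat\chi_T+\chi_s$ with $\hat\chi_T(x)=B(\gamma,\gamma)B(\tfrac{h}{2},x)$, and the crucial missing ingredient is a bound on the $\hat\chi_T$-piece. This comes from the \emph{relative invariant} $F$ of the JM-regular sub-phvs $(\hat\rG_0,\hat\fg_1)$: applying $F$ to $\varphi$ produces a nonzero holomorphic section of $E_\sigma(\hat\chi_{T,q})\otimes K^{q\cdot\rk_T}$, whose nonnegative degree gives $\deg_{\hat\chi_T}(E_\sigma)\geq -\rk_T\cdot(2g-2)$. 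That is where $2g-2$ enters, and without the relative invariant there is no mechanism to produce it.

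Second, the ``apply the argument to $-\chi_T$ or to the conjugate Higgs bundle'' step does not work: a $(\rG_0,\fg_1)$-pair has Higgs field only in $\fg_1$, so there is no dual object to run the same argument on, and the situation is genuinely asymmetric. In fact the paper shows $-\rk_T(\rG_0,\fg_1)(2g-2)\leq\tau(E,\varphi)\leq 0$: the upper bound $\tau\leq 0$ is the easy direction, obtained by testing semistability with the central element $-B(\gamma,\gamma)\zeta$ (no parabolic reduction needed), while the lower bound is the substantive one requiring the relative-invariant argument above.
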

We refer to the inequality \eqref{eq intro am ineq} as the Arakelov--Milnor inequality as it generalizes both the Milnor--Wood inequality and the Arakelov inequalities for variations of Hodge structure. We refer to Hodge bundles with $|\tau(E,\varphi)|=\rk_T(\rG_0,\fg_1)(2g-2)$ as maximal Hodge bundles of type $(\rG_0,\fg_1)$. As in the Hermitian case of \cite{BGRmaximalToledo}, the Arakelov--Milnor inequality follows from a more general and more precise inequality established in Theorem \ref{thm: Arakelov--Milnor ineq}. Indeed,  the stability of a Hodge bundle of type $(\rG_0,\fg_1)$ depends on a  parameter $\alpha=\lambda\zeta$ with $\lambda\in \R$. In Theorem \ref{thm: Arakelov--Milnor ineq} we give an inequality for an $\alpha$-semistable Hodge bundle 
$(E,\varphi)$. Moreover,
we give a more refined inequality  which depends on the orbit in $\fg_1$ which contains the generic value of $\varphi$. While we are mostly interested  in the case $\alpha=0$, since this relates to the stability of the Higgs bundle 
obtained from $(E,\varphi)$ by extension of structure group, considering the 
arbitrary value of $\alpha$ case has proven to be a powerful tool  to study the 
moduli space for $\alpha=0$  
(see \cite{UpqHiggs,GPHSMotivesHiggs}).
We believe that the same principle will apply in this general situation.

\begin{example}\label{running_example}
  The inequality is usually easy to write down in concrete cases. For example, for $\rG=\rSO_{2p+q}\C$ and $\rG_0=\rGL_p\C\times \rSO_q\C$, one finds that $\rk_T(\rG_0,\fg_1)=2\min(p,q)$ if $q>1$ and $1$ if $q=1$.
Let us take $q>1$. A bundle $(E,\varphi)$ of type $(\rG_0,\fg_1)$ can be written as $E=V\oplus W\oplus V^*$, where $V$ is a $\rGL_p\C$-bundle and $W$ is a $\rSO_q\C$-bundle, and the Higgs field has the form
\[ \varphi =\left(
  \begin{smallmatrix}
    0 & \theta & 0 \\ 0 & 0 & - \theta^T \\ 0 & 0 & 0 
  \end{smallmatrix}\right)\]
where $\theta:W\rightarrow V\otimes K$. One obtains $\tau(E,\varphi)=2 \deg V$, and it follows that the inequality (\ref{eq intro am ineq}) takes the form (actually $\deg V\leq0$)
\[ \deg V \geq - \min(p,q) (2g-2). \]
As mentioned above, we actually prove a more precise inequality, which depends on the orbit which contains the generic value of $\varphi$. In our case, if the image of $\theta^T$ in $W$ is the sum of a nondegenerate subspace of dimension $r_1$ and of a totally isotropic subspace of dimension $r_2$, then the inequality can be refined in
\begin{equation}
 \deg V \geq - (r_1+\tfrac{r_2}2) (2g-2).\label{eq:4}
\end{equation}
These values are calculated in Example \ref{calc_SO2pq}.
\end{example}
%\begin{remark}
%  remark about previous papers of Peters, Zhou .... \cite{PetersRigidityVHSArakelov}
%\end{remark}
Fundamental in the proof of the Arakelov--Milnor inequality \eqref{eq intro am ineq} is the  construction of a maximal Jacobson--Morozov regular prehomogeneous vector subspace of $(\rG_0,\fg_1)$ and the existence of a relative invariant for this subspace. This is the analogue of a maximal subtube in the Hermitian case.  
To explain this, let $\{f,h,e\}\subset\fg$ be an $\fsl_2$-triple such that $[h,e]=2e$ and $[e,f]=h.$ 
The weights of $\ad_h$ are integral. When the weights of $\ad_h$ are all even, the $\{f,h,e\}$ is called an even $\fsl_2$-triple and defines a $\Z$-grading $\fg=\bigoplus_{j\in\Z}\fg_j$ with grading element $\frac{h}{2}$ and $e\in\fg_1.$ In fact $e\in\Omega\subset\fg_1$ is in the open $\rG_0$-orbit (see \S\ref{sec phvs for Zgrad}). 
A prehomogeneous vector space $(\rG_0,\fg_1)$ is called regular if the $\rG_0$-stabilizer of a point in the open orbit $\Omega$ is reductive. 
For gradings arising from even $\fsl_2$-triples $\{f,h,e\}$, $(\rG_0,\fg_1)$ is regular since the $\rG_0$-centralizer of $e\in\Omega$ coincides with the $\rG$-centralizer of the $\fsl_2\C$-subalgebra. In this case, we refer to $(\rG_0,\fg_1)$ as a JM-regular prehomogeneous vector space. 

Fix a basis $\{f,h,e\}$ of $\fsl_2\C$ and let $\rT<\rPSL_2\C$ be the connected subgroup with Lie algebra $\langle h\rangle.$ The Hodge bundle associated to the representation uniformizing the Riemann surface $X$ is given by $(E_\rT,e)$, where $E_\rT$ is the holomorphic frame bundle of $K^{-1}$ and $e\in H^0(E_\rT(\langle e\rangle)\otimes K).$ For $\rSL_2\C,$ we take $E_\rT$ to be the frame bundle of a square root of $K^{-1}.$
Suppose $\{f,h,e\}\subset\fg$ is an even $\fsl_2$-triple and consider the associated JM-regular phvs $(\rG_0,\fg_1).$
From our set up, it follows that extending structure group $(E_\rT(\rG_0),e)$ defines a maximal Hodge bundle, i.e.,
\[\tau(E_\rT(\rG_0),e)=\rk_T(\rG_0,\fg_1)(2g-2).\]
If $\rC<\rG_0$ is the $\rG$-centralizer of the $\fsl_2\C$-subalgebra. Since $\rT$ and $\rC$ are commuting subgroups of $\rG_0$, we can form $\rG_0$-bundle $E_\rT\otimes E_\rC(\rG_0)$ out of $E_\rT$ and a holomorphic $\rC$-bundle $E_\rC$. Since $\rC$ acts trivially on $\langle e\rangle,$ we have $e\in H^0(E_\rC\otimes \rT(\langle e\rangle)\otimes K)$. Moreover, this process does not change the Toledo invariant 
and defines a map 
\[\Psi_e:\xymatrix@R=0em{\cN(\rC)\ar[r]&\cM^{\max}(\rG_0,\fg_1)\\E_\rC\ar@{|->}[r]&(E_\rC\otimes E_\rT(\rG_0),e)}\]
from the moduli space of degree zero polystable $\rC$-bundles to the moduli space of maximal Hodge bundles of type $(\rG_0,\fg_1).$
% % and describes all maximal Hodge bundles.
% \begin{theorem}(Theorem \ref{thm rigidity jm regular})
% Let $\rG$ be a complex semisimple Lie group with Lie algebra $\fg.$ Let $\{f,h,e\}\subset\fg$ be an even $\fsl_2$-triple and $\fg=\bigoplus_{j\in\Z}\fg_j$ be the $\Z$-grading with grading element $\frac{h}{2}$. Let $\rT<\rS<\rG$ be the connected subgroups with Lie algebras $\langle h\rangle$ and $\{f,h,e\}$ respectively, let $\rG_0<\rG$ be the $\rG$-centralizer of $h$  and $\rC<\rG_0$ be the $\rG$-centralizer of $\{f,h,e\}.$ Finally, let $E_\rT$ be the holomorphic frame bundle of $K^{-1}$ or $K^{-\frac{1}{2}}$ if $\rS\cong\rP\rSL_2\C$ or $\rS\cong\rSL_2\C$, respectively. Then, there is a well defined map 
% \[\Psi_e:\cN(\rC)\to\cM^{\max}(\rG_0,\fg_1)~;~ E_\rC\mapsto (E_\rT\otimes E_\rC(\rG_0), e)\]
% from the moduli space of polystable $\rC$-bundles to the moduli space of maximal Hodge bundles of type $(\rG_0,\fg_1)$ which is an isomorphism. 
% \end{theorem}  
\begin{theoremB}(Theorem \ref{thm rigidity jm regular})
   If $(\rG_0,\fg_1)$ is a JM-regular prehomogeneous vector space, then the map $\Psi_e$ defines an isomorphism between the moduli space of degree zero polystable $\rC$-bundles and the moduli space of maximal Hodge bundles of type $(\rG_0,\fg_1)$.
 \end{theoremB}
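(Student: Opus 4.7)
The plan is to establish bijectivity of $\Psi_e$ by using maximality to canonically reduce the structure group of any maximal Hodge bundle to $\rC$, matching the prescribed $\rT$-part against $E_\rT$.

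\textbf{Well-definedness.} One first checks that $\Psi_e(E_\rC)$ is a maximal Hodge bundle. Since $\rC$ fixes $e$ pointwise and $\rT$ scales it, the element $e$ descends to a section of $E_\rT(\langle e\rangle)\otimes K$; with $E_\rT$ chosen to be the frame bundle of $K^{-1}$ (respectively its square root), one has $E_\rT(\langle e\rangle)\cong K^{-1}$, so $e$ is realized as the constant section of $\cO_X$. Killing-orthogonality of $\fc$ and the $\fsl_2\C$-subalgebra shows that $\chi_T$ vanishes on $\fc$, so the Toledo invariant depends only on the $\rT$-factor, yielding $\tau=\rk_T(\rG_0,\fg_1)(2g-2)$. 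Polystability transfers across $\rC\hookrightarrow\rG_0$ because reductions of $\Psi_e(E_\rC)$ to parabolics of $\rG_0$ preserving $\varphi=e$ come from parabolics of $\rC$.

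\textbf{Surjectivity.} Let $(E,\varphi)$ be a polystable maximal Hodge bundle. The orbit-refined Arakelov--Milnor inequality of Theorem \ref{thm: Arakelov--Milnor ineq} (generalizing the refinement illustrated in Example \ref{running_example}) gives a bound that strictly decreases when the generic value of $\varphi$ lies in a non-open $\rG_0$-orbit of $\fg_1$; maximality therefore forces $\varphi(x)\in\Omega\otimes K_x$ generically, and combined with polystability this upgrades to the same holding pointwise. Since $\Omega\cong\rG_0/\rC$ and the $\rG_0$-stabilizer of the line $\langle e\rangle$ equals $\rT\cdot\rC$, the section of the projectivized adjoint bundle $\PP(E(\fg_1))$ induced by $\varphi$ (where the $K$-twist drops out) factors through the subbundle with fiber $\rG_0/(\rT\cdot\rC)$, producing a reduction $E_{\rT\cdot\rC}\subset E$. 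The weight line $L=E_{\rT\cdot\rC}(\langle e\rangle)$ carries $\varphi$ as a nowhere-vanishing section of $L\otimes K$, forcing $L\cong K^{-1}$; this pins down the $\rT$-part of $E_{\rT\cdot\rC}$ as the fixed bundle $E_\rT$, and the commuting $\rC$-factor provides the $\rC$-bundle $E_\rC$ with $\Psi_e(E_\rC)\cong(E,\varphi)$. Polystability of $(E,\varphi)$ transfers back to $E_\rC$ by the reverse of the argument above.

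\textbf{Injectivity and main obstacle.} Any isomorphism between $\Psi_e(E_\rC)$ and $\Psi_e(E'_\rC)$ intertwining the Higgs fields takes values pointwise in the $\rG_0$-stabilizer of $e$, which by JM-regularity coincides with $\rC$, and hence restricts to an isomorphism $E_\rC\to E'_\rC$. The crux of the entire argument is the passage from ``$\varphi$ generically in $\Omega$'' to ``$\varphi$ everywhere in $\Omega$'': one must analyze the equality case of the refined Arakelov--Milnor inequality in the presence of a degeneration divisor of $\varphi$ and show that such a divisor would either violate polystability of $(E,\varphi)$ or strictly decrease $\tau$, so none can exist. Once this is in hand, the structure-group reduction extends holomorphically across all of $X$ as a routine consequence of the prehomogeneous orbit structure and the reductivity of the stabilizer $\rC$.
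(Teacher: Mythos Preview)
Your overall architecture matches the paper's, and the well-definedness and injectivity steps are essentially the same. But the step you yourself flag as the crux---passing from ``$\varphi$ generically in $\Omega$'' to ``$\varphi$ everywhere in $\Omega$''---is left as a promissory note about analyzing a degeneration divisor, and that is precisely the place where the paper supplies a concrete mechanism that you are missing.

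The paper does not argue via the equality case of the refined inequality or via polystability ruling out a degeneration divisor. Instead it uses the \emph{relative invariant}: since $(\rG_0,\fg_1)$ is JM-regular, Proposition~\ref{prop JM reg toledo has rel invar} provides a polynomial relative invariant $F:\fg_1\to\C$ of degree $q\cdot\rk_T(\rG_0,\fg_1)$ for a lift $\chi_{T,q}$ of $q\chi_T$. Applying $F$ fibrewise to $\varphi$ gives a nonzero holomorphic section
\[
F(\varphi)\in H^0\bigl(E(\chi_{T,q})\otimes K^{\,q\,\rk_T(\rG_0,\fg_1)}\bigr),
\]
and the degree of this line bundle is $q\bigl(\tau(E,\varphi)+\rk_T(\rG_0,\fg_1)(2g-2)\bigr)$, which vanishes exactly by maximality. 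A nonzero holomorphic section of a degree-zero line bundle on a compact curve is nowhere vanishing, and since $F$ cuts out the singular set $S=\fg_1\setminus\Omega$, one concludes $\varphi(x)\in\Omega$ for \emph{every} $x\in X$ in one stroke. No divisor analysis or polystability argument is needed for this step. Once $\varphi$ is everywhere in $\Omega$, the paper twists by $E_\rT^{-1}$ so that $\varphi$ becomes a section of $(E_\rT^{-1}\otimes E)(\rG_0/\rC)$, giving the $\rC$-reduction directly; your projectivization route to a $\rT\cdot\rC$-reduction is a valid alternative, but the relative-invariant argument is the missing ingredient.
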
 
 \begin{remark}
   We note that the map $\Psi_e$ is a moduli space version of a restriction of the so called global Slodowy slice map of \cite{ColSandGlobalSlodowy}. In \cite{MagicalBCGGO}, an extension of the map $\Psi_e$ to the entire Slodowy slice gives rise to the Cayley correspondence  used to describe certain components of the moduli space of $\rG^\R$-Higgs bundles related to higher Teichm\"uller theory, generalizing the Cayley correspondence for  the Hermitian group case given   in \cite{BGRmaximalToledo}. 
   It would be interesting to extend these results to the full Slodowy slice for every even $\fsl_2$-triple. 
 \end{remark}
When $(\rG_0,\fg_1)$ is JM-regular, the representations $\rho:\pi_1(X)\to\rG$ associated to maximal Hodge bundles of type $(\rG_0,\fg_1)$ are described by the following theorem.
 \begin{theoremC}
   (Theorem \ref{thm jm reg rigidity of reps for maximal}) Fix a Riemann surface $X$ of genus $g\geq 2$ and suppose $(\rG_0,\fg_1)$ is a JM-regular prehomogeneous vector space associated to an even $\fsl_2$-triple $\{f,h,e\}$. Let $\rS<\rG$ be the associated connected subgroup and let $\rC$ be the $\rG$-centralizer of $\{f,h,e\}$. The Higgs bundle associated to a reductive representation $\rho:\pi_1(X)\to\rG$ is a maximal Hodge bundle of type $(\rG_0,\fg_1)$ if and only if $\rho$ is a product $\rho=\rho_u*\rho_\rC$, where
   \begin{itemize}
     \item $\rho_u:\pi_1(X)\to\rS^\R<\rS$ is the uniformizing $\rPSL_2\R$-representation of $X$ if $\rS\cong\rPSL_2\C$ and a lift of the uniformizing representation to $\rSL_2\R$ if $\rS\cong\rSL_2\C,$ and 
     \item $\rho_\rC:\pi_1(X)\to\rC^\R<\rC$ is any representation into the compact real form of $\rC.$
   \end{itemize}
 \end{theoremC}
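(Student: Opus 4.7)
The plan is to combine Theorem B with the nonabelian Hodge correspondence and classical uniformization. Theorem B identifies maximal Hodge bundles of type $(\rG_0,\fg_1)$ with polystable degree-zero $\rC$-bundles via the map $E_\rC\mapsto(E_\rC\otimes E_\rT(\rG_0),e)$, so the strategy is to match each side of this bijection with a representation and read off the product decomposition.

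Suppose first that $\rho=\rho_u*\rho_\rC$ as in the statement. By Hitchin's uniformization construction in \cite{selfduality}, the Higgs bundle of $\rho_u:\pi_1(X)\to\rS^\R$ is the $\rS$-Higgs bundle obtained from $(E_\rT,e)$ by extension of structure group from $\rT$ to $\rS$, and this is where the specific choice of $E_\rT$ (frame bundle of $K^{-1}$ or of a square root of $K^{-1}$) plays its role. By the Narasimhan--Seshadri--Ramanathan theorem, $\rho_\rC:\pi_1(X)\to\rC^\R$ corresponds to a polystable degree-zero $\rC$-bundle $E_\rC$ with vanishing Higgs field. Since $\rS$ and $\rC$ are commuting subgroups of $\rG$, the tensor product of the two harmonic metrics solves Hitchin's equations on $(E_\rC\otimes E_\rT(\rG_0),e)=\Psi_e(E_\rC)$; this is a maximal Hodge bundle of type $(\rG_0,\fg_1)$ by Theorem B, and its monodromy is $\rho$.

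Conversely, let $(E,\varphi)$ be the Higgs bundle of $\rho$ and assume it is maximal of type $(\rG_0,\fg_1)$. Theorem B furnishes a unique polystable degree-zero $\rC$-bundle $E_\rC$ with $(E,\varphi)\cong(E_\rC\otimes E_\rT(\rG_0),e)$. The harmonic metric decomposes accordingly as a tensor product of a flat unitary metric on $E_\rC$ (giving a $\rC^\R$-valued representation $\rho_\rC$, since the Higgs field on the $\rC$-factor vanishes) and the harmonic metric on $(E_\rT(\rG_0),e)$ induced by the hyperbolic uniformization of $X$ (giving an $\rS^\R$-valued representation $\rho_u$, which is the Fuchsian uniformization or a chosen lift thereof). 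Because $\rS$ and $\rC$ commute in $\rG$, the holonomy of $\rho$ factors as $\rho_u*\rho_\rC$.

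The principal technical difficulty is the isogeny $\rS\times\rC\to\rS\cdot\rC<\rG$ with finite central kernel $\rS\cap\rC$: one must verify that $\rho$ genuinely lifts so that $\rho_u$ and $\rho_\rC$ are individually well-defined homomorphisms on $\pi_1(X)$. This is precisely why the theorem specifies the $\rPSL_2\R$-uniformization when $\rS\cong\rPSL_2\C$ but only a \emph{lift} to $\rSL_2\R$ when $\rS\cong\rSL_2\C$: the choice of square root of $K^{-1}$ used to build $E_\rT$ in the $\rSL_2\C$ case supplies exactly the theta-characteristic-type data that pins down the lift. Once this compatibility is confirmed, identifying $\rho_u$ with the Fuchsian uniformization is the classical Hitchin result for the $\rSL_2\C$ and $\rPSL_2\C$ components of the Higgs bundle moduli space.
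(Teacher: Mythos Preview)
Your proposal is correct and follows essentially the same approach as the paper: apply Theorem B to write a maximal Hodge bundle as $(E_\rT\otimes E_\rC(\rG_0),e)$, observe that the harmonic metric splits as $h_\rT\otimes h_\rC$ so the flat connection is $A_{h_\rT}+e-\tau(e)+A_{h_\rC}$, and read off the product decomposition $\rho=\rho_u*\rho_\rC$ via the nonabelian Hodge correspondence. Your final paragraph on the isogeny $\rS\times\rC\to\rS\cdot\rC$ and the role of the square root of $K^{-1}$ is more explicit than the paper's own treatment, which simply absorbs this into the phrase ``(a lift of) the uniformizing representation.''
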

The representations $\rho=\rho_u*\rho_\rC:\pi_1(X)\to\rG$ all factor through a real form of Hodge type $\rG^\R<\rG$ which is canonically associated to the grading $\bigoplus_{j\in\Z}\fg_j$ (see Proposition \ref{prop hodge real form of Zgrad}). Moreover, the $\rG^\R$-centralizer of any such representation is compact, and hence these representations do not factor through any proper parabolic subgroups $\rP^\R<\rG^\R.$

From the above results, it follows that the equivariant holomorphic map $f_\rho:\widetilde X\to\rG^\R/\rH_0^\R$ associated to such a maximal variation of Hodge structure $(\rho,f_\rho)$ is a totally geodesic embedding which maximizes the holomorphic sectional curvature. In the paper  we discuss how the choice of the Toledo character is related to a metric of minimal holomorphic
sectional curvature $-1$ on $\rG^\R/\rH_0^\R$, providing an alternative proof of Theorem
\ref{theoremA}. Our bounds for the sectional curvature give the bounds found 
in \cite{Qiongling_Li} in the case of  $\rSL_n\C$.

 Now consider a general grading $\fg=\bigoplus_{j\in\Z}\fg_j$ with grading element $\zeta$, i.e., not necessarily coming from an $\fsl_2$-triple. Pick a point $e\in\Omega\subset\fg_1$ in the open $\rG_0$-orbit, and $e$ to an $\fsl_2$-triple $\{f,h,e\}$ with $h\in\fg_0.$ If $(\rG_0,\fg_1)$ is not a JM-regular phvs, then $s=\zeta-\frac{h}{2}$ is nonzero and we define $\hat\rG_0<\rG_0$ to be the $\rG_0$ centralizer of $s$ and $\hat \fg_1=\{x\in\fg_1~|~[s,x]\}=0.$  
 With this set up, $\hat\rG_0$ is reductive and $(\hat\rG_0,\hat\fg_1)$ defines a prehomogeneous vector subspace of $(\rG_0,\fg_1)$ which contains $e\in\hat\Omega\subset\hat\fg_1$ is the open orbit (see \S \ref{sec JM reg phvss}). 
 Moreover, the $\rG_0$-centralizer $\hat\rC$ of the $\fsl_2$-triple $\{f,h,e\}$ satisfies $\hat\rC<\hat\rG_0.$ We call $(\hat\rG_0,\hat\fg_1)$ a maximal JM-regular prehomogeneous vector subspace of $(\rG_0,\fg_1).$

In \S \ref{sec non JM reg max Hodge}, we show that all maximal Hodge bundles of type $(\rG_0,\fg_1)$ reduce to maximal Hodge bundles of type $(\rG_0,\fg_1)$ (see Proposition \ref{prop no stable reduce to max JMreg}). 
We then show that the moduli space $\cM^{\max}(\rG_0,\fg_1)$ of maximal Hodge bundles of type $(\rG_0,\fg_1)$ is isomorphic to the moduli space of polystable $\hat\rC$-bundles and deduce rigidity results for maximal variations of Hodge structure analogous (see Theorems \ref{thm param of nonJM reg max} and \ref{rigidity non JM reg case}). 
This recovers rigidity results for maximal variations of Hodge structure in the Hermitian case. 

\noindent
{\bf Acknowledgements:} We wish to thank Nigel Hitchin and Kang Zuo for very 
useful discussions. We also want to thank the Institut Henri Poincar\'e for 
support under the RIP programme in 2017.  

%%%%%%%%%%%%%%%%%%%%%%%%
\section{Prehomogeneous vector spaces, $\Z$-gradings and the Toledo character }
\label{sec phvs and all that}
%%%%%%%%%%%%%%%%%%%%%%%%
For this section, let $\rG$ be a complex reductive Lie group with Lie algebra $\fg.$ 

\subsection{Prehomogeneous vector spaces}
We collect some basic facts about prehomogeneous vector spaces. Main references are \cite{KimuraIntroPHVS,knappbeyondintro,MortajineRedbook,SatoKimuraPHVS}.

A {\bf prehomogeneous vector space (phvs)} for $\rG$ is a finite dimensional complex vector space $V$ together with a holomorphic representation $\rho:\rG\to\rGL(V)$ such that $V$ has an open $\rG$-orbit. Such an open orbit is necessarily unique and dense. If $V$ is a phvs, let $\Omega$ denote the {\bf open orbit} in $V$ and $S=V\setminus\Omega$ be the {\bf singular set}. 
For $x\in V$, denote the $\rG$-stabilizer of $x$ by $\rG^x.$ A phvs vector space $V$ is called {\bf regular} if $\rG^x$ is reductive for $x\in\Omega$, otherwise it is called {\bf nonregular}.  

We say that  $(\rH,W)$ is a  
{\bf prehomogeneous vector subspace (phvss)} of $(\rG,V)$  if $(\rH,W)$ is a phvs, $\rH\subset \rG$ is a subgroup, $W\subset V$ is a vector
subspace, and the action of $\rH$ is the restriction of the action of $\rG$.

\begin{example}\label{ex phvs M_p,q} (1) The vector space $\C^n$ is a phvs for the standard representation of $\rGL_n\C$. For this example, $\Omega=\C^n\setminus\{0\}$, and it is regular only when $n=1.$

\noindent (2) The vector space $M_{p,q}$ of $p\times q$-matrices is a phvs for the action of $\rS(\rGL_p\C\times\rGL_q\C)$ given by $(A,B)\cdot M= AMB^{-1}.$ Here, $\Omega=\{M\in M_{p,q}~|~ \rk(M)=\min(p,q)\}.$ This example is regular only when $p=q$. 

\noindent (3) The vector space $M_{p,q}$ is also a phvs for the action of $\rGL_p\C\times\rSO_q\C$ given by $(A,B)\cdot M=AMB^{-1}.$ Here $\Omega=\{ M\in M_{p,q}~|~\rk(M\cdot M^T)=\min(p,q)\}$. Also, the vector space $M_{p,q}\oplus M_{q,r}$ is a phvs for the action of $\rS(\rGL_p\C\times\rGL_q\C\times\rGL_r\C)$ given by $(A,B,C)\cdot (M,N)=(AMB^{-1},BNC^{-1}).$ Here $\Omega=\{(M,N)\in M_{p,q}\oplus M_{q,r}~|~\rk(MN)=\min(p,q,r)\}$. 
The first example is regular when $p\leq q$ and the second example is regular when $p=r$ and $p\leq q$.
Note that the inclusions
\[\xymatrix@R=0em{M_{p,q}\ar[r]& M_{p,q}\oplus M_{q,p}&&\rGL_p\C\times\rSO_q\C\ar[r]&\rS(\rGL_p\C\times\rGL_q\C\times\rGL_p\C)\\M\ar@{|->}[r]&(M,-M^T)&&(A,B)\ar@{|->}[r]&(A,B,(A^T)^{-1})}\] 
makes $(\rGL_p\C\times\rSO_q\C,M_{p,q})$ a phvss of $(\rS(\rGL_p\C\times\rGL_q\C\times\rGL_p\C),M_{p,q}\oplus M_{q,p}).$
\end{example}

Let $V$ be a phvs for $\rG$ with representation $\rho.$ A non-constant rational function $F:V\to\C$ is called a {\bf relative invariant} if there exists a character $\chi:\rG\to\C^*$ so that 
\[F(\rho(g)\cdot x)=\chi(g)F(x)~\text{for all $g\in\rG$ and $x\in V$}.\]
Here are some fundamental facts appearing in \cite[\S4]{SatoKimuraPHVS}. 
\begin{proposition}\label{prop: phvs facts}
  Let $V$ be a phvs for $\rG$ with representation $\rho$.
  \begin{enumerate}
     \item Up to a constant, a relative invariant is uniquely determined by its corresponding character. In particular, any relative invariant is a homogeneous function.
     % \item Let $S_i=\{x\in V; f_i(x)=0\}$ for $1\leq i\leq l$ be the codimension 1 irreducible components of $S$. Then the irreducible polynomials $f_1,\cdots, f_l$ are relative invariants and algebraically independent. Any polynomial relative invariant $f$ is uniquely expressed in the form $cf_1^{m_1}\cdots f_l^{m_l}$ for $c\in \C^*$ and $m_i\in \Z$.
     \item $V$ is regular if and only if the singular set $S$ is a hypersurface.
     \item Let $\chi:\rG\to\C^*$ be a character. Then there is a relative invariant for $\chi$ if and only if $\chi$ is trivial on the stabilizers of points in $\Omega,$ i.e., $\chi|_{\rG^x}=1$ for all $x\in\Omega.$
   \end{enumerate} 
\end{proposition}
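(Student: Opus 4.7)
The three assertions share a common engine: because $\rG$ has an open dense orbit $\Omega \subset V$, any $\rG$-invariant rational function on $V$ must be constant (it is constant on $\Omega$ by invariance, and agrees with that constant on a dense set). I would use this fact, which I will call the density principle, to drive the arguments.

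For part (1), the density principle handles uniqueness immediately: if $F_1, F_2$ are two nonzero relative invariants with the same character $\chi$, then $F_1/F_2$ is a $\rG$-invariant rational function, hence a constant. To deduce homogeneity, I would first reduce to the polynomial case: writing a rational relative invariant $F$ in lowest terms as $F = P/Q$, the identity $F(\rho(g)x) = \chi(g) F(x)$ together with coprimality of $P,Q$ forces $P(\rho(g)\cdot) = \psi(g) P$ and $Q(\rho(g)\cdot) = \mu(g) Q$ for characters $\psi, \mu$ with $\psi = \chi\mu$, so that $P$ and $Q$ are each polynomial relative invariants. Decomposing such a $P$ into its graded pieces $P = \sum_d P_d$ and matching degrees in $P(\rho(g)x) = \psi(g)P(x)$ shows each nonzero $P_d$ is a relative invariant for the same character $\psi$; by the uniqueness just proved they are all mutually proportional, which is impossible for pieces of different degree unless only one survives. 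Hence $P$ (and similarly $Q$) is homogeneous, and so is $F = P/Q$.

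For part (3), the necessity is a direct check: choose $x_0 \in \Omega$ with $F(x_0) \neq 0$ (possible since $F$ is rational and $\Omega$ is open dense), and for $g \in \rG^{x_0}$ the transformation law forces $\chi(g) = 1$. For sufficiency, the hypothesis $\chi|_{\rG^{x_0}} = 1$ means that the map $\rho(g)x_0 \mapsto \chi(g)$ is well defined on $\Omega \cong \rG/\rG^{x_0}$, giving a regular function $F$ on the open dense subvariety $\Omega$ of the irreducible affine variety $V$. Such a regular function automatically extends to a rational function on $V$, and by construction it satisfies $F(\rho(g)x) = \chi(g) F(x)$.

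Part (2) will be the main obstacle, since neither direction is purely formal. My plan is to appeal to Matsushima's theorem: for a reductive group acting algebraically on an affine variety, an orbit is itself affine if and only if the stabilizer is reductive. Thus $V$ is regular if and only if $\Omega \cong \rG/\rG^x$ is an affine variety. The remaining ingredient is the classical fact that an open subvariety $U$ of a smooth affine (or more generally factorial) variety $V$ is affine if and only if its complement $V \setminus U$ is pure of codimension one, i.e. a hypersurface. Combining these gives the equivalence: regularity $\Leftrightarrow$ $\Omega$ affine $\Leftrightarrow$ $S = V \setminus \Omega$ is a hypersurface. The delicate step is the codimension-one criterion for affineness of $\Omega$; in one direction, if $S$ is a hypersurface $\{P = 0\}$, then $\Omega$ is the principal open set $V_P$, manifestly affine, while in the other, if $\Omega$ is affine and $S$ had a component of codimension $\geq 2$, any regular function on $\Omega$ would extend to $V$, forcing $\Omega = V$ and contradicting the assumption that $S$ contains that component. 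This extension argument via Hartogs-type reasoning on the smooth affine variety $V$ is where I expect the real work to sit.
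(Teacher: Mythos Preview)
The paper does not give a proof of this proposition at all; it simply records these as ``fundamental facts appearing in \cite[\S4]{SatoKimuraPHVS}.'' Your proposal therefore goes well beyond what the paper does, and the arguments you sketch for parts (1) and (3) are the standard ones found in Sato--Kimura and Kimura's book. Those parts are fine.

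Your argument for part (2) via Matsushima's theorem is a correct strategy, but the Hartogs step as you wrote it has a wrinkle. You say that if $\Omega$ is affine and $S$ has a component of codimension $\geq 2$, then every regular function on $\Omega$ extends to $V$. That is false when $S$ has components of mixed codimension: if $S$ contains a hypersurface $\{P=0\}$ as well as a higher-codimension piece, then $1/P$ is regular on $\Omega$ and does not extend. The fix is to first strip off the codimension-one part. Let $S_1$ be the union of the codimension-one components of $S$, say $S_1=\{P=0\}$ (using that $V$ is factorial), and set $U=V\setminus S_1=D(P)$, a smooth affine open. Then $\Omega\subset U$ has complement $S\cap U$ of codimension $\geq 2$ in $U$, so Hartogs gives $\cO(\Omega)=\cO(U)$; since both $\Omega$ and $U$ are affine this forces $\Omega=U$, contradicting the existence of a higher-codimension component. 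With this adjustment your proof of (2) goes through.
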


% \begin{definition}\label{def chi-rank}
% Let $f$ be a polynomial relative invariant of degree $r$ for a phvs $V$ with associated character $\chi.$  Polarizing $f$, we get an $r$-linear map $q:V^r\to\C$ such that $q(x,\cdots,x)=f(x)$.  Define $\rk_\chi(x)$, the $\chi$-rank of a point $x\in V$, as the maximal integer $r'$ so that the $(r-r')$ form $q(x,\cdots,x,-,\cdots, -)$ is not identically zero. 
% \end{definition}
% \marginpar{it doesn't seem that we need the definition of rank= deg of semi invariant for anything}
% It is clear that if $x$ is an element of the open orbit of $V$, $\rk_\chi(x)=\deg f=r$. We define  {\bf rank of $(G,V)$ with respect to $\chi$}
% \[\rk_\chi(\rG,V)=r=\deg f.\]
\begin{example}
  The regular phvs $M_{p,p}$ from Example \ref{ex phvs M_p,q} part (2) has a  relative invariant $F:M_{p,p}\to\C$ given by $F(M)=\det(M).$ The associated character $\chi:\rG\to\C^*$ is given by $\chi(A,B)=\det(A)\det(B)^{-1}$ since 
  \[F((A,B)\cdot M)=\det(AMB^{-1})=\chi(A,B)F(M).\]

  The regular phvs $M_{p,q}\oplus M_{q,p}$ with $p\leq q$ from Example \ref{ex phvs M_p,q} part (3) has a relative-invariant given by $F(M,N)=\det(MN)$. The associated character is $\chi(A,B,C)=\det(A)\det(C^{-1})$ since 
  \[F((A,B,C)\cdot(M,N))=\det(AMB^{-1}BNC^{-1})=\chi(A,B,C)F(M,N).\] 
  This relative invariant also defines a relative invariant for the the $\rGL_p\C\times \rSO_q\C$ phvss given by $(M,N)=(M,-M^T)$. For $(A,B)\in\rGL_p\C\times \rSO_q\C$, the associated character is $\chi(A,B)=\det(AA^T)=\det(A)^2.$
\end{example}

%%%%%%%%%
\subsection{Prehomogeneous vector spaces associated to $\Z$-gradings}\label{sec phvs for Zgrad}
A $\Z$-grading of a semisimple Lie algebra $\fg$ is a decomposition 
\[\fg=\bigoplus_{j\in\Z}\fg_j\ \ \ \ \  \text{such that }\ \ \ \ \ [\fg_i,\fg_j]\subset\fg_{i+j}.\] The subalgebra $\fp=\bigoplus_{j\geq 0}\fg_j$ is a parabolic subalgebra with Levi subalgebra $\fg_0\subset\fp.$ There is an element $\zeta\in\fg_0$ such that $\fg_j=\{X\in\fg~|~[\zeta,x]=jx\}$; the element $\zeta$ is called the {\bf grading element} of the $\Z$-grading.

Given a $\Z$-grading $\fg=\bigoplus_{j\in\Z}\fg_j$, let $\rG_0<\rG$ be the centralizer of $\zeta$; $\rG_0$ acts on each factor $\fg_j$. The relation between $\Z$-gradings and prehomogeneous vector spaces is given by the following theorem of Vinberg (see \cite[Theorem 10.19]{knappbeyondintro}).

\begin{proposition}\label{prop each gj phvs}For each $j\neq 0,$ $\fg_j$ is a prehomogeneous vector space for $\rG_0.$
 \end{proposition}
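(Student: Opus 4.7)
The plan is to establish the stronger statement that $\rG_0$ acts on $\fg_j$ with only finitely many orbits, which implies prehomogeneity since $\fg_j$ is an irreducible algebraic variety.

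The first step would be to observe that every $x \in \fg_j$ is nilpotent in $\fg$ for $j \neq 0$: since $\ad(x)$ shifts the grading by $j$ and the grading has bounded support, $\ad(x)^N = 0$ for $N$ large. Thus $\fg_j$ lies in the nilpotent cone $\cN \subset \fg$, which by Dynkin--Kostant has only finitely many $\rG$-orbits $\cO_1, \dots, \cO_r$. It therefore suffices to show each intersection $\cO_i \cap \fg_j$ is a finite union of $\rG_0$-orbits.

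The key ingredient for this reduction is a graded Jacobson--Morozov theorem due to Vinberg: any nonzero nilpotent $x \in \fg_j$ admits an $\fsl_2$-triple $\{f, h, x\}$ with $h \in \fg_0$ and $f \in \fg_{-j}$, and any two such graded triples through $x$ are $\rG_0^x$-conjugate. Existence would follow by starting with an arbitrary $\fsl_2$-triple through $x$, decomposing its neutral element along the grading, and conjugating by suitable $\exp(\ad(\cdot))$ with elements in $\bigoplus_{k \neq 0} \fg_k$ to push the neutral element into $\fg_0$. Combined with the finiteness of $\rG$-conjugacy classes of $\fsl_2$-triples (again Dynkin--Kostant), this would yield that each $\cO_i \cap \fg_j$ consists of finitely many $\rG_0$-orbits. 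Summing over $i$, $\fg_j$ is a finite union of locally closed $\rG_0$-orbits, and irreducibility of $\fg_j$ forces one of them to be open and dense.

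The main obstacle will be proving the existence portion of the graded Jacobson--Morozov theorem: the termination of the conjugation procedure that places $h$ into $\fg_0$ and $f$ into $\fg_{-j}$. A standard strategy proceeds by induction on the grading, using $\fsl_2$-representation theory of the triple together with the fact that $\bigoplus_{k \geq 0} \fg_k$ is a parabolic subalgebra with nilradical $\bigoplus_{k > 0} \fg_k$, whose exponential acts transitively enough to absorb the non-$\fg_0$ components of $h$ one grade at a time.
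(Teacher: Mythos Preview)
The paper does not prove this proposition; it merely attributes it to Vinberg and cites \cite[Theorem 10.19]{knappbeyondintro}. So there is no proof in the paper to compare against, and your proposal supplies what the paper omits.

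Your outline is exactly Vinberg's argument: finiteness of $\rG_0$-orbits on $\fg_j$ via a graded Jacobson--Morozov theorem, then irreducibility to extract an open orbit. One step deserves more care, however. You write that finiteness of $\rG$-conjugacy classes of $\fsl_2$-triples, together with graded Jacobson--Morozov, yields that each $\cO_i \cap \fg_j$ is a finite union of $\rG_0$-orbits. But a single $\rG$-class of triples could in principle split into infinitely many $\rG_0$-classes of \emph{graded} triples, so this implication is not immediate. What Vinberg actually shows is: (a) the characteristic $h\in\fg_0$ of a graded triple falls into only finitely many $\rG_0$-conjugacy classes (conjugate $h$ into a Cartan $\ft\subset\fg_0$, which is also a Cartan of $\fg$; then $h$ lies in the finite Weyl-group orbit determined by its $\rG$-class), and (b) the characteristic $h$ determines the $\rG_0$-orbit of $e$ (if $h=h'$ then $e,e'$ both lie in the open $Z_{\rG_0}(h)$-orbit on $\fg_j\cap\ker(\ad h-2)$, by $\fsl_2$-representation theory). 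Together (a) and (b) give the finiteness you need. Your sketch gestures at the right machinery but does not isolate step (b), which is the genuine content beyond the ungraded Dynkin--Kostant finiteness.

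The existence portion of graded Jacobson--Morozov that you flag as the main obstacle is indeed the most technical part, and your inductive strategy using the parabolic structure is the standard one.
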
 
Prehomogeneous vector spaces arising from $\Z$-gradings of $\fg$ are said to be of {\bf parabolic type}. From now on, we will consider only phvs's of parabolic type. 
\begin{example}\label{ex parabolic type}
  All of the phvs's from Example \ref{ex phvs M_p,q} are of parabolic type.
  \begin{itemize}
     \item For $(\rS(\rGL_p\C\times\rGL_q\C),M_{p,q})$, $\rG=\rSL_{p+q}\C$ and the parabolic is the stabilizer a $p$-plane in $\C^{p+q}.$
     \item For $(\rGL_p\C\times\rSO_q\C,M_{p,q})$, $\rG=\rSO_{2p+q}\C$ and the parabolic is the stabilizer of an isotropic $p$-plane in $\C^{2p+q}$.
     \item For $(\rS(\rGL_p\C\times\rGL_q\C\times\rGL_r\C),M_{p,q}\oplus M_{q,r})$, $\rG=\rSL_{p+q+r}\C$ and the parabolic stabilizer of a flag $\C^p\subset\C^{p+q}\subset \C^{p+q+r}.$
   \end{itemize}  
\end{example}

Consider a nonzero nilpotent element $e\in\fg.$ By the Jacobson--Morozov theorem, $e$ can be completed to an $\fsl_2$-triple $\{f,h,e\}\subset \fg$. That is, a triple satisfying the bracket relations of $\fsl_2\C:$
\[\xymatrix{[h,e]=2e~,&[h,f]=-2f&\text{and}&[e,f]=h}.\] 
Moreover, given $\{h,e\}$ so that $[h,e]=2e$ and $h\in\ad_e(\fg)$, there is a unique $f\in\fg$ so that $\{f,h,e\}$ is an $\fsl_2$-triple.

Given an $\fsl_2$-triple $\{f,h,e\}$, the semisimple element $h$ acts on $\fg$ with integral weights and thus defines a $\Z$-grading $\fg=\bigoplus_{j\in\Z}\fg_j$, where $\fg_j=\{x\in\fg~|~\ad_h(x)=jx\}.$ Note that $e\in\fg_2.$ We have the following result of Kostant and Malcev (see \cite[Theorem 10.10]{knappbeyondintro}).

\begin{proposition}
  Let $\{f,h,e\}\subset\fg$ be an $\fsl_2$-triple with associated $\Z$-grading $\fg=\bigoplus_{j\in\Z}\fg_j$, and let $\rG_0<\rG$ be the associated analytic subgroup with Lie algebra $\fg_0.$ Then $e$ is in the open orbit $\Omega$ of the phvs $(G_0,\fg_2)$. 
\end{proposition}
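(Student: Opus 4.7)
The plan is to show that the $\rG_0$-orbit of $e$ is open in $\fg_2$; since by Proposition \ref{prop each gj phvs} (Vinberg) $\fg_2$ is a prehomogeneous vector space for $\rG_0$, and the open orbit is unique, this will force $\rG_0 \cdot e = \Omega$.

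The orbit $\rG_0 \cdot e$ is a locally closed smooth subvariety of $\fg_2$, and its tangent space at $e$ is the image of the differential of the orbit map $\rG_0 \to \fg_2$, $g \mapsto \Ad(g)e$, namely $[\fg_0, e] = \ad_e(\fg_0) \subset \fg_2$. So the first step is to reduce the statement to proving that the restricted map
\[
\ad_e \colon \fg_0 \longrightarrow \fg_2
\]
is surjective. Openness of the orbit then follows by the implicit function theorem (or by comparing dimensions of smooth algebraic varieties).

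The second step is to prove this surjectivity using the representation theory of the $\fsl_2$-subalgebra $\mathfrak{s} = \langle f, h, e \rangle$ acting on $\fg$ via the adjoint representation. Decompose $\fg$ under $\mathfrak{s}$ as a direct sum of irreducible $\fsl_2\C$-modules $V_n$ of highest weight $n \ge 0$. Because the grading is defined by $h$-weights, $\fg_j \cap V_n$ is the $h$-weight-$j$ subspace of $V_n$, which is one-dimensional when $|j| \le n$ and $j \equiv n \pmod 2$, and zero otherwise. The key fact from $\fsl_2$-theory is that $\ad_e$ sends the weight-$k$ subspace of $V_n$ to the weight-$(k+2)$ subspace and is an isomorphism there except when $k = n$ (the highest weight), where it vanishes. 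Applied componentwise from weight $0$ to weight $2$ this gives: for each $V_n$ with $n$ odd both $V_n \cap \fg_0$ and $V_n \cap \fg_2$ vanish, for $V_0$ both sides contribute nothing to the target, and for each $V_n$ with $n \ge 2$ even the map $\ad_e \colon V_n \cap \fg_0 \to V_n \cap \fg_2$ is a nonzero linear map between one-dimensional spaces, hence an isomorphism. Summing over components shows that $\ad_e \colon \fg_0 \to \fg_2$ is surjective.

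The only mild subtlety is the passage from surjectivity of $\ad_e$ to $e \in \Omega$: once we know $\rG_0 \cdot e$ is an open subset of $\fg_2$, it must intersect and hence coincide with the unique open $\rG_0$-orbit $\Omega$, because two disjoint nonempty open subsets of the irreducible variety $\fg_2$ cannot both exist. I expect the main conceptual point (rather than a real obstacle) to be the bookkeeping in the $\fsl_2$-weight decomposition; all other steps are standard Lie-theoretic observations.
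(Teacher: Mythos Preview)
Your proof is correct. The paper does not supply its own argument for this proposition; it merely attributes the result to Kostant and Mal'cev with a reference to Knapp's textbook. Your argument---reducing to surjectivity of $\ad_e\colon \fg_0 \to \fg_2$ and establishing that surjectivity via the $\fsl_2$-module decomposition of $\fg$---is exactly the standard proof one finds in the cited literature, so in that sense you have reconstructed what the paper defers to its reference.

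One small remark: you invoke Vinberg's theorem (Proposition~\ref{prop each gj phvs}) to know in advance that $\fg_2$ is a phvs, but your argument actually proves this independently: once $\ad_e\colon\fg_0\to\fg_2$ is surjective, the orbit $\rG_0\cdot e$ is open, which already exhibits $\fg_2$ as a phvs with $e$ in the (necessarily unique) open orbit. So the appeal to Vinberg is logically redundant here, though harmless.
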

\begin{corollary}\label{cor: JM reg}
  The phvs $(\rG_0,\fg_2)$ arising from an $\fsl_2$-triple $\{f,h,e\}$ is regular.
\end{corollary}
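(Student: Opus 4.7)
The plan is to verify regularity directly from the definition: by the previous proposition, $e$ lies in the open $\rG_0$-orbit $\Omega\subset\fg_2$, so it suffices to show that the isotropy group $\rG_0^e$ is reductive. I would aim for the sharper identification $\rG_0^e = \rZ_\rG(\{f,h,e\})$, the centralizer of the entire $\fsl_2$-triple in $\rG$. Reductivity is then automatic, since the centralizer of a semisimple Lie subalgebra in a reductive group is reductive.

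First I would establish the Lie algebra equality $\fg_0\cap\fg^e=\fz(\{f,h,e\})$. The inclusion $\supseteq$ is tautological. For $\subseteq$, any $x\in\fg_0\cap\fg^e$ is a weight-zero vector for $\ad_h$ that is annihilated by $\ad_e$. Decomposing $\fg$ into irreducible $\fsl_2$-submodules under the action of $\{f,h,e\}$, the vector $x$ must be a highest weight vector of weight $0$, hence generates a trivial $\fsl_2$-submodule. In particular $[f,x]=0$, giving $x\in\fz(\{f,h,e\})$.

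To upgrade this to the group I would invoke the Kostant decomposition of the nilpotent centralizer, $\rZ_\rG(e)=\rZ_\rG(\{f,h,e\})\cdot U_e$, where $U_e$ is a connected unipotent group with Lie algebra concentrated in strictly positive $\ad_h$-weights. Intersecting with $\rG_0=\rZ_\rG(h)$: the Levi factor $\rZ_\rG(\{f,h,e\})$ is already contained in $\rG_0$, while $\rG_0\cap U_e$ is trivial because elements of $\rZ_\rG(h)$ preserve the $\ad_h$-weight decomposition and $U_e$ has Lie algebra supported in strictly positive weights. This yields $\rG_0^e=\rZ_\rG(\{f,h,e\})$.

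The main point to watch is precisely this group-level refinement of the Lie algebra equality: at the Lie algebra level the argument is pure $\fsl_2$-representation theory, but on the group one must exclude extra components of $\rG_0^e$ outside the Levi factor, which the Kostant decomposition handles cleanly. The remaining ingredient, reductivity of the centralizer of a semisimple Lie subalgebra in a reductive group, is standard.
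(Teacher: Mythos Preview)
Your argument is correct and reaches the same identification $\rG_0^e=\rZ_\rG(\{f,h,e\})$ as the paper, but by a longer route. The paper bypasses both the $\fsl_2$-representation theory and the Kostant decomposition with a single application of the uniqueness clause of the Jacobson--Morozov theorem, applied directly at the group level: any $g\in\rG_0^e$ fixes both $h$ and $e$ under $\Ad$, so $\{\Ad_g f,\,h,\,e\}$ is again an $\fsl_2$-triple completing the pair $(h,e)$, and hence $\Ad_g f=f$ by uniqueness. This gives $\rG_0^e\subseteq\rZ_\rG(\{f,h,e\})$ in one line, and the reverse inclusion is immediate. Your concern about lifting from the Lie algebra to the group is therefore unnecessary here---the uniqueness statement already acts on group elements. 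Your approach does work, though your justification that $\rG_0\cap U_e$ is trivial is a bit loose as stated; the cleanest way to see it is that $U_e$ lies inside the unipotent radical of the Jacobson--Morozov parabolic, which meets its Levi $\rZ_\rG(h)\supseteq\rG_0$ trivially. The Kostant decomposition yields more structural information about $\rZ_\rG(e)$, but for this corollary the paper's shortcut is considerably more economical.
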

\begin{proof}
  Since $e\in\Omega\subset\fg_2$, we need to show that the stabilizer $\rG_0^e$ is reductive. The group $\rG_0$ centralizes $h,$ so the stabilizer $\rG_0^e$ of $e$ stabilizes both $h$ and $e$. By the uniqueness part of the Jacobson--Morozov theorem, $\rG_0^e$ stabilizes the $\fsl_2$-triple. Since the centralizer of a reductive subalgebra is reductive, we conclude that $\rG_0^e$ is reductive.
\end{proof}
Let $B:\fg\times\fg\to\C$ denote the Killing form of $\fg.$ Given an $\fsl_2$-triple, $\{f,h,e\}\subset\fg$ with associated $\Z$-grading $\fg=\bigoplus_{j\in\Z}\fg_j$, define the character $\chi_{h}:\fg_0\to\C$ by
\begin{equation}
 \label{eq chi h} \chi_{h}(x)=\frac{1}{2}B(h,x).
\end{equation}

\begin{proposition}\label{prop: relative invariant for sl2 triple}
There is a positive integer $q$ so that $q\chi_{h}$ lifts to a character
  \[\chi_{h,q}:\rG_0\to \C^*\]
which has a polynomial relative invariant of degree $qB(\frac{h}{2},\frac{h}{2})$. 
\end{proposition}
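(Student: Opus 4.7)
The proof naturally splits into four tasks: (a) lifting $\chi_h$ to a character of $\rG_0$, (b) verifying triviality on the stabiliser $\rG_0^e$ to invoke Proposition~\ref{prop: phvs facts}(3), (c) upgrading the resulting \emph{rational} relative invariant to a \emph{polynomial} one, and (d) computing the degree. I expect (c) to be the main obstacle.

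For (a), I first check that $\chi_h$ is a well defined Lie-algebra character of $\fg_0$: since $h\in\fz(\fg_0)$, $\ad$-invariance of $B$ yields $B(h,[y,z])=B([h,y],z)=0$ for $y,z\in\fg_0$, so $\chi_h$ annihilates $[\fg_0,\fg_0]$. Because $h$ has integer eigenvalues on $\fg$, the functional $\chi_h$ lies in the rational span of the character lattice of $\rG_0$, and some positive multiple integrates to a character $\chi_{h,q}:\rG_0\to\C^*$ (after possibly further multiplying by $|\pi_0(\rG_0)|$). For (b), I invoke the fact, implicit in the proof of Corollary~\ref{cor: JM reg} and equivalent to the basic $\fsl_2$-representation-theoretic statement that $\fg^e\subset\bigoplus_{j\geq 0}\fg_j$, that $\fg_0^e=\fg^{\{f,h,e\}}$. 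In particular $[f,x]=0$ for $x\in\fg_0^e$, so invariance of $B$ gives
\[ \chi_h(x)\;=\;\tfrac12 B([e,f],x)\;=\;\tfrac12 B(e,[f,x])\;=\;0. \]
Thus $d\chi_{h,q}$ kills $\fg_0^e$; enlarging $q$ by the order of the finite component group of the reductive centraliser $\rG_0^e$, I arrange $\chi_{h,q}|_{\rG_0^e}=1$, and Proposition~\ref{prop: phvs facts}(3) produces a rational relative invariant $F$ with character $\chi_{h,q}$.

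For (c), writing $F=F_1/F_2$ in coprime form and using the transformation law together with unique factorisation forces $F_1,F_2$ to be themselves polynomial relative invariants, with characters $\chi_1,\chi_2$ satisfying $\chi_{h,q}=\chi_1-\chi_2$. Regularity of $(\rG_0,\fg_2)$ (Corollary~\ref{cor: JM reg}) and Proposition~\ref{prop: phvs facts}(2) tell us the singular set is a hypersurface; by Sato--Kimura, its irreducible components have defining equations $P_1,\dots,P_k$, each a polynomial relative invariant, and every polynomial relative invariant is a monomial in these. Upgrading $F$ to polynomial thus reduces to expressing $q\chi_h$ as a \emph{non-negative} integer combination of the $\chi_{P_i}$. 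My approach is to bypass this abstract monoid question by constructing $F$ directly from the $\fsl_2$-structure --- for example, by restricting suitable products of fundamental $\rG$-invariants of $\fg$ to the affine translate $f+\fg_{\geq 0}\subset\fg$ and projecting to $\fg_2$, or by taking the determinant of an appropriate power $\ad_x^j:\fg_{-j}\to\fg_j$ at the generic point --- and then verifying directly that the polynomial so produced transforms under $\rG_0$ by $q\chi_h$ for a suitable choice of $q$.

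Finally, for (d), since $[h,x]=2x$ on $\fg_2$, $\exp(th)$ acts on $\fg_2$ as the scalar $e^{2t}$, so comparing $F(\exp(th)\cdot x)=e^{2td}F(x)$ (valid for any homogeneous $F$ of degree $d$) with $\chi_{h,q}(\exp(th))F(x)=\exp(\tfrac{q}{2}tB(h,h))F(x)$ yields $d=\tfrac{q}{4}B(h,h)=qB(\tfrac{h}{2},\tfrac{h}{2})$, as required.
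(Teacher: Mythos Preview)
Your steps (a), (b) and (d) are exactly the paper's proof: the paper shows $\chi_h$ vanishes on $\fg_0^e$ via $B(h,x)=B([e,f],x)=B(f,[e,x])=0$ (you use $[f,x]=0$ instead of $[e,x]=0$, which is equivalent after noting $\fg_0^e=\fg^{\{f,h,e\}}$), then multiplies by $q$ to lift and to kill the component group of $\rG_0^e$, invokes Proposition~\ref{prop: phvs facts} to get a relative invariant, and computes the degree by the same homogeneity trick with $\exp(th/2)$.

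Where you differ is step~(c), and here you are being \emph{more careful than the paper}. The paper's proof simply cites ``part~(4) of Proposition~\ref{prop: phvs facts}'' --- a part that does not exist as stated; part~(3) only produces a \emph{rational} relative invariant, and the paper never explains why it is polynomial. So the gap you flag is real, and the paper's own proof does not close it. Your instinct to construct $F$ directly is the right move. Of your two suggestions, the second one works cleanly: for each $j>0$ the map $\ad_x^j:\fg_{-j}\to\fg_j$ is an isomorphism at $x=e$ by $\fsl_2$-theory, so $D_j(x)=\det(\ad_x^j:\fg_{-j}\to\fg_j)$ is a nonzero polynomial on $\fg_2$; since $\fg_{-j}\cong\fg_j^*$ as $\rG_0$-modules via $B$, the character of $D_j$ is $g\mapsto\det(\Ad_g|_{\fg_j})^2$, with differential $2\tr(\ad_y|_{\fg_j})$. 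Then $F=\prod_{j>0}D_j^{\,j}$ has character with differential $\sum_{j>0}2j\,\tr(\ad_y|_{\fg_j})=2\chi_h(y)$ (using $B(h,y)=\sum_j j\,\tr(\ad_y|_{\fg_j})$), so $F$ is a polynomial relative invariant for $q=2$. You should write this computation out rather than leave it as a sketch; your first suggestion (restricting $\rG$-invariants to $f+\fg_{\geq0}$ and ``projecting to $\fg_2$'') is not clearly formulated and should be dropped.
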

\begin{proof}
 Let $\fg_0^e\subset\fg_0$ be the Lie algebra of the $\rG_0$-stabilizer of $e\in\fg_2.$  For $x\in\fg^e_0$, we have 
 \[B(h,x)=B([e,f],x)=B(f,[e,x])=0.\] 
Thus, $\chi_h(\fg_0^e)=0$, and there is a positive integer $q$ so that $q\chi_h$ lifts to a character which is trivial on the identity component of $\rG_0^e.$ Since $\rG_0^e$ has a finite number of components, we can choose $q$ so that $q\chi_h$ lifts to a character $\chi_{h,q}:\rG_0\to\C^*$ whose restriction to $\rG_0^e$ is trivial. Hence, by part (4) of Proposition \ref{prop: phvs facts}, the $\chi_{h,q}$ has a relative invariant $F:\rG_0\to\rC^*$ such that $F(e)\neq0$. 
For the degree, we have 
\[F(\exp(t)e)=F(\exp(\frac{h}{2}t)\cdot e)=\chi_{h,q}(\exp(\frac{h}{2}t))F(e)=\exp(tqB(\frac{h}{2},\frac{h}{2}))F(e).\]
By Proposition \ref{prop: phvs facts} $F$ is homogeneous, so $\deg(F)=kB(\frac{h}{2},\frac{h}{2}).$
\end{proof}

\begin{remark}\label{remark: not all reg are JM reg}
  Not every regular phvs of the form $(\rG_0,\fg_2)$ arises from an $\fsl_2$-triple. For examples in $\rE_6$, $\rE_7$ and $\rE_8$ see \cite[Remark 4.18]{RubenthalerNonJMRegPHVS}. In fact, the $\rB_3$ example in \cite[Table 1]{RubenthalerNonJMRegPHVS} gives a very simple example of a regular phvs which does not arise from an $\fsl_2$-triple. For such regular phvs's the grading element $\zeta$ does not have a relative invariant.
\end{remark}

\subsection{Canonical $\Z$-gradings associated to parabolics}\label{section: canonical Z grade}
In this section we fix some normalizations for the $\Z$-gradings we will consider. 

Up to conjugation, all $\Z$-gradings of $\fg$ arise from labeling the nodes of the Dynkin diagram of $\fg$ with non-negative integers \cite[Chapter 3.5]{LieIII}. This works as follows.
Let $\ft\subset\fg$ be a Cartan subalgebra and $\Delta=\Delta(\fg,\ft)$ be the corresponding set of roots. Pick a set of simple roots $\Pi\subset\Delta$ and let $\Delta^+\subset\Delta$ be the the set of positive roots. Every root $\alpha\in\Delta^+$ can be written as $\alpha=\sum_{\alpha_i\in\Pi}n_i\alpha_i$, where all $n_i$ are nonnegative  integers. 
For each $\alpha_i\in\Pi$ choose nonnegative integers $p_i$, i.e. label the nodes of the Dynkin diagram of $\Pi$ with nonnegative integers $p_i$.  This choice defines a $\Z$-grading $\fg=\bigoplus_{j\in\Z}\fg_j$, where $\fg_j$ is the direct sum of all root spaces $\fg_\alpha$ such that $\alpha=\sum_{\alpha_i\in\Pi}n_i\alpha_i$ and $j=\sum_{i}n_ip_i.$ 

In the above construction, the associated parabolic $\fp=\bigoplus_{j\geq0}\fg_j$ and the Levi subalgebra $\fg_0$ depend only on  the labels $p_i\mod 2$. Namely, up to conjugation, parabolic subalgebras $\fp$ are determined by subsets $\Theta\subset\Pi$, where
\[\fp_\Theta=\ft\oplus\bigoplus_{\alpha\in span(\Theta)\cap \Delta^+}\fg_{-\alpha}\oplus \bigoplus_{\alpha\in\Delta^+}\fg_\alpha.\]
We define the {\bf canonical $\Z$-grading of} $\fp_\Theta$ by labeling the $\alpha_i$-node of the Dynkin diagram with $0$ if $\alpha_i\in\Theta$ and $1$ if $\alpha_i\in\Pi\setminus\Theta.$ That is $\fg_j$ is given by 
\[\fg_j=\bigoplus\fg_\alpha \ , \ \ \ \text{where $\alpha=\sum_{\alpha_i\in\Pi}n_i\alpha_i$ and $j=\sum_{\alpha_i\in\Pi\setminus\Theta}n_i$}.\]

\begin{remark}
 We will often consider phvs's of parabolic type and of the form $(\rG_0,\fg_1)$ where $\fg_1$ is the weight 1 piece of the canonical grading of a parabolic subalgebra of $\fg.$ This is not a major restriction since, given a general phvs $(\rG_0,\fg_k)$ of parabolic type, the subalgebra consisting of graded pieces $\fg_j$ with $j=0\mod k$ is reductive and $\fg_k$ is the weight 1 piece of the canonical grading of a parabolic in this subalgebra.
\end{remark}

An $\fsl_2$-triple $\{f,h,e\}\subset\fg$ is called {\bf even} if $\ad_h$ has only even eigenvalues. Parabolic subalgebras arising from even $\fsl_2$-triples are called {\bf even Jacobson--Morozov parabolics}. For such parabolics, the canonical grading is given by $\ad_{\frac{h}{2}}$; the phvs $(\rG_0,\fg_1)$ is regular by Corollary \ref{cor: JM reg} and, by Proposition \ref{prop: relative invariant for sl2 triple}, there is a polynomial invariant associated to the character $\chi_h$ from \eqref{eq chi h}. 
As a result we will call a phvs $(\rG_0,\fg_1)$ which arises from an even $\fsl_2$-triple an {\bf Jacobson--Morozov regular phvs (JM-regular phvs)}.

\begin{example}\label{ex principal}
  When $\Theta=\emptyset,$ $\rG_0$ is Cartan subgroup and $\fg_1$ is the direct sum of simple root spaces. Such a phvs $(\rG_0,\fg_1)$ is always JM-regular and arises from a principal $\fsl_2$-triples. Here $\Omega\subset\fg$ consists of vectors with nonzero projection onto each simple root space and the stabilizer of such a point is the center of $\rG.$
\end{example}

\begin{example}\label{ex compact dual of herm}
Let $\eta=\sum_{\alpha_i\in\Pi}n_i\alpha_i$ be the longest root of $\Delta^+.$ For $\fg$ not of type $\rE_8,~\rF_4,~\rG_2,$ there is at least one simple root  $\alpha_i$ with $n_i=1$. For such a root set $\Theta=\Pi\setminus\{\alpha_i\}.$ In this case, the canonical $\Z$-grading of the parabolic $\fp_\Theta$ is given by
  \[\fg=\fg_{-1}\oplus\fg_0\oplus\fg_1.\]
For these examples, $(\rG_0,\fg_1)$ is regular if and only if it is JM-regular. The associated flag variety $\rG/\rP_\Theta$ is the compact dual of a Hermitian symmetric space and $(\rG_0,\fg_1)$ is regular if and only if the associated Hermitian symmetric space is of tube type. An example of this is given in Example \ref{ex phvs M_p,q}, in which case the symmetric space of $\rSU(p,q)$ is the relevant Hermitian symmetric space; it is of tube type only when $q=p$.
\end{example}

\begin{remark}\label{rem ex M_pq JM-reg}
In Example \ref{ex compact dual of herm}, the space $\fg_1$ is an irreducible $\rG_0$-representation. In general, if $(\rG_0,\fg_1)$ is a phvs such that $\rG_0$ acts irreducibly on $\fg_1,$ then $(\rG_0,\fg_1)$ is regular if and only if it is JM-regular. For example, this implies that the phvs $(\rGL_p\C\times\rSO_q\C, M_{p,q})$ from Examples \ref{ex phvs M_p,q} and \ref{ex parabolic type} is JM-regular whenever $p\leq q.$ 
The regular phvs $(\rS(\rGL_p\C\times\rGL_q\C\times\rGL_p\C),M_{p,q}\oplus M_{q,p})$ from Examples \ref{ex phvs M_p,q} and \ref{ex parabolic type} does not come from a maximal parabolic of $\rSL_{2p+q}\C$ but it is still a JM-regular phvs when $p\leq q$. 
\end{remark}

\begin{example}\label{ex reg not jm reg}
  The simplest example of a phvs $(\rG_0,\fg_1)$ which is regular but not JM-regular occurs in $\rSO_7\C$ with $\Theta=\{\alpha_2\}$, i.e., with labeled Dynkin diagram
\begin{center}
    \begin{tikzpicture}[scale=.7]        
    \draw[thick] (0 cm,0) circle (.3cm);
    \draw[thick] (2 cm,0) circle (.3cm);
    \draw[thick] (4 cm,0) circle (.3cm);
    \draw[thick] (0.3 cm,0) -- +(1.4 cm,0) ;
    \draw[thick] (2.3 cm, .1 cm) -- +(1.4 cm,0);
    \draw[thick] (3.1 cm, 0) -- +(-.3cm, -.3cm);
    \draw[thick] (3.1 cm, 0) -- +(-.3cm, .3cm);
    \draw[thick] (2.3 cm, -.1 cm) -- +(1.4 cm,0);
    \node at (0,0)  {$1$};
    \node at (2,0)  {$0$};
    \node at (4,0)  {$1$};
  \end{tikzpicture}
  \end{center}
Here $(\rG_0,\fg_1)\cong (\rGL_1\C\times\rGL_2\C, \C^2\oplus\C^2)$ for the action $(\lambda,A)\cdot (v,w)=(\lambda v A^{-1},Aw^T).$ A point in the open orbit is given by $v=(1,0)$ and $w=(1,0).$ For this point, the $\rG_0$-stabilizer is given by $\lambda=1$ and $A=\smtrx{1&0\\0&\xi}$ for $\xi\in\C^*.$ Geometrically, this parabolic stabilizes an isotropic flag of the form $\C\subset\C^3\subset \C^7$.
\end{example}

\subsection{Jacobson--Morozov regular prehomogeneous vector subspaces}\label{sec JM reg phvss}

Fix a $\Z$-grading $\fg=\bigoplus_{j\in\Z} \fg_j$ with grading element $\zeta$ and consider a prehomogeneous vector space $(\rG_0,\fg_1)$.  Following the proof \cite[Theorem 3.3.1]{CollMcGovNilpotents} of the Jacobson--Morozov theorem, one can show that any nonzero element $e\in\fg_1$ can be completed to an $\fsl_2$-triple $\{f,h,e\}$ with $f\in\fg_{-1}$ and $h\in\fg_0.$

The semisimple element $h$ defines a new grading $\fg=\bigoplus_{j\in\Z} \tilde\fg_j$ where
\[\tilde\fg_j=\{x\in\fg~|~\ad_h(x)=jx\}.\]
Define $\hat\fg_j=\fg_j\cap\tilde\fg_{2j}$ and the subalgebra $\hat\fg\subset\fg$ given by
\begin{equation}
  \label{eq hat algebra}\hat\fg=\bigoplus_{j\in\Z}\hat\fg_j.
\end{equation}

Note that $h$ and $\zeta$ are both in $\hat\fg_0$ and $e\in\hat\fg_1.$ The difference $s=\zeta-\frac{h}{2}$ is semisimple or zero since $\zeta$ and $\frac{h}{2}$ are semisimple and $[\zeta,\frac{h}{2}]=0$. Thus, $\hat\fg$ is reductive since $\hat\fg=\fg^{s}$ is the centralizer of $s.$ 
The following proposition is immediate.
\begin{proposition}\label{prop max jm reg for e}
  Let $\hat\rG_0<\rG$ be the $\rG_0$-centralizer of $h$, then $(\hat\rG_0,\hat\fg_1)$ is a  phvss of $(\rG_0,\fg_1)$ which is JM-regular and $e$ in the open $\hat\rG_0$-orbit $\hat\Omega\subset\hat\fg_1$. 
\end{proposition}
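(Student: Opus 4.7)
The plan is to verify that $\{f,h,e\}$ is an $\fsl_2$-triple lying inside $\hat\fg$ whose associated canonical $\Z$-grading of $\hat\fg$ coincides with the decomposition $\hat\fg=\bigoplus_j\hat\fg_j$ defined in \eqref{eq hat algebra}. Once this is checked, the statement becomes a direct application, inside the reductive subalgebra $\hat\fg$, of the Kostant--Malcev result invoked just before Corollary \ref{cor: JM reg} together with Corollary \ref{cor: JM reg} itself.

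First I would identify the Lie algebra of $\hat\rG_0$. Since $\hat\rG_0$ is the $\rG_0$-centralizer of $h$, its Lie algebra is $\fg_0\cap\ker\ad_h=\fg_0\cap\tilde\fg_0=\hat\fg_0$, and the inclusion $\hat\rG_0\subset\rG_0$ together with $\hat\fg_1\subset\fg_1$ shows that $(\hat\rG_0,\hat\fg_1)$ is a candidate phvss on which $\hat\rG_0$ acts through the restriction of the $\rG_0$-action. Next I would check that $\{f,h,e\}\subset\hat\fg$: by hypothesis $h\in\fg_0$, $e\in\fg_1$, $f\in\fg_{-1}$, and the $\fsl_2$-relations give $\ad_h(h)=0$, $\ad_h(e)=2e$, $\ad_h(f)=-2f$, so $h\in\hat\fg_0$, $e\in\hat\fg_1$, and $f\in\hat\fg_{-1}$. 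Moreover, from the definition $\hat\fg_j\subset\tilde\fg_{2j}$ we see that $\ad_h$ acts as $2j$ on $\hat\fg_j$, so $h/2$ is the grading element of the decomposition $\hat\fg=\bigoplus_j\hat\fg_j$. Equivalently, $\hat\fg=\bigoplus_j\hat\fg_j$ is precisely the canonical $\Z$-grading of $\hat\fg$ associated to the even $\fsl_2$-triple $\{f,h,e\}$ in the sense of \S\ref{section: canonical Z grade}.

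At this point the conclusion is immediate. Applying the Kostant--Malcev theorem inside the reductive algebra $\hat\fg$ to the even $\fsl_2$-triple $\{f,h,e\}$, the analytic subgroup with Lie algebra $\hat\fg_0$ acts on its $\ad_h$-weight-$2$ space, which by the above is exactly $\hat\fg_1$, with $e$ in the open orbit; since this analytic subgroup is contained in $\hat\rG_0$, the orbit $\hat\rG_0\cdot e\subset\hat\fg_1$ is open as well, giving $e\in\hat\Omega$. JM-regularity of $(\hat\rG_0,\hat\fg_1)$ is then exactly Corollary \ref{cor: JM reg} applied inside $\hat\fg$, since the grading comes from the even $\fsl_2$-triple $\{f,h,e\}$.

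I do not expect a serious obstacle; the only subtlety is the bookkeeping required to see that the internal grading $\hat\fg=\bigoplus_j\hat\fg_j$ agrees with the $\fsl_2$-grading coming from $\ad_{h/2}$ restricted to $\hat\fg$, and that the $h$-weight-$2$ piece of $\hat\fg$ coincides with $\hat\fg_1$ so that the Kostant--Malcev open-orbit statement lands in the correct space.
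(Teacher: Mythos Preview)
Your argument is correct and is exactly the unpacking of why the paper declares this proposition ``immediate'': you verify that $\{f,h,e\}\subset\hat\fg$, that the grading $\hat\fg=\bigoplus_j\hat\fg_j$ has grading element $h/2$, and then invoke Kostant--Malcev and Corollary~\ref{cor: JM reg} inside the reductive algebra $\hat\fg=\fg^s$. The paper supplies no further proof, so there is nothing to compare beyond noting that you have spelled out the details the authors left implicit.
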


\begin{definition}\label{def max JM reg}
If $e\in\Omega$ is in the open $\rG_0$-orbit, then $(\hat\rG_0,\hat\fg_1)$ will be called a maximal JM-regular phvss of $(\rG_0,\fg_1).$
For any $e\in\fg_1\setminus\{0\}$, we will call $(\hat\rG_0,\hat\fg_1)$ a maximal JM-regular phvss for $e$. 
\end{definition}
\begin{remark}
If $(\rG_0,\fg_1)$ is JM-regular, then for any $e\in\Omega$ an associated $\fsl_2$-triple has the form $\{f,2\zeta,e\}$. In this case, $(\rG_0,\fg_1)=(\hat\rG_0,\hat\fg_1).$
\end{remark}
\begin{example}\label{ex max jm reg}
 Consider the phvs $(\rS(\rGL_{p}\C\times\rGL_q\C), M_{p,q})$ from Example \ref{ex phvs M_p,q} part (2). If $p\neq q$, then the phvs is not JM-regular. For $p>q,$ a maximal JM-regular phvss is isomorphic to $(\rS(\rGL_{q}\C\times\rGL_q\C\times\rGL_{p-q}\C),M_{q,q})$, for the action $(A,B,C)\cdot M=AMB^{-1}.$ 
 In general, the maximal JM-regular phvss of the phvs's $(\rG_0,\fg_1)$ from Example \ref{ex compact dual of herm} are related to the maximal subtube of the associated Hermitian symmetric space.

 For the phvs $(\rGL_p\C\times\rSO_q\C,M_{p,q})$ from Example \ref{ex phvs M_p,q} part (3) with $p>q,$ the phvs is not JM-regular and a maximal JM-regular phvss is isomorphic to $(\rGL_q\C\times \rGL_{p-q}\C\times \rSO_q\C, M_{q,q})$, where the action is given by $(A,B,C)\cdot M=AMC^{-1}$. The maximal JM-regular phvss of $(\rS(\rGL_p\C\times\rGL_q\C\times\rGL_r\C),M_{p,q}\oplus M_{q,r})$ can be described similarly.
 For the regular (non JM-regular) phvs $(\rG_0,\fg_1)=(\rGL_1\C\times \rGL_2\C,\C^2\oplus\C^2)$ from Example \ref{ex reg not jm reg}, the maximal JM-regular phvss $(\hat\rG_0,\hat\fg_1)$ containing $(v,w)=((1,0),(1,0))$ is 
 \[\hat\rG_0=\{(\lambda,A)\in \rGL_1\C\times\rGL_2\C~|~A=\smtrx{a&0\\0&b}\} \ \ \text{and}\ \ \ \hat\fg_1=\langle v\rangle\oplus \langle w\rangle.\]
\end{example}

The construction of a maximal JM-regular phvss of $(\rG_0,\fg_1)$ containing $e\in\fg_1$ depends on $e$ and a choice of $\fsl_2$-triple $\{f,h,e\}$. However, if $e,e'\in\fg_1$ are in the same $\rG_0$-orbit then any two maximal JM-regular phvss's containing $e$ and $e'$ are $\rG_0$-conjugate.

\begin{proposition}\label{prop all max Jm reg G0-conjugate}
  Let $e,e'\in\fg_1$ so that $e'\in\rG_0\cdot e$, let $\{f,h,e\},~\{f',h',e'\}$ be two $\fsl_2$-triples with $h,h'\in\fg_0$. Then there is $g\in\rG_0$ so that 
  \[\{f',h',e'\}=\{\Ad_g f, \Ad_g h,\Ad_g e\}.\]
  In particular, the associated maximal JM-regular phvss's $(\hat\rG_0,\hat\fg_1)$ and $(\hat\rG_0',\hat\fg_1')$ containing $e$ and $e'$ respectively are $\rG_0$-conjugate.
\end{proposition}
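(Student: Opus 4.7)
The plan is to reduce to a uniqueness statement for $\fsl_2$-triples completing a fixed nilpotent and then to invoke a graded refinement of the Jacobson--Morozov--Kostant uniqueness theorem. Since $e'\in\rG_0\cdot e$, I first pick $g_0\in\rG_0$ with $\Ad_{g_0}e=e'$ and replace $\{f',h',e'\}$ by the $\rG_0$-conjugate triple $\Ad_{g_0^{-1}}\{f',h',e'\}=\{\Ad_{g_0^{-1}}f',\Ad_{g_0^{-1}}h',e\}$; because $\Ad_{\rG_0}$ preserves $\fg_0$, the new neutral element still lies in $\fg_0$, and it therefore suffices to treat the case $e=e'$.

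Under this reduction, any $\fsl_2$-triple $\{f_i,h_i,e\}$ with $h_i\in\fg_0$ automatically has $f_i\in\fg_{-1}$: writing $f_i=\sum_k (f_i)_k$ in $\ad_\zeta$-eigencomponents, the identities $h_i=[e,f_i]\in\fg_0$ with $e\in\fg_1$ together with $[h_i,f_i]=-2f_i$ force all summands of $\zeta$-weight $\neq -1$ to vanish. What remains is therefore to produce $g_1$ in the $\rG_0$-stabilizer $\rG_0^e$ of $e$ with $\Ad_{g_1}\{f_1,h_1,e\}=\{f_2,h_2,e\}$.

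To establish this, I would invoke the graded Jacobson--Morozov theorem (Vinberg's refinement), asserting that $\rG_0^e$ acts transitively on the set of $\fsl_2$-triples completing $e$ whose neutral element lies in $\fg_0$. The classical Kostant uniqueness (see \cite[\S 3.4]{CollMcGovNilpotents}) first furnishes $g\in\exp(\fn^e)\subset\rG^e$ with $\Ad_g\{f_1,h_1,e\}=\{f_2,h_2,e\}$, where $\fn^e$ is the nilpotent radical of $\fg^e$. The promotion to $\rG_0^e$ then follows from a bigrading argument: $\zeta$ and $h_1$ are commuting semisimple elements of $\fg_0$, so they simultaneously diagonalize $\fn^e$; using the ambiguity of $g$ modulo the centralizer of the first triple (which contains the semisimple element $\zeta-h_1/2\in\fg^{\fsl_2^{(1)}}$), one can adjust $g$ so that the exponent lies in $\fn^e\cap\fg_0$, placing $g_1\in\exp(\fn^e\cap\fg_0)\subset\rG_0^e$. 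Composing with the first reduction, $g=g_0g_1\in\rG_0$ carries $\{f,h,e\}$ to $\{f',h',e'\}$.

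The second statement is then immediate: recalling from \eqref{eq hat algebra} that $\hat\fg_j=\fg_j\cap\tilde\fg_{2j}$ with $\tilde\fg_k$ the $\ad_h$-weight-$k$ space, the element $\Ad_g$ preserves the $\zeta$-grading (since $g\in\rG_0$) and intertwines $\ad_h$ with $\ad_{h'}$ (since $\Ad_g h=h'$), hence sends $\hat\fg_j$ to $\hat\fg_j'$ and $\hat\rG_0$ to $\hat\rG_0'$. The main obstacle in this plan is the refinement from $\rG^e$ to $\rG_0^e$ in the graded Jacobson--Morozov step, which is the place where the commuting-semisimple pair $(\zeta,h_1)$ and the Levi decomposition of $\fg^e$ genuinely enter.
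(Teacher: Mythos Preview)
Your overall structure matches the paper's: reduce to $e=e'$ by an initial $\rG_0$-conjugation, then find an element of $\rG_0^e$ carrying one triple to the other. The paper, however, does the second step more directly and your version has a gap at exactly this point. The paper observes that $h'-h\in\fu^e\cap\fg_0$ (since $h,h'\in\fg_0$), and then reruns the inductive construction from \cite[Theorem 3.4.7]{CollMcGovNilpotents} \emph{inside} $\fg_0$: at each stage the correction $Z_j$ is a scalar multiple of the $\ad_h$-degree-$j$ component of the current difference, and since $\fg_0$ is $\ad_h$-stable this component lies in $\fg_0\cap\fu^e$. Hence the full $Z$ lies in $\fg_0\cap\fu^e$ and $g=\exp(Z)\in\rG_0^e$.

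Your ``adjustment via ambiguity modulo the centralizer of the first triple'' does not work as stated. The Kostant element $u\in\exp(\fn^e)$ with $\Ad_u h_1=h_2$ is \emph{unique} in $\exp(\fn^e)$, so there is no ambiguity to exploit there; and multiplying by an element of the reductive centralizer $\rC$ of the triple takes you out of the unipotent group $\exp(\fn^e)$ without any evident mechanism forcing the product into $\rG_0$. If you want to salvage the ``first Kostant, then promote'' strategy, a correct argument is: the automorphism $\sigma_t=\Ad_{\exp(t\zeta)}$ fixes $h_1,h_2\in\fg_0$, preserves $\fg^e$ and its nilradical $\fn^e$, and sends $u$ to another element of $\exp(\fn^e)$ conjugating $h_1$ to $h_2$; uniqueness then gives $\sigma_t(u)=u$ for all $t$, whence $u\in\rG_0$. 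But this is not the argument you wrote, and in any case the paper's direct inductive construction within $\fg_0$ is simpler. Your observation that $f_i\in\fg_{-1}$ is correct but not needed for either argument.
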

\begin{proof}
 We may assume $e=e'.$  
 Let $\fg=\bigoplus_{j\in\Z}\tilde\fg_j$ be the $\Z$-grading associated to the $\fsl_2$-triple $\{f,h,e\}$ and $\fg^e$ be the centralizer of $e$ and $\fu^e=[e,\fg]\cap\fg^e=\fg^e\cap\bigoplus_{j>0}\tilde\fg_j.$ 
 We have $h'-h\in\fu^e$ since $[h-h',e]=0$ and $[e,f-f']=h'-h$. Since $h,h'\in\fg_0$, following the proof \cite[Theorem 3.4.7]{CollMcGovNilpotents}, we can inductively construct $Z\in\fg_0\cap\fu^e$ so that $\Ad_{\exp(Z)}(h)=h+(h'-h)=h'.$ Note that $\Ad_{\exp(Z)}e=e$, and, by the uniqueness part of the Jacobson--Morozov theorem, we have $\Ad_{\exp(Z)}f=f'.$ 
\end{proof}

Finally, we note that every $e\in\fg_1$ defines a parabolic subgroup of $\rP_{0,e}<\rG_0.$ For $e\in\fg_1\setminus\{0\}$, let $\{f,h,e\}$ be an associated $\fsl_2$-triple with $h\in\fg_0.$ If $\fg=\bigoplus_{j\in\Z}\tilde\fg_j$ is the $\Z$-grading with grading element $h,$ then $\tilde\fp=\bigoplus_{j\geq 0}\tilde\fg_j$ is a parabolic subalgebra of $\fg.$ Moreover, $\tilde\fp$ depends only of $e\in\fg_1\setminus\{0\}$ and not the $\fsl_2$-triple $\{f,h,e\}$ (see for example \cite[Remark 3.8.5]{CollMcGovNilpotents}). Note that the parabolic $\tilde\fp$ can also be defined by 
\begin{equation}
  \label{eq: tilde p}\tilde\fp=\{x\in\fg~|~\Ad_{\exp(-t\frac{h}{2})}~\text{is bounded as $t\to\infty$}\}.
\end{equation}
The parabolic subalgebra $\fp_0(e)\subset\fg_0$ is defined by 
\[\fp_{0,e}=\fg_0\cap\tilde\fp.\]
We will denote the associated parabolic subgroup by $\rP_{0,e}<\rG_0.$ Note that $\hat\fg_0\subset\fp_{0,e}$ and $\hat\rG_0<\rP_{0,e}$ define a Levi subalgebra and subgroup respectively.

\begin{proposition}\label{prop: s defines parabolic of g0}
 Let $e\in\fg_1\setminus\{0\}$ and $\{f,h,e\}$ be an $\fsl_2$-triple with $h\in\fg_0.$ If $s=\zeta-\frac{h}{2}$, then  
 \[\fp_{0,e}=\{x\in\fg_0~|~\Ad_{\exp(ts)} x~\text{is bounded as $t\to\infty$}\}\] 
\end{proposition}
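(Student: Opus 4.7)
The plan is very short: reduce the boundedness condition for $\Ad_{\exp(ts)}$ to the one for $\Ad_{\exp(-t h/2)}$ by using that $\zeta$ acts trivially on $\fg_0$.

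First I would unpack the definition of $\tilde\fp$. For $x\in\tilde\fg_j$ (the $j$-eigenspace of $\ad_h$), one has $\Ad_{\exp(-t h/2)}x = e^{-tj/2}x$, so the boundedness condition as $t\to\infty$ is equivalent to $j\geq0$, confirming $\tilde\fp=\bigoplus_{j\geq0}\tilde\fg_j$ agrees with the formula \eqref{eq: tilde p}. Therefore by definition
\[\fp_{0,e}=\fg_0\cap\tilde\fp=\{x\in\fg_0\mid \Ad_{\exp(-t h/2)}x\text{ is bounded as }t\to\infty\}.\]

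Next, the key observation is that $h\in\fg_0$, so $[\zeta,h]=0$, hence $\zeta$ and $s=\zeta-\tfrac{h}{2}$ commute with $\tfrac{h}{2}$. Consequently $\exp(ts)=\exp(t\zeta)\exp(-t h/2)$ in $\rG$. For any $x\in\fg_0$ we have $[\zeta,x]=0$, so $\Ad_{\exp(t\zeta)}x=x$, and therefore
\[\Ad_{\exp(ts)}x=\Ad_{\exp(t\zeta)}\Ad_{\exp(-t h/2)}x=\Ad_{\exp(-t h/2)}x.\]
Combining this identity with the reformulation in the previous paragraph yields the claimed description of $\fp_{0,e}$.

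There is essentially no obstacle here: the only subtlety is making sure that $\zeta$ and $h$ commute so that the exponentials split, which is immediate from $h\in\fg_0$. No properties of the $\fsl_2$-triple beyond $h\in\fg_0$ are used in this particular statement.
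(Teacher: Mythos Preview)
Your proof is correct and follows essentially the same approach as the paper: use the description \eqref{eq: tilde p} of $\tilde\fp$ together with the fact that $\Ad_{\exp(t\zeta)}$ acts trivially on $\fg_0$. Your write-up is simply a more detailed version, making explicit the exponential splitting $\exp(ts)=\exp(t\zeta)\exp(-th/2)$ (and implicitly using that $\Ad_{\exp(-th/2)}$ preserves $\fg_0$, which holds since $h$ and $\zeta$ commute).
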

\begin{proof}
  The proposition follows immediately from the description of $\tilde\fp$ in \eqref{eq: tilde p} and the fact that $\Ad_{\exp(t\zeta)}x=x$ for all $x\in\fg_0.$
\end{proof}
% \marginpar{nothing really depends on the $\Z$-grading being the canonical $\Z$-grading of a parabolic or considering $\fg_1$}

%%%%%%%%%%%%%%%%%%%%%%%%%%%%%%
\section{The Toledo character and period domains}\label{sec toledo char and per dom}
%%%%%%%%%%%%%%%%%%%%%%%%%%%%%%

In this section we generalize notions from \cite{BGRmaximalToledo}, which concerned $\Z$-gradings $\fg=\fg_{-1}\oplus\fg_0\oplus\fg_1$, to arbitrary $\Z$-gradings. 
\subsection{The Toledo character and rank}
Let $\rG$ be a complex semisimple Lie group with Lie algebra $\fg$ and Killing form $B.$ Fix a $\Z$-grading $\fg=\bigoplus_{j\in\Z}\fg_j$ with grading element $\zeta.$ Recall that $(\rG_0,\fg_1)$ is a phvs, let $\Omega\subset\fg_1$ be the open $\rG_0$-orbit. Since $\fg_0$ is the centralizer of $\zeta,$ $B(\zeta,-):\fg_0\to\C$ defines a character. 

\begin{definition}\label{def Toledo character}
  The Toledo character $\chi_T:\fg_0\to\C$ is defined by  
  \[\chi_T(x)=B(\zeta,x)B(\gamma,\gamma)~,\]
  where $\gamma$ is the longest root such that $\fg_\gamma\subset\fg_1.$ 
\end{definition}

\begin{remark}
  The normalization factor $B(\gamma,\gamma)$ guarantees that the Toledo character is independent of the choice of invariant bilinear form $B.$ 
\end{remark}

Let $e,e'\in\fg_1\setminus\{0\}$ such that $e'\in\rG_0\cdot e$ and let $\{f,h,e\}$, $\{f',h',e'\}$ be two $\fsl_2$-triples with $h,h'\in\fg_0.$ By Proposition \ref{prop all max Jm reg G0-conjugate}, there is $g\in\rG_0$ so that $\{\Ad_g f,\Ad_g h,\Ad_g e\}=\{f',h',e'\}$. Since $\Ad_g\zeta=\zeta,$ we have
\begin{equation}\label{eq welldefined}
  \chi_T(h)=B(\zeta,h) B(\gamma,\gamma)=B(\Ad_g\zeta,\Ad_g h)B(\gamma,\gamma)=\chi_T(h').
\end{equation}
 As a result, we make the following definition.

\begin{definition}\label{def toledo rank}
  Let $e\in\fg_1$ and $\{f,h,e\}$ be an $\fsl_2$-triple with $h\in\fg_0.$ Define the Toledo rank of $e$ by 
  \[\rk_T(e)=\frac{1}{2}\chi_T(h).\]
  Define the Toledo rank of the phvs $(\rG_0,\fg_1)$ by 
  \[\rk_T(\rG_0,\fg_1)=\rk_T(e) \ \ \text{for $e\in\Omega$.}\]
\end{definition}
\begin{remark}
  Note that if $(\rG_0,\fg_1)$ is a JM-regular phvs, then $\rk_T(\rG_0,\fg_1)=B(\zeta,\zeta)B(\gamma,\gamma).$ 
\end{remark}

\begin{example}
  For the phvs's $(\rG_0,\fg_1)$ from Example \ref{ex compact dual of herm}, the Toledo rank of $(\rG_0,\fg_1)$ agrees with the rank of the associated Hermitian symmetric space. 
\end{example}

\begin{proposition}\label{prop:toledo-rank}
 Let $e\in\fg_1$ and $\{f,h,e\}$ be an associated $\fsl_2$-triples with $h\in\fg_0.$ Then 
 \begin{equation}
 \rk_T(e)=B(\frac{h}{2},\frac{h}{2})B(\gamma,\gamma).\label{eq:3}
\end{equation}
\end{proposition}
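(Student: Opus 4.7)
The goal is to show that the two expressions $\frac{1}{2}B(\zeta,h)$ and $B(\tfrac{h}{2},\tfrac{h}{2}) = \tfrac{1}{4}B(h,h)$ agree (after multiplying both by the normalization $B(\gamma,\gamma)$). Equivalently, setting $s = \zeta - \tfrac{h}{2}$ as in Section~\ref{sec JM reg phvss}, the claim reduces to the vanishing
\[
B(s,h) = 0.
\]
The plan is to prove this vanishing by decomposing $\fg$ under the $\fsl_2$-triple $\{f,h,e\}$ and exploiting the fact that $s$ is an $\fsl_2$-invariant semisimple element.

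\textbf{Step 1: Reduction.} I would first record the identity
\[
\tfrac12 B(\zeta,h) = B(\tfrac{h}{2},\tfrac{h}{2}) \iff B(\zeta - \tfrac{h}{2},\, h) = 0,
\]
and observe that this is exactly $B(s,h) = 0$. Thus the proposition follows from $B(s,h) = 0$ by multiplying through by the scalar $B(\gamma,\gamma)$.

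\textbf{Step 2: $s$ commutes with the $\fsl_2$-triple.} A direct computation shows $[s,h]=0$ (since $h\in\fg_0$ and $\zeta$ is central in $\fg_0$), and
\[
[s,e] = [\zeta,e] - [\tfrac{h}{2},e] = e - e = 0, \qquad [s,f] = [\zeta,f] - [\tfrac{h}{2},f] = -f+f = 0,
\]
using $e\in\fg_1$ and $f\in\fg_{-1}$. Hence $s$ lies in the centralizer of the $\fsl_2$-subalgebra $\fs = \langle f,h,e\rangle$.

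\textbf{Step 3: Decomposition under $\fs$ and Schur-type vanishing.} Decompose $\fg$ as an $\fs$-module into irreducibles $\fg = \bigoplus_{\alpha} V_{n_\alpha}$, where $V_n$ denotes the $(n+1)$-dimensional irreducible representation of $\fsl_2$. Since $s$ commutes with $\fs$, it preserves each $V_{n_\alpha}$ and, being semisimple and $\fs$-equivariant, acts on each $V_{n_\alpha}$ as a scalar $c_\alpha \in \C$. On $V_n$ the eigenvalues of $\ad_h$ are $-n,-n+2,\ldots,n-2,n$, which sum to $0$, so
\[
\tr\bigl(\ad_h|_{V_{n_\alpha}}\bigr) = 0.
\]
Therefore
\[
B(s,h) = \tr(\ad_s\,\ad_h) = \sum_{\alpha} c_\alpha \cdot \tr\bigl(\ad_h|_{V_{n_\alpha}}\bigr) = 0.
\]

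\textbf{Step 4: Conclude.} Combining Steps 1 and 3 yields $\tfrac12 B(\zeta,h) = B(\tfrac{h}{2},\tfrac{h}{2})$, and multiplying by $B(\gamma,\gamma)$ gives exactly
\[
\rk_T(e) = \tfrac12 \chi_T(h) = B(\tfrac{h}{2},\tfrac{h}{2})\, B(\gamma,\gamma).
\]
No step looks technically hard; the only subtle point is recognizing that $s$ is $\fsl_2$-invariant (Step 2), which is what makes the symmetric-weights argument in Step 3 applicable. This invariance is exactly the same fact that ensured $\hat\fg = \fg^s$ is reductive in Proposition~\ref{prop max jm reg for e}, so the proof is essentially a traceform interpretation of the construction of the maximal JM-regular subspace.
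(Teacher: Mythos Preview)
Your proof is correct, and like the paper you reduce to showing $B(s,h)=0$ with $s=\zeta-\tfrac{h}{2}$. But the paper establishes this vanishing in one line rather than via $\fsl_2$-representation theory: since $h=[e,f]$ and $[s,e]=0$, invariance of the Killing form gives
\[
B(s,h)=B(s,[e,f])=B([s,e],f)=0.
\]
(Equivalently: $s\in\fg^e$, $\tfrac{h}{2}\in\ad_e(\fg)$, and these subspaces are $B$-orthogonal --- the same trick already used in the proof of Proposition~\ref{prop: relative invariant for sl2 triple}.) Your Step~3 reaches the same conclusion through the decomposition of $\fg$ into $\fsl_2$-irreducibles and the symmetry of the $\ad_h$-weights; this is valid but heavier, and the sentence ``$s$ preserves each $V_{n_\alpha}$'' needs one more word: $s$ need not preserve an arbitrary irreducible summand, but since $\ad_s$ is semisimple you may choose the decomposition inside its eigenspaces, after which $s$ is indeed scalar on each piece. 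The paper's route is shorter and uses only the single fact $[s,e]=0$ from your Step~2, not the full $\fsl_2$-invariance of $s$.
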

\begin{proof}
  Write $\zeta=\frac{h}{2}+s$. Note that $s\in\fg^e$ and $\frac{h}{2}\in\ad_e(\fg).$ Thus, $B(\frac{h}{2},s)=0$ and
  \[\rk_T(e)=B(\zeta,\frac{h}{2})B(\gamma,\gamma)=B(\frac{h}{2}+s,\frac{h}{2})B(\gamma,\gamma)=B(\frac{h}{2},\frac{h}{2})B(\gamma,\gamma).\]
\end{proof}

The following proposition will be used often in subsequent sections.
\begin{proposition}\label{prop JM reg toledo has rel invar}
  If $(\rG_0,\fg_1)$ is a JM-regular phvs, then there is positive integer multiple $q\cdot \chi_T$ of the Toledo character which lifts to a character $\chi_{T,q}:\rG_0\to\C^*$ which has a relative invariant of degree $q\cdot\rk_T(\rG_0,\fg_1).$
\end{proposition}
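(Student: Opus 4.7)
The strategy is to apply the argument of Proposition \ref{prop: relative invariant for sl2 triple} to the Toledo character, exploiting the fact that for a JM-regular phvs the Toledo character is simply a nonzero scalar multiple of the character $\chi_h$ considered there. Indeed, since $(\rG_0,\fg_1)$ is JM-regular, there is an even $\fsl_2$-triple $\{f,h,e\}$ with $e\in\Omega$ and grading element $\zeta=h/2$, whence
$$\chi_T(x)=B(h/2,x)B(\gamma,\gamma)=B(\gamma,\gamma)\,\chi_h(x).$$

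First I would verify that $\chi_T$ vanishes on the Lie algebra $\fg_0^e$ of the $\rG_0$-stabilizer of $e$. For $x\in\fg_0^e$, invariance of the Killing form together with $[e,x]=0$ gives
$$B(h,x)=B([e,f],x)=B(f,[e,x])=0,$$
so $\chi_T|_{\fg_0^e}=0$. Since $(\rG_0,\fg_1)$ is JM-regular, $\rG_0^e$ is reductive by Corollary \ref{cor: JM reg} and thus has finitely many connected components. Exactly as in the proof of Proposition \ref{prop: relative invariant for sl2 triple}, I can then find a positive integer $q$ such that $q\chi_T$ exponentiates to a holomorphic character $\chi_{T,q}:\rG_0\to\C^*$ whose restriction to $\rG_0^e$ is trivial.

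By Proposition \ref{prop: phvs facts}(3), $\chi_{T,q}$ admits a relative invariant $F:\fg_1\to\C$ with $F(e)\neq 0$, and by part (1) of the same proposition $F$ is homogeneous of some degree $d$. To compute $d$, observe that $\zeta=h/2$ gives $[\zeta,e]=e$, so $\Ad_{\exp(t\zeta)}(e)=e^t e$, and
$$e^{td}F(e)=F(e^t e)=F(\Ad_{\exp(t\zeta)}(e))=\chi_{T,q}(\exp(t\zeta))F(e)=\exp(tq\chi_T(\zeta))F(e).$$
Comparing yields $d=q\chi_T(\zeta)=qB(\zeta,\zeta)B(\gamma,\gamma)=q\cdot\rk_T(\rG_0,\fg_1)$, where the final equality is Proposition \ref{prop:toledo-rank} together with $\zeta=h/2$.

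The only potentially subtle point is the integrality required to lift $q\chi_T$ to an honest holomorphic character of $\rG_0$ (and not merely of a cover), but this proceeds exactly as in Proposition \ref{prop: relative invariant for sl2 triple}, using that the character lattice of the complex reductive group $\rG_0$ is of full rank and that the component group of $\rG_0^e$ is finite.
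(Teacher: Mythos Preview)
Your proof is correct and is precisely what the paper intends: the paper states Proposition~\ref{prop JM reg toledo has rel invar} without proof, treating it as an immediate consequence of Proposition~\ref{prop: relative invariant for sl2 triple} once one observes that for a JM-regular phvs $\chi_T=B(\gamma,\gamma)\chi_h$ and $\rk_T(\rG_0,\fg_1)=B(\zeta,\zeta)B(\gamma,\gamma)$ (the remark after Definition~\ref{def toledo rank}). Your write-up simply reruns that argument with the rescaled character, which is exactly right.
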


\newcommand{\revdots}{\mathinner{\mkern1mu\raise1pt\vbox{\kern7pt\hbox{.}}\mkern2mu\raise4pt\hbox{.}\mkern2mu\raise7pt\hbox{.}\mkern1mu}}
\begin{example}\label{calc_SO2pq}
  We illustrate these notions in the case of Example \ref{running_example}, with $G=\rSO_{2p+q}\C$ and $G_0=\rGL_p\C\times \rSO_q\C$. Taking an isotropic basis of $\C^{2p+q}=\C^p\oplus\C^q\oplus\C^p$, such that $\langle e_i,e_{2p+q+1-i}\rangle=1$, we have the grading element
  \[ \zeta =
    \begin{pmatrix}
      \Id_p &  &  \\ & 0 & \\ & & - \Id_p
    \end{pmatrix}.\]
  Then $\fg_1=\Hom(\C^p,\C^q)$, where $u\in \Hom(\C^p,\C^q)$ represents the matrix
\[ e =
  \left(\begin{array}{c|c|c}
    \phantom{u} & - u^T & \\ \hline & & u \\ \hline & &
  \end{array}\right), \text{ where } u^T_{ij}=u_{p+1-j,q+1-i}.\]
The orbits are classified by two integers $(r_1,r_2)$, such that the image of $u\in \Hom(\C^p,\C^q)$ is the sum of a nondegenerate subspace of dimension $r_1$ and a totally isotropic subspace of dimension $r_2$. Let $J_r$ be the rank $r$ matrix \[ J_r = \left(   \begin{smallmatrix}
    & & 1 \\ & \revdots & \\ 1 & &
  \end{smallmatrix}\right),\]
then one can take
\[ u = \begin{pmatrix}
    \Id_{r_2} & & \\ & \Id_{r_1}  & \\ & 0 & \\ & J_{r_1} &  \\   & & 0
  \end{pmatrix}, \qquad
  h = \left(\begin{array}{ccc|ccc|ccc}
    0 & & & & & & & & \\
    & 2\Id_{r_1} & & & & & & & \\
    & & \Id_{r_2} & & & & & & \\ \hline
    & & & \Id_{r_2} & & & & & \\
    & & & & 0 & & & & \\
    & & & & & -\Id_{r_2} & & & \\ \hline
    & & & & & & -\Id_{r_2} & & \\
    & & & & & & & -2 \Id_{r_1} & \\
    & & & & & & & & 0
  \end{array}\right).
\]
(The formula for $u$ is valid if $r_2+r_1\leq \frac q2$, the reader will modify $u$ accordingly if $r_2+r_1>q$; the formula for $h$ remains the same).
Take the standard invariant form $B(X,Y)=\tr(XY)$.
If $q>1$ we have $B(\gamma,\gamma)=1$ and therefore $\rk_Te=\frac14 B(h,h)=2r_1+r_2$. On the other hand $\chi_T(x)=B(\zeta,x)$, so that we will have $\tau(E,\varphi)=2\deg V$ if the Higgs bundle $(E,\varphi)$ is given as $V\oplus W\oplus V^*$ with $V$ a $\rGL_p\C$-bundle and $W$ a $\rSO_q\C$-bundle. This justifies the inequality (\ref{eq:4}) as a consequence of Corollary \ref{cor am ineq for comp}. 

\end{example}

% \textcolor{red}{Normalized rank?}
\subsection{Real forms and period domains}\label{sec real forms}
Let $\fg$ be a complex semisimple Lie algebra and consider and conjugate linear involution $\sigma:\fg\to\fg.$ The fixed point set $\fg^\sigma\subset\fg$ is a real subalgebra such that $\fg^\sigma\otimes\C=\fg.$ Such a subalgebra is called a real form of $\fg.$ On the level of groups, a real form $\rG^\sigma<\rG$ is the fixed point set of an anti-holomorphic involution $\sigma:\rG\to\rG.$ A real form is called compact if $\rG^\sigma$ is compact or $\fg^\sigma$ is Lie algebra of a maximal compact subgroup. Compact real forms exist and are unique up to conjugation. 

Real forms of $\fg$ can be equivalently defined in terms of complex linear involutions $\theta:\fg\to\fg.$ Namely, Cartan proved that, given a real form $\sigma$, there is a compact real form $\tau:\fg\to\fg$ such that $\sigma\circ\tau=\tau\circ \sigma$, and that given a complex linear involution $\theta$, there is a compact real form $\tau:\fg\to\fg$ so that $\theta\circ \tau=\tau\circ \theta.$ The correspondence is then given by setting $\theta=\sigma\circ\tau$. 
A real subalgebra $\sigma:\fg\to\fg$ is said to be of {\bf Hodge type} if $\sigma$ is an inner automorphism of $\fg$. If $\tau$ is a compact real form so that $\theta=\tau\circ\sigma=\sigma\circ\tau,$ then $\sigma$ is of Hodge type if and only if there is a Cartan subalgebra $\ft\subset\fg$ such that $\theta|_{\ft}=\Id.$ 

Given a complex linear involution $\theta:\fg\to\fg$, we will write $\fg=\fh\oplus\fm$ for $\pm1$-eigenspaces of $\theta$, namely $\theta|_\fh=\Id$ and $\theta_\fm=-\Id$. We will call the decomposition $\fg=\fh\oplus\fm$ the {\bf complexified Cartan decomposition} of a real form $\fg^\R\subset\fg$.
There is a real form of Hodge type associated to every $\Z$-grading of $\fg$.
\begin{proposition}\label{prop hodge real form of Zgrad}
  Let $\fg=\bigoplus_{j\in\Z}\fg_j$ be a $\Z$-grading, and define $\theta:\fg\to\fg$ by 
  \[\theta|_{\fg_j}=(-1)^j\Id.\]
  Then $\theta$ is a Lie algebra involution which defines a real form of Hodge type. 
\end{proposition}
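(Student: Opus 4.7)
The plan is to check the two assertions separately. For the first, I would observe that $\theta^2=\Id$ and $\theta[x,y]=[\theta x,\theta y]$ follow immediately from $(-1)^{2j}=1$ and $(-1)^{i+j}=(-1)^i(-1)^j$, together with the grading condition $[\fg_i,\fg_j]\subset\fg_{i+j}$; this is no more than a one-line verification on homogeneous elements extended by linearity.

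For the second assertion, I would invoke the general theorem of Cartan recalled in the text to produce a compact real form $\tau$ commuting with $\theta$; then $\sigma:=\tau\theta=\theta\tau$ is an antiholomorphic involution whose complex-linear Cartan involution is $\theta$ itself. Appealing to the criterion restated just before the proposition, it then suffices to exhibit a Cartan subalgebra $\ft\subset\fg$ on which $\theta$ acts as the identity.

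I would build such a $\ft$ inside the Levi $\fg_0$: since $\zeta\in\fg_0$ is semisimple with $\fg$-centralizer equal to $\fg_0$, any Cartan subalgebra of $\fg_0$ containing $\zeta$ is also a Cartan subalgebra of $\fg$, because its centralizer in $\fg$ is contained in $\fg^\zeta=\fg_0$ and a Cartan of $\fg_0$ is self-centralizing there. Fixing such a $\ft$, the inclusion $\ft\subset\fg_0$ gives $\theta|_\ft=\Id$, which is exactly the condition needed. As a side remark, the identity $\ad_\zeta|_{\fg_j}=j\cdot\Id$ yields the explicit formula
\[\theta=\exp(\pi i\,\ad_\zeta)=\Ad_{\exp(\pi i\,\zeta)},\]
making manifest that $\theta$ is an inner automorphism, consistently with the criterion.

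The main subtlety is the choice of Cartan: one cannot take an arbitrary Cartan of $\fg$, since $\theta$ need not be trivial on Cartans transverse to the grading, and the construction really uses that $\fg_0$ is the Levi of the parabolic associated to the grading. Beyond this point of care, no serious obstacle arises; Cartan's theorem does all the heavy lifting in producing the commuting compact involution $\tau$.
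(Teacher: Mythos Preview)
Your proof is correct and follows essentially the same approach as the paper's: both show $\theta$ is an involution and then appeal to the criterion that $\theta|_{\ft}=\Id$ for some Cartan subalgebra $\ft\subset\fg_0$. The paper's proof is much terser---it simply asserts that $\fg_0$ contains a Cartan subalgebra of $\fg$ by reference to the construction in \S\ref{section: canonical Z grade}---whereas you spell out why (via the centralizer of $\zeta$) and add the bonus observation $\theta=\Ad_{\exp(\pi i\zeta)}$.
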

\begin{proof}
  It is clear that $\theta$ is a Lie algebra involution. Moreover, from the constructions in \S\ref{section: canonical Z grade}, $\fg_0$ contains a Cartan subalgebra of $\fg.$ Hence the real form associated to the involution $\theta$ is of Hodge type.
\end{proof}
\begin{example}\label{ex real forms of examples}
  For gradings of the form $\fg=\fg_{-1}\oplus\fg_0\oplus\fg_1$, the canonical real forms are exactly the set of real forms of Hermitian type. For example, the real form of $\rSL_{p+q}\C$ associated to the phvs $(\rS(\rGL_p\C\times\rGL_q\C, M_{p,q})$ from Examples \ref{ex phvs M_p,q} and \ref{ex parabolic type} is $\rSU(p,q).$ 

The real form of $\rSO_{2p+q}\C$ associated to the phvs $(\rGL_p\C\times\rSO_q\C,M_{p,q})$ from Examples \ref{ex phvs M_p,q} and \ref{ex parabolic type} is $\rSO(2p,q).$ Similarly, the real form of $\rSL_{p+q+r}\C$ associated to the phvs $(\rS(\rGL_p\C\times\rGL_q\C\times\rGL_r\C),M_{p,q}\oplus M_{q,r})$ is $\rSU(p+r,q).$
For the regular (non JM-regular) phvs from Example \ref{ex reg not jm reg}, the associated real form of $\rSO_7\C$ is $\rSO(3,4).$ For the JM-regular phvs associated to a principal $\fsl_2$-triple (Example \ref{ex principal}), the associated real form is the split real form if the split real form is of Hodge type, otherwise it is the quasisplit real form which is nonsplit. For example, for $\rG=\rSL_{2p}\C,$ the split real form is $\rSL_{2p}\R$ but the canonical real form associated to the grading is $\rSU(p,p).$
\end{example}
\begin{remark}\label{rem compact inv and Z-grading}
  Given a compact real form $\tau,$ $B(-\tau(x),y)$ defines a  nondegenerate positive definite Hermitian inner-product on $\fg.$ Thus, $\tau(\fg_j)=\fg_{-j}$ for any $\Z$-grading $\fg=\bigoplus_j\fg_j$.
\end{remark}
% \begin{proposition}\label{prop compact real form adapt to Z grade}
%  Let $\fg=\bigoplus_{j\in\Z}\fg_j$ be a $\Z$-grading, and define $\theta:\fg\to\fg$ as in Proposition \ref{prop hodge real form of Zgrad}. Then any compact real form $\tau:\fg\to\fg$ such that $\tau\circ\theta=\theta\circ\tau$ satisfies $\tau(\fg_j)=\fg_{-j}$ for all $j$.
% \end{proposition}

Let $\theta$ be the involution from Proposition \ref{prop hodge real form of Zgrad} and $\tau$ be a compact real form such that $\sigma=\theta\circ\tau:\fg\to\fg$ is the associated real form of Hodge type. 
Let $\rG^\R<\rG$ be the associated real form of $\rG$ and let $\rH^\R<\rG^\R$ be the associated maximal compact subgroup. 
Note that the real form $\sigma$ restricts to a compact real form on $\fg_0,$ set $\rH_0^\R=\rG_0\cap\rG^\R.$ 

Consider the homogeneous space 
\[D=\rG^\R/\rH_0^\R.\]
Note that there is a fibration $D\to\rG^\R/\rH^\R$ over the Riemannian symmetric space of $\rG^\R.$ In fact, the homogeneous space $D$ has a natural  homogeneous complex structure. Indeed, the tangent bundle of $D=\rG^\R/\rH_0^\R$ is isomorphic to the associated bundle 
 \[TD\cong \rG^\R\times_{\rH_0^\R}\fg^\R/\fh_0^\R~.\]
Let $\fg^\R=\fh_0^\R\oplus\fq^\R$ be an orthogonal decomposition and $\fq=\fq^\R\otimes \C$. Then the complexified tangent bundle is $T_\C D\cong\rG^\R\times_{\rH_0^\R}\fq,$ and a complex structure on $D$ 
 is equivalent to an $\rH_0^\R$-invariant decomposition $\fq=\fq_-\oplus\fq_+$ such that $\sigma(\fq_+)=\fq_-$ and $[\fq_\pm,\fq_\pm]\subset\fq_\pm$. The decomposition $\fq=\fq_-\oplus\fq_+$ is given by setting 
 \[\xymatrix{\fq_+=\bigoplus\limits_{j>0}\fg_j&\text{and}&\fq_-=\bigoplus\limits_{j<0}\fg_j}.\]
The complex manifold $D$ is called a {\bf period domain} for $\rG^\R$.
Note that the holomorphic tangent bundle of a period domain decomposes as 
\[T^{1,0}_\C D\cong \rG^\R\times \rH_0^\R\fq_+=\bigoplus_{j>0} \rG^\R\times_{\rH_0^\R}\fg_j.\]
\begin{example}
  For the phvs's from Example \ref{ex compact dual of herm}, $\rG^\R<\rG$ is a Hermitian real form and $\rH_0^\R<\rG^\R$ is a maximal compact subgroup. Hence the period domain $D=\rG^\R/\rH_0^\R$ is the Riemannian symmetric space for $\rG^\R.$
 For the phvs $(\rG_0,\fg_1)$ from Example \ref{ex principal}, $\rG_0<\rG$ is a Cartan subgroup, $\rG^\R$ is the quasisplit real form of Hodge type and the period domain is $D=\rG^\R/\rU(1)^{\rk(\fg)}$.
  For the phvs $(\rGL_{p}\C\times\rSO(p,\C), M_{p,q})$ from Examples \ref{ex phvs M_p,q} and \ref{ex real forms of examples}, $\rG^\R=\rSO(2p,q)$ and the period domain is $D=\rSO(2p,q)/(\rU_{p}\times \rSO_q).$ 
\end{example}
\subsection{The Toledo character and holomorphic sectional curvature}\label{sec hol sec curv} 
%%%%%%%%%%%%%%%%%%%%%%%%%%%%

Here we discuss how the choice of the Toledo character is related to a metric of minimal holomorphic sectional curvature $-1$ on $\rG^\R/\rH_0^\R$.

As is well-known, one can define an invariant Hermitian metric on $\rG^\R/\rH_0^\R$ by using the Hermitian scalar product
\[ \langle x,y \rangle = B(x^*,y) \quad \text{ if } x,y\in \fg_k, \]
where $x^*=-\tau(x)$ and $\tau$ is the compact conjugation fixed on $\fg$. This does not give a K\"ahler metric (the corresponding 2-form is not closed) but one can still define the holomorphic sectional curvature $K$ of its Chern connection. We have the well-known formula (see for example \cite{PeriodDomainsCMP}):
\[
K(x)=- \frac{|[x,x^\ast]|^2}{|x|^4}.
\]
We are interested in the function $K$ on $\fg_1$. It is known (and reproved quickly below) that critical points of $K$ are obtained on elements $e$ such that $\{e^*,h=[e,e^*],e\}$ is a $\fsl_2$-triple. For such $e$, we have $|[e,e^*]|^2=\langle [[e,e^*],e],e \rangle=2|e|^2$, and $|e|^2=\langle [\zeta,e],e \rangle=\langle\zeta,[e,e^*]\rangle$, therefore
\[ K(e) = - \frac 2 {B(\zeta,h)}, \]
which is, up to a constant, the inverse of the Toledo rank defined earlier. Then we will prove the following.

\begin{proposition}\label{prop:sectional_curvature}
  The maximum of $K$ on $\fg_1$ is attained on the regular orbit, and is equal to $-\frac2{B(\zeta,h_{\rm reg})}$, where $e$ is a regular element such that $\{e^*,h_{\rm reg}=[e,e^*],e\}$ is a $\fsl_2$-triple.

  The minimum of $K$ on $\fg_1$ is attained on a minimal orbit, and is equal to $-B(\gamma,\gamma)$ for a long root $\gamma\in\Delta_1$ (the minimum is attained on $e_\gamma$, which belongs to a minimal orbit).

  Therefore, after multiplying the Hermitian metric by $B(\gamma,\gamma)$, we obtain a normalized metric with normalized holomorphic sectional curvature
    \[ -1 \leq K_{\rm norm} \leq - \frac1{\rk_T(G_0,\fg_1)}. \]
    At a general critical point $e\in\fg_1$ of the curvature (therefore  $\{e^*,[e,e^*],e\}$ is a $\fsl_2$-triple), we have the normalized value $K_{\rm norm}(e) = - \frac1{\rk_T(e)}$.
\end{proposition}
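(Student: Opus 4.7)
Since $K$ is invariant under positive rescaling, I seek its critical points on the unit sphere $\{|e|=1\}$. Using the ad-invariance identity $\langle[a,b],c\rangle = \langle b,[a^*,c]\rangle$ (which follows from the invariance of $B$ and the fact that $\tau$ is a Lie-algebra involution), differentiating $f(e)=|[e,e^*]|^2$ and applying Lagrange multipliers yields the Euler--Lagrange equation $[[e,e^*],e]=\lambda e$ for some $\lambda\in\R$. Applying the anti-involution $^*$ to this equation gives $[h,e^*]=-\lambda e^*$ (using $h^*=h$), and then $|h|^2 = \langle[e,e^*],h\rangle = \langle e^*,[e^*,h]\rangle = \lambda |e|^2$, so $\lambda\geq 0$. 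For $\lambda>0$, rescaling $e$ to make $\lambda=2$ arranges that $\{e^*,h,e\}$ with $h=[e,e^*]$ is an $\fsl_2$-triple; conversely, any such $e$ is a critical point.

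\textbf{Formula for $K$ at critical points.} The short computation from the excerpt gives $|[e,e^*]|^2 = 2|e|^2$ and $|e|^2 = B(\zeta,h)$, the second identity using $[\zeta,e]=e$, ad-invariance, and $\zeta^*=\zeta$ (by $\tau(\fg_j)=\fg_{-j}$, Remark~\ref{rem compact inv and Z-grading}). Hence
\[
  K(e) = -\frac{2}{B(\zeta,h)} = -\frac{B(\gamma,\gamma)}{\rk_T(e)}
\]
by Proposition~\ref{prop:toledo-rank} and Definition~\ref{def toledo rank}.

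\textbf{Identification of extremal orbits.} Continuity of $K$ on the compact unit sphere forces its extrema to be attained at critical points; since $K$ is constant on each $\rG_0$-orbit with value $-B(\gamma,\gamma)/\rk_T(e)$, the task reduces to extremizing $\rk_T(e)=B(h/2,h/2)B(\gamma,\gamma)$ (by Proposition~\ref{prop:toledo-rank}) over the nilpotent orbits in $\fg_1$. For the minimum of $K$: the neutral element $h$ of any $\fsl_2$-triple with $e\in\fg_1$ satisfies $\alpha(h)=2$ for every root $\alpha\in\Delta_1$ appearing in the weight decomposition of $e$; since $B$ is positive definite on the real span of $h$ and $t_\alpha$ (as $h$ lies in the noncompact direction $i\fu\cap\ft$), Cauchy--Schwarz gives $4 = B(h,t_\alpha)^2 \leq B(h,h)B(t_\alpha,t_\alpha) = B(h,h)B(\alpha,\alpha)$, hence $\rk_T(e)\geq B(\gamma,\gamma)/B(\alpha,\alpha)\geq 1$; equality holds precisely when $e$ lies in the orbit of a long root vector $e_\gamma$ (with coroot neutral $h_\gamma=2t_\gamma/B(\gamma,\gamma)$), giving $K(e_\gamma)=-B(\gamma,\gamma)$. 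For the maximum of $K$: the regular (open) orbit $\Omega$ has $\rk_T=\rk_T(\rG_0,\fg_1)$ by Definition~\ref{def toledo rank}, and is the maximizer because (after $\rG_0$-conjugating into a Weyl chamber of $\ft\cap\fg_0$) $h_{\rm reg}$ dominates all other $\fsl_2$-neutrals in the Dynkin--Kostant dominance order, while $\chi_T(h)=B(\zeta,h)B(\gamma,\gamma)$ is monotone for this order since $\zeta$ lies in the corresponding closed dual cone.

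\textbf{Normalization and main obstacle.} Rescaling the Hermitian scalar product by $B(\gamma,\gamma)$ divides $K$ by $B(\gamma,\gamma)$, giving $K_{\rm norm}(e)=-1/\rk_T(e)$ at critical points and hence the claimed bounds $-1 \leq K_{\rm norm} \leq -1/\rk_T(\rG_0,\fg_1)$. The main obstacle is the maximum in Step~3: the Cauchy--Schwarz argument cleanly produces the lower bound $\rk_T\geq 1$, but the upper bound relies on monotonicity of $\chi_T$ under the Dynkin--Kostant dominance order among $\fsl_2$-neutrals, which must use the positivity of $\zeta$ in the appropriate cone together with the Jacobson--Morozov correspondence between orbits and $\fsl_2$-triples.
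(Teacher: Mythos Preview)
Your argument for the minimum via Cauchy--Schwarz is a pleasant alternative to the paper's approach and essentially works, though you should make explicit that you first $\rK_0^\R$-conjugate $h=[e,e^*]$ into the fixed Cartan $\ft$ (possible since $h\in\fg_0$ is self-adjoint), so that both $h$ and $t_\alpha$ lie in $\ft\cap i\fk^\R$, where $B$ is positive definite and Cauchy--Schwarz applies.

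The genuine gap is in your treatment of the maximum, as you yourself flag. You assert that $h_{\rm reg}$ dominates every other $\fsl_2$-neutral $h$ (with $e\in\fg_1$) in a ``Dynkin--Kostant dominance order'' and that $B(\zeta,\cdot)$ is monotone for it. Neither statement is standard or proved here, and in fact the paper does \emph{not} establish $\rk_T(e)\leq\rk_T(\rG_0,\fg_1)$ directly: it deduces this inequality \emph{a posteriori} from the present Proposition (see the short Proposition immediately after it). So you are attempting to reverse the paper's logical order, and you would need an independent proof that the regular orbit maximizes $B(h,h)$ among $\fsl_2$-triples with $e\in\fg_1$. I do not see how to complete your dominance argument without essentially reproving the closure-order monotonicity you are trying to avoid.

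The paper's route sidesteps this entirely with a second-variation computation. At a critical $e$ with $\{e^*,h,e\}$ an $\fsl_2$-triple, one computes the Hessian of $K$ along $e(t)=\cos(t)e+\sin(t)x$ for $x\perp e$, $|x|=|e|$, obtaining
\[
\left.\tfrac{d^2}{dt^2}\right|_{t=0}|e|^4K(e(t))=4\langle(2-\ad_h)x,x\rangle-2\bigl|[x,e^*]+[e,x^*]\bigr|^2.
\]
If $e$ is not regular then $\fg_1\cap\ker\ad_{e^*}\neq0$ (this kernel is the Hermitian orthocomplement of $[\fg_0,e]$ in $\fg_1$); choosing $x$ there kills the second term, while $\ad_h\leq0$ on $\ker\ad_{e^*}$ makes the first term $\geq 8|x|^2>0$, so $e$ cannot be a local maximum of $K$. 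Dually, if the orbit of $e$ is not minimal, a structural fact about orbit closures (the paper cites Manivel) produces $x$ in the $\ad_h$-eigenspace of eigenvalue $\geq2$ with $[x,e^*]\neq0$, forcing the Hessian strictly negative. This local analysis pins the extrema to the correct orbits without any global comparison of Toledo ranks, and is what you should supply in place of the dominance-order claim.
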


From the Toledo character we obtain a $\rG^\R$-invariant 2-form on $\rG^\R/\rH_0^\R$ by
\[
\omega(x,y)=i\chi_T([x,y]),\quad \text{ for } x,y\in \fq^\R.
\]
This actually defines a pseudo-K\"ahler metric, which coincides with the previous normalized metric in the horizontal directions (that is in $\fg_1$). This explains the choice of the normalization for the Toledo character.

\begin{proof}[Proof of Proposition \ref{prop:sectional_curvature}]
  For completeness, we begin by proving that the critical points come from $\fsl_2$-triples $\{e^*,[e,e^*],e\}$. Since $K(e)$ is invariant by homothety, we can restrict to variations $\dot e\perp e$. Then
  \[ \dot K = - \frac2{|e|^4} \langle [e,e^*],[\dot e,e^*]+[e,\dot e^*]\rangle
    =  - \frac4{|e|^4} \Re \langle [[e,e^*],e], \dot e\rangle \]
It follows that for a critical point, $[[e,e^*],e]=\lambda e$ and up to renormalizing $e$ one can suppose $\lambda=2$.

We will now establish a second variation formula. Start from a $\fsl_2$-triple $(h=[e,e^*],e,e^*)$ and take a vector $x\perp e$ such that $|x|=|e|$. Consider
\[ e(t) = \cos(t) e + \sin(t) x = (1-\frac{t^2}2) e + t x + o(t^2). \]
Then $e(t)$ has constant norm, and, up to order 2,
\[ [e(t),e(t)^*] = [e,e^*] + t ( [x,e^*]+[e,x^*]) + t^2 ([x,x^*]-[e,e^*]) . \]
Therefore, again up to order 2,
\begin{align*}
  - |e|^4 K(e(t)) &= |[e(t),e(t)^*]|^2 \\
  &= |[e,e^*]|^2 + t^2 \big( |[x,e^*]+[e,x^*]|^2 + 2 \Re \langle [e,e^*],[x,x^*]-[e,e^*] \rangle \big).
\end{align*}
We have already noticed above that $|[e,e^*]|^2=2|e|^2=2|x|^2$, and therefore
\begin{align}
  \left.\frac{d^2}{dt^2}\right|_{t=0}|e|^4 K(e(t)) &= 2\left( 4 |x|^2 - 2 \langle [h,x],x \rangle -  |[x,e^*]+[e,x^*]|^2\right) \notag \\
  &= 4 \langle (2-\ad h)x,x \rangle -  2 |[x,e^*]+[e,x^*]|^2. \label{eq:2}
\end{align}

We first deduce that the maximum is attained on the regular orbit. Observe that $e$ is regular, that is the orbit of $e$ is open, if and only if $\fg_1 \cap \ker \ad e^*=0$. Therefore, if $e$ is not regular, we can take $x\in \ker \ad e^*$. Since $\ad h \leq 0$ on $\ker \ad e^*$, it follows from (\ref{eq:2}) that in the direction of $x$ we have
    \[ \left.\frac{d^2}{dt^2}\right|_{t=0}|e|^4 K(e(t)) \geq 8 |x|^2 \]
and therefore the maximum can not be attained at $e$. This proves the claim that the maximum is obtained on the regular orbit.

To find the minimum, we use the following fact \cite[Corollary 7, p. 100]{ManievelPrehom}: denote $\fu$ the subspace of $\fg_1$ given as the sum of the eigenspaces of $\ad h$ for the eigenvalues at least $2$. Then all orbits in the closure of the orbit of $e$ meet $\fu$. Therefore suppose that the orbit of $e$ is not minimal. It follows that we can take $x\in \fu$, and (\ref{eq:2}) gives us, in the direction of $x$,
    \[ \left.\frac{d^2}{dt^2}\right|_{t=0}|e|^4 K(e(t)) \leq  - 2 |[x,e^*]+[e,x^*]|^2. \]
    Such $x$ can not be in the kernel of $\ad e^*$ ($\ad h\leq 0$ on this kernel). Therefore the right hand side is nonzero, so that the minimum cannot be attained at $e$. Therefore the minimum can be attained only at a minimal orbit.

    There can be several minimal orbits if the prehomogeneous space is not irreducible. They are the orbits of elements $e_\gamma$ for $\gamma\in\Delta$ the longest root. As we have seen, $K(e_\gamma)=-\frac2{|e_\gamma|^2}=-\frac4{|h_\gamma|^2}=- B(\gamma,\gamma)$. The result follows.
\end{proof}

\begin{remark}\label{rem:holo-curv}
  If we fix an orbit $\cO$, then $K_{\rm norm}$ must have a maximum on $\overline{\cO}$. It follows from the proof of the proposition, and in particular of the description of the orbits in the closure of $\cO$, that the maximum on $\overline{\cO}$ is achieved at some element $e\in\cO$ such that $(e^*,[e,e^*],e)$ is a $\fsl_2$-triple; it is therefore equal to $K_{\rm norm}(e)=-\frac1{\rk_T(e)}$. In particular, we obtain that for any $x\in\overline{\cO}$ we have
  \begin{equation}
    \label{eq:1}
    -1 \leq K_{\rm norm}(x) \leq -\frac1{\rk_T(e)}.
  \end{equation}
\end{remark}

\begin{example}
  The case $\rG=\rSL_n\C$ is particularly simple to calculate. A first observation is that any nilpotent element $e\in\fsl_n\C$ belongs to the regular orbit of the nilpotent subalgebra of a parabolic algebra of $\fsl_n\C$. It follows that $e$ can be considered as a regular element of a $\fg_1$ for some grading of $\fsl_n\C$. This observation is not essential for our calculation but relates every nilpotent $e$ to variations of Hodge structures.

  We now calculate the corresponding bounds for the holomorphic sectional curvature: \[ -1\leq K\leq -\frac1{\rk_Te}, \] where, by (\ref{eq:3}), we have $\rk_Te=\frac14 B(h,h)B(\gamma,\gamma)=\frac12 B(h,h)$ since $B(\gamma,\gamma)=2$ in $\fsl_n$  for the standard invariant form $\tr(XY)$. If we have a Jordan block of size $k$, the eigenvalues of $h$ are $k-1$, $k-3$, ..., $-(k-1)$, therefore $B(h,h)=(k-1)^2+(k-3)^2+\cdots+(-(k-1))^2=\frac13 k(k^2-1)$. Finally, if $e$ has Jordan blocks of size $k_1$, $k_2$, ..., $k_j$ we obtain
  \[\rk_Te = \frac16 \sum_1^j k_i(k_i^2-1). \]
  This gives immediately  some curvature bounds in \cite{Qiongling_Li}.
\end{example}

\begin{proposition}
 Let $e\in\fg_1$ and let $\Omega\subset \fg_1$ be the open $\rG_0$-orbit of 
the phvs 
$(\rG_0,\fg_1)$. Then  
$$
0\leq \rk_T(e)\leq\rk_T(\rG_0,\fg_1),
$$  
with equality if and only if $e\in\Omega$.
\end{proposition}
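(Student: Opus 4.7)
The plan is to combine Proposition \ref{prop:toledo-rank} (for the lower bound) with Proposition \ref{prop:sectional_curvature} and Remark \ref{rem:holo-curv} (for the upper bound and rigidity) to deduce the statement. For the lower bound, Proposition \ref{prop:toledo-rank} gives $\rk_T(e)=B(\tfrac{h}{2},\tfrac{h}{2})B(\gamma,\gamma)$ for any $\fsl_2$-triple $\{f,h,e\}$ with $h\in\fg_0$. The element $h$ is semisimple with real integer eigenvalues under $\ad$, so $B(\tfrac{h}{2},\tfrac{h}{2})=\tfrac{1}{4}\tr((\ad h)^2)\geq 0$, while $B(\gamma,\gamma)>0$ since the Killing form is positive definite on the real span of the roots. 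Hence $\rk_T(e)\geq 0$, with equality precisely when $h=0$, equivalently when $e=0$.

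For the upper bound I would use that $\rk_T$ is constant on $\rG_0$-orbits by \eqref{eq welldefined}. Fix a compact real form $\tau$ and set $e^{*}=-\tau(e)$. Applied to the orbit $\cO=\rG_0\cdot e$, Remark \ref{rem:holo-curv} produces a representative $e'\in\cO$ such that $\{(e')^{*},[e',(e')^{*}],e'\}$ is an $\fsl_2$-triple and $K_{\mathrm{norm}}(e')=-1/\rk_T(e')=-1/\rk_T(e)$. Combining with the global bound $K_{\mathrm{norm}}\leq -1/\rk_T(\rG_0,\fg_1)$ from Proposition \ref{prop:sectional_curvature} yields $\rk_T(e)\leq\rk_T(\rG_0,\fg_1)$.

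For the equality case, one direction is immediate: if $e\in\Omega$ then $\rk_T(e)=\rk_T(\rG_0,\fg_1)$ by definition. Conversely, suppose $\rk_T(e)=\rk_T(\rG_0,\fg_1)$ with $e\neq 0$. Then the representative $e'$ attains the global maximum of $K_{\mathrm{norm}}$ on $\fg_1$, so $e'$ is in particular a local maximum. But the second variation computation in the proof of Proposition \ref{prop:sectional_curvature} shows that if $e'\notin\Omega$, there exists a nonzero $x\in\fg_1\cap\ker\ad(e')^{*}$ orthogonal to $e'$ in whose direction the second derivative of $|e'|^{4}K(e'(t))$ is strictly positive, contradicting local maximality. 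Hence $e'\in\Omega$, and since $e$ and $e'$ lie in a common $\rG_0$-orbit, $e\in\Omega$.

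The step I expect to require the most care is checking that Remark \ref{rem:holo-curv} really produces an $\fsl_2$-triple representative $e'$ of the ``symmetric'' form $((e')^{*},[e',(e')^{*}],e')$ so that the identity $K_{\mathrm{norm}}(e')=-1/\rk_T(e')$ applies verbatim. This is essentially established in the proof of Proposition \ref{prop:sectional_curvature}, but verifying it for a critical representative of a nonregular orbit is the key compatibility that turns the curvature estimates into the orbit-theoretic statement about $\rk_T$.
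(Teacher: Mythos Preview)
Your proposal is correct and follows essentially the same indirect route as the paper, namely passing through the normalized holomorphic sectional curvature via Proposition~\ref{prop:sectional_curvature} and Remark~\ref{rem:holo-curv} to obtain the upper bound and its equality case on the regular orbit. The only minor difference is that you handle the lower bound $\rk_T(e)\geq 0$ by the direct computation $B(h,h)=\tr((\ad h)^2)\geq 0$ rather than inferring it from the negativity of $K_{\mathrm{norm}}$, which is a small but welcome simplification.
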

\begin{proof}
We give an indirect proof applying  Proposition \ref{prop:sectional_curvature} and Remark \ref{rem:holo-curv}. The Toledo rank is related to the normalized holomorphic sectional curvature $K_{\rm norm}$ of the corresponding period domain by $K_{\rm norm}(e)=-\frac1{\rk_T(e)}$ if $(f,h,e)$ is a \emph{real} $\fsl_2$-triple 
(in a given orbit one can always find a $e$ which is part of such a triple, see 
 Remark \ref{rem:holo-curv}). Then it is proved that $K_{\rm norm}(e)\leq K_{\rm norm}(e')$ if the orbit of $e$ is included in the closure of the orbit of $e'$. Moreover the maximum of $K_{\rm norm}$ is attained only on the regular orbit. The proposition follows.  
\end{proof}

\section{Higgs bundles and variations of Hodge structure}
%%%%%%%%%%%%%%%%%%%%%%%%%%%%%%
\label{sec: higgs bundles nah}
For this this section, let $X$ be a compact Riemann surface of genus $g\geq 2$ and let $K$ be its canonical bundle.  
% We start by introducing the general notions of Higgs pairs, $\alpha$-stability and the Hitchin--Kobayashi correspondence for these objects following \cite{HiggsPairsSTABILITY}, then focus on the specific objects we are interested in. 

\subsection{Higgs bundles and Hodge bundles}
Let $\rG$ be a complex reductive Lie group with Lie algebra $\fg$ and nondegenerate $\rG$-invariant bilinear $\langle\cdot,\cdot\rangle$. Let $\rho:\rG\to\rGL(V)$ be a holomorphic representation. 
If $E\to X$ is a $\rG$-bundle, we will denote the $V$-bundle $E\times_\rG V$ associated to $E$ via the representation $\rho$ by $E(V).$

\begin{definition}
A $(\rG,V)$-Higgs pair is a pair $(E,\varphi)$ where $E$ is a holomorphic principal $\rG$-bundle on $X$ and $\varphi$ is a holomorphic section of $E(V)\otimes K.$ 
% \marginpar{do we ever consider L-twisted objects?}
\end{definition}

When $V=\fg$ and the representation $\rho$ is the adjoint representation, a $(\rG,\fg)$-Higgs pair is called a $\rG$-Higgs bundle. 
Suppose $\rG^\R<\rG$ is a real form with complexified maximal compact $\rH<\rG$ and complexified Cartan decomposition $\fg=\fh\oplus\fm$. When $\rho:\rH\to\rGL(\fm)$ is the restriction of the adjoint representation of $\rG$, then an $(\rH,\fm)$-Higgs pair is called a $\rG^\R$-Higgs bundle. 

\begin{remark}
When $\rG^\R$ is compact, a $\rG^\R$-Higgs bundle is just a holomorphic $\rG$-bundle. When $\rG^\R=\rG$ (viewed as a real form of $\rG\times\rG$), a $\rG^\R$-Higgs bundle is a $\rG$-Higgs bundle. 
\end{remark}
If $E$ is a principal $\rG$-bundle and $\hat\rG<\rG$ is a subgroup, then a structure group reduction of $E$ to $\hat\rG$ is a section $\sigma$ of the bundle $E(\rG/\hat\rG).$ Associated to such a reduction is a principal $\hat\rG$-subbundle $E_\sigma\subset E$ such that $E_\sigma(\rG)$ is canonically isomorphic to $E.$
\begin{definition}\label{def higgs pair reduction}
  Let $\rG$ be a complex reductive Lie group and $\rho:\rG\to\rGL(V)$ be a holomorphic representation. Let $\hat\rG<\rG$ and $\hat V\subset V$ be a $\rho(\hat\rG)$-invariant subspace. We say that a $(\rG,V)$-Higgs pair $(E,\varphi)$ reduces to a $(\hat\rG,\hat V)$-Higgs pair, if there is a holomorphic reduction $E_{\hat\rG}$ of $E$ to $\hat\rG$ such that $\varphi\in H^0(E_{\hat\rG}(\hat V)\otimes K)\subset H^0(E_{\hat\rG}(V)\otimes K).$
\end{definition}
For example, if $\rG^\R<\rG$ is a real form, $\rH<\rG$ is the complexification of a maximal compact of $\rG^\R$ and $\fg=\fh\oplus\fm$ is complexified Cartan decomposition, then a $\rG$-Higgs bundle $(E,\varphi)$ reduces to a $\rG^\R$-Higgs bundle if there is a holomorphic reduction $E_{\rH}$ of $E$ to $\rH$ such that $\varphi\in H^0(E_{\rH}(\fm)\otimes K).$

\begin{example}\label{ex sl2 uniformizing}
  Let $\{f,h,e\}$ be a basis for $\fsl_2\C$, let $\rT<\rP\rSL_2\C$ be the subgroup with Lie algebra $\langle h\rangle.$ Note that $\rT\cong\C^*$ and the adjoint action of $\rT$ on $\langle e\rangle$ is given by $\lambda\cdot e=\lambda e.$ So, if $E_\rT$ is the frame bundle of $K^{-1},$ then the associated bundle $E_\rT(\langle e\rangle)\otimes K\cong\cO_X.$ Hence we have an $\rP\rSL_2\C$-Higgs bundle 
  \[(E_\rT(\rP\rSL_2\C),e).\]
Since $\deg(K)$ is even, this example can be lifted to $\rSL_2\C.$ The lifted action of the subgroup $\C^*\cong\hat\rT<\rSL_2\C$ with Lie algebra $\langle h\rangle$ is given by $\lambda\cdot e=\lambda^2e.$ As a result, if $E_{\hat \rT}$ is the frame bundle of a square root $K^{-\frac{1}{2}}$ of $K^{-1}$, then $(E_{\hat\rT}(\rSL_2\C),e)$ defines an $\rSL_2\C$-Higgs bundle. These Higgs bundles will be the fundamental building blocks for the results of \S\ref{sec rigidity}.
\end{example}

Now suppose $\rG$ is a complex reductive Lie group and $\fg=\bigoplus_{j\in\Z}\fg_j$ is a $\Z$-grading of its Lie algebra with grading element $\zeta\in\fg_0.$ Let $\rG_0<\rG$ be the centralizer of $\zeta.$ 
Note that $\exp(\lambda\zeta)$ is in the center of $\rG_0$ and $\Ad(\exp(\lambda\zeta))$ acts on each $\fg_j$ by $\lambda^j\cdot\Id.$
Let $(E_{\rG_0},\varphi)$ be a $(\rG_0,\fg_k)$-Higgs pair. Note that the central element $\exp(\frac{\lambda}{k}\zeta)\in\rG_0$ defines a holomorphic automorphism of $E_{\rG_0}$ and acts on $\varphi$ by multiplication by $\lambda$. As a result we have an isomorphism of $(\rG_0,\fg_k)$-Higgs pairs
 \begin{equation}
   \label{eq iso of G0gk pairs}(E_{\rG_0},\varphi)\cong(E_{\rG_0},\lambda\varphi) \text{~ \ for all $\lambda\in\C^*$}.
 \end{equation}
Extending the structure group defines a $\rG$-Higgs bundle $(E_\rG,\varphi)$ 
\[(E_\rG,\varphi)=(E_{\rG_0}(\rG),\varphi)\]
since $E_{\rG_0}(\fg_k)\subset E_{\rG_0}[\fg]\cong E_{\rG}(\fg)$. Moreover, $(E_\rG,\varphi)\cong (E_{\rG},\lambda\varphi)$ for all $\lambda\in\C^*.$ 

\begin{definition}\label{def hodge bundle}
  A $\rG$-Higgs bundle $(E,\varphi)$ is called a {\bf Hodge bundle of type $(\rG_0,\fg_k)$} if it reduces to a $(\rG_0,\fg_k)$-Higgs pair.
\end{definition} 
\begin{example}
 The Higgs bundles from Example \ref{ex sl2 uniformizing} are Hodge bundles for the grading $\fg=\fg_{-1}\oplus\fg_0\oplus\fg_1=\langle f\rangle \oplus\langle h\rangle\oplus \langle e\rangle.$ In these cases, $\rG_0$ is a Cartan subgroup of $\rG.$ 
\end{example}
Given a $\Z$-grading $\fg=\bigoplus_{j\in\Z}\fg_j$, we can define a subalgebra $\tilde\fg$ consisting of summands $\fg_j$ with $j=0\mod k.$ Note that $\tilde\fg$ has an associated $\Z$-grading with $\fg_k=\tilde\fg_1.$ Let $\tilde\rG<\rG$ be the associated reductive subgroup. The following proposition is immediate.
\begin{proposition}\label{prop G0g1 type reduction}
  Let $(E,\varphi)$ be a $\rG$-Higgs bundle which is a Hodge bundle of type $(\rG_0,\fg_k)$. Then $(E,\varphi)$ reduces to a $\tilde\rG$-Higgs bundle and, as a $\tilde\rG$-Higgs bundle, it is a Hodge bundle of type $(\rG_0,\tilde\fg_1).$
\end{proposition}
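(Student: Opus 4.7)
The plan is a direct unpacking of definitions once the chain of subgroups $\rG_0\subset\tilde\rG\subset\rG$ is understood together with the identification of the centralizer of the new grading element inside $\tilde\rG$.

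First I would exhibit the reduction of $E$ to $\tilde\rG$. By hypothesis there is a holomorphic reduction $E_{\rG_0}$ of $E$ to $\rG_0$ with $\varphi\in H^0(E_{\rG_0}(\fg_k)\otimes K)$. Since $\fg_0\subset\tilde\fg$ we have $\rG_0\subset\tilde\rG$, and extension of structure group yields a $\tilde\rG$-bundle $E_{\tilde\rG}:=E_{\rG_0}(\tilde\rG)$ that is genuinely a reduction of $E$, because the natural extension maps factor as $E_{\rG_0}(\rG)\cong E_{\tilde\rG}(\rG)\cong E$. The subspace $\fg_k\subset\tilde\fg$ is $\Ad(\tilde\rG)$-invariant (being the $j=1$ piece of the $\tilde\fg$-grading), and the inclusion of $\rG_0$-modules $\fg_k\hookrightarrow\tilde\fg$ induces an inclusion of associated bundles $E_{\rG_0}(\fg_k)\hookrightarrow E_{\rG_0}(\tilde\fg)=E_{\tilde\rG}(\tilde\fg)$. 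Hence $\varphi\in H^0(E_{\tilde\rG}(\tilde\fg)\otimes K)$, which by Definition \ref{def higgs pair reduction} exhibits $(E,\varphi)$ as a reduction of the $\tilde\rG$-Higgs bundle $(E_{\tilde\rG},\varphi)$.

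Next I would verify that $(E_{\tilde\rG},\varphi)$ is a Hodge bundle of type $(\rG_0,\tilde\fg_1)$ in the sense of Definition \ref{def hodge bundle} applied to $\tilde\rG$. The grading $\tilde\fg=\bigoplus_{j\in\Z}\fg_{jk}$ has grading element $\tilde\zeta=\zeta/k$ (since $[\zeta/k,x]=jx$ for $x\in\fg_{jk}$) and $\tilde\fg_1=\fg_k$. The centralizer of $\tilde\zeta$ in $\tilde\rG$ is precisely $\rG_0$: on one hand $\rG_0$, being the $\rG$-centralizer of $\zeta$, sits inside $\tilde\rG$ and centralizes $\tilde\zeta=\zeta/k$; on the other hand any element of $\tilde\rG$ centralizing $\tilde\zeta$ also centralizes $\zeta$ and therefore lies in $\rG_0$. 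So $E_{\rG_0}$, now regarded as a reduction of $E_{\tilde\rG}$ via $\rG_0\subset\tilde\rG$, together with $\varphi\in H^0(E_{\rG_0}(\tilde\fg_1)\otimes K)=H^0(E_{\rG_0}(\fg_k)\otimes K)$, realizes $(E_{\tilde\rG},\varphi)$ as a Hodge bundle of type $(\rG_0,\tilde\fg_1)$.

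The statement is essentially tautological given the definitions, which is why the authors call it immediate; no real obstacle arises. The only point that deserves being spelled out—and the only one I would take care with—is that $\rG_0$ genuinely is the Levi associated to the $\tilde\fg$-grading inside $\tilde\rG$, so that the same $\rG_0$-reduction plays the dual role of witnessing the Hodge structure for both the original $(\rG_0,\fg_k)$-type and the new $(\rG_0,\tilde\fg_1)$-type.
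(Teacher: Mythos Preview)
Your unpacking is correct and is precisely the argument the paper has in mind; the authors give no proof beyond calling the statement ``immediate,'' and your verification that $\rG_0$ is the centralizer of $\tilde\zeta=\zeta/k$ in $\tilde\rG$ is the only point that deserves a sentence. One slip to fix: the claim that $\fg_k\subset\tilde\fg$ is $\Ad(\tilde\rG)$-invariant is false (take $k=1$, where $\tilde\rG=\rG$ certainly does not preserve $\fg_1$), but you never actually use it---the inclusion of associated bundles you need follows from $\rG_0$-equivariance of $\fg_k\hookrightarrow\tilde\fg$, which you state correctly in the same sentence. Just delete the parenthetical and the $\Ad(\tilde\rG)$ clause.
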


As a result, we will usually consider Hodge bundles of type $(\rG_0,\fg_1)$. Recall from Proposition \ref{prop hodge real form of Zgrad}, that there is a canonical real form $\rG^\R<\rG$ of Hodge type associated to a $\Z$-grading. The complexified Cartan decomposition $\fg=\fh\oplus\fm$ satisfies $\fg_{2j}\subset\fh$ and $\fg_{2j+1}\subset\fm,$ in particular $\rG_0<\rH.$ 
As a result, a $\rG$-Higgs bundle which is a Hodge bundle of type $(\rG_0,\fg_1)$ reduces to a $\rG^\R$-Higgs bundle. Combining this observation with Proposition \ref{prop G0g1 type reduction} gives the following result which was first observed by Simpson \cite[\S4]{localsystems}.

\begin{proposition}
Let $(E,\varphi)$ be a $\rG$-Higgs bundle which is a Hodge bundle of type $(\rG_0,\fg_k)$. Then $(E,\varphi)$-reduces to a $\tilde\rG^\R$-Higgs bundle, where $\tilde\rG^\R<\rG$ is a real group of Hodge type. In fact, $\tilde\rG^\R$ is the real form of the subgroup $\tilde\rG<\rG$ from Proposition \ref{prop G0g1 type reduction} associated to the grading of $\tilde\fg.$
\end{proposition}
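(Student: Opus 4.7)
The plan is to chain together the two previous results in the paper and then observe that the canonical involution of Proposition \ref{prop hodge real form of Zgrad}, applied to the subalgebra $\tilde\fg$, automatically places the Higgs field in the $-1$-eigenspace of the Cartan involution.

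First I would invoke Proposition \ref{prop G0g1 type reduction}: the Hodge bundle $(E,\varphi)$ of type $(\rG_0,\fg_k)$ reduces to a $\tilde\rG$-Higgs bundle (where $\tilde\rG<\rG$ is the reductive subgroup defined by the subalgebra $\tilde\fg=\bigoplus_j \fg_{jk}$), and as a $\tilde\rG$-Higgs bundle it is a Hodge bundle of type $(\rG_0,\tilde\fg_1)$, with $\tilde\fg_1=\fg_k$. Thus there is a holomorphic reduction $E_{\rG_0}\subset E$ of the structure group to $\rG_0<\tilde\rG$ with $\varphi\in H^0(E_{\rG_0}(\tilde\fg_1)\otimes K)$.

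Second, I would apply Proposition \ref{prop hodge real form of Zgrad} to the $\Z$-grading $\tilde\fg=\bigoplus_j \tilde\fg_j$ to obtain the involution $\tilde\theta$ acting by $(-1)^j$ on $\tilde\fg_j$, and hence the real form $\tilde\rG^\R<\tilde\rG$ of Hodge type. Writing the complexified Cartan decomposition as $\tilde\fg=\tilde\fh\oplus\tilde\fm$ (the $+1$ and $-1$ eigenspaces of $\tilde\theta$), we have $\tilde\fg_{2j}\subset\tilde\fh$ and $\tilde\fg_{2j+1}\subset\tilde\fm$. In particular $\rG_0=\tilde\fg_0\cap\tilde\rG\subset\tilde\rH$ (the complexification of a maximal compact of $\tilde\rG^\R$) and $\tilde\fg_1\subset\tilde\fm$.

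Third, I would conclude by observing that the reduction $E_{\rG_0}$ to $\rG_0$ automatically furnishes a reduction of $E$ to $\tilde\rH$ by extension of structure group along $\rG_0\hookrightarrow\tilde\rH$; under this reduction $E_{\rG_0}(\tilde\fg_1)\subset E_{\tilde\rH}(\tilde\fm)$, so $\varphi\in H^0(E_{\tilde\rH}(\tilde\fm)\otimes K)$. By Definition \ref{def higgs pair reduction}, this is exactly the statement that $(E,\varphi)$ reduces to a $\tilde\rG^\R$-Higgs bundle.

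There is essentially no obstacle: the only nontrivial content is the compatibility $\rG_0\subset\tilde\rH$ and $\tilde\fg_1\subset\tilde\fm$, both of which are immediate from the parity of the canonical involution $\tilde\theta$ of Proposition \ref{prop hodge real form of Zgrad}. The statement is really a packaging of Propositions \ref{prop G0g1 type reduction} and \ref{prop hodge real form of Zgrad}, with the key qualitative observation being Simpson's remark that odd-weight graded pieces always sit in the Cartan $-1$-eigenspace of the canonical Hodge involution.
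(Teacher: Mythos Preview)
Your proposal is correct and follows exactly the approach the paper takes: the paper does not give a separate proof but simply states the result as ``combining this observation with Proposition \ref{prop G0g1 type reduction},'' where ``this observation'' is precisely the parity argument $\fg_{2j}\subset\fh$, $\fg_{2j+1}\subset\fm$, hence $\rG_0<\rH$, that you spell out in your second and third steps. You have merely written out the details the paper leaves implicit.
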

 \begin{example}
   The Hodge bundles in Example \ref{ex sl2 uniformizing}, thus reduce to the real forms $\rPSL_2\R<\rPSL_2\C$ and $\rSL_2\R<\rSL_2\C.$
 \end{example}

The above results also apply to $\rG^\R$-Higgs bundles. Namely, fix a real form $\rG^\R<\rG$ with a maximal compact subgroup $\rH^\R<\rG^\R$. Let $\rH<\rG$ be the complexification of $\rH^\R$ and $\fg=\fh\oplus\fm$ be a complexified Cartan decomposition. 
Consider a $\Z$-grading of $\fg$ given by $\fg=\bigoplus_{j\in\Z}\fh_j\oplus\fm_j$ with grading element $\zeta\in\fh_0.$ Let $\rH_0<\rH$ be the centralizer of $\zeta.$ 
Given an $(\rH_0,\fm_k)$-Higgs pair $(E_{\rH_0},\varphi)$, extending the structure group to $\rH$ defines a $\rG^\R$-Higgs bundle $(E_{\rH_0}(\rH),\varphi)$ such that $(E_{\rH_0}(\rH),\varphi)\cong (E_{\rH_0}(\rH),\lambda\varphi)$ for all $\lambda\in\C^*$

A $\rG^\R$-Higgs bundle $(E,\varphi)$ is called a {\bf Hodge bundle of type $(\rH_0,\fm_k)$} if it reduces to a $(\rH_0,\fm_k)$-Higgs pair. Consider the subalgebra $\tilde\fg=\tilde\fh\oplus\tilde\fm$ given by 
\[\xymatrix{\tilde\fh=\bigoplus\limits_{j=0\mod 2k}\fh_j&\text{and}&\tilde\fm=\bigoplus\limits_{j=k\mod 2k}\fm_j}.\]
Let $\tilde\fg^\R\subset\tilde\fg$ be the associated real form of Hodge type with complexified Cartan decomposition $\tilde\fg=\tilde\fh\oplus\tilde\fm.$ As in the complex case, we have the following result. 
\begin{proposition}
  Let $(E,\varphi)$ be a $\rG^\R$-Higgs bundle which is a Hodge bundle of type $(\rH_0,\fm_k).$ Then there is a reductive subgroup $\tilde\rG^\R<\rG^\R$ of Hodge type such that $(E,\varphi)$ reduces to a $\tilde\rG^\R$-Higgs bundle. Moreover, the resulting $\tilde\rG^\R$-Higgs bundle is a $(\rH_0,\tilde\fm_1)=(\tilde\rG_0,\tilde\fg_1)$-Hodge bundle.
\end{proposition}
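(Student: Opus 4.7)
The plan is to imitate the complex-group version of the proposition proved immediately above. I would construct the subalgebra $\tilde\fg$, equip it with a $\Z$-grading whose weight-one piece is $\fm_k$, invoke Proposition \ref{prop hodge real form of Zgrad} to obtain the Hodge-type real form $\tilde\fg^\R$, and then reduce $(E,\varphi)$ along the inclusion $\rH_0\hookrightarrow\tilde\rH$, where $\tilde\rH$ is the connected subgroup with Lie algebra $\tilde\fh$.

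First I would verify that $\tilde\fg=\tilde\fh\oplus\tilde\fm$ is a Lie subalgebra. This combines the grading relations $[\fg_i,\fg_j]\subset\fg_{i+j}$ with the Cartan relations $[\fh,\fh]\subset\fh$, $[\fh,\fm]\subset\fm$, $[\fm,\fm]\subset\fh$: the three possibilities for residues modulo $2k$ close up since $0+0\equiv 0$, $0+k\equiv k$, and $k+k\equiv 0$. Reductivity of $\tilde\fg$ I would deduce by exhibiting it as the fixed subalgebra of the finite-order automorphism $\psi=\theta_C\circ\Ad(\exp(i\pi\zeta/k))$, where $\theta_C$ is the Cartan involution of the ambient real form $\fg^\R$. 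These two commute because $\zeta\in\fh_0$, and computing $\psi$ on $\fh_j$ (resp.\ $\fm_j$) gives $e^{i\pi j/k}$ (resp.\ $-e^{i\pi j/k}$), so its fixed locus is exactly $\tilde\fh\oplus\tilde\fm$.

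Next I would equip $\tilde\fg$ with the $\Z$-grading $\tilde\fg_{2n}=\fh_{2nk}$ and $\tilde\fg_{2n+1}=\fm_{(2n+1)k}$, whose grading element is $\zeta/k\in\fh_0$ and whose weight-one piece is $\tilde\fg_1=\fm_k$. Proposition \ref{prop hodge real form of Zgrad} then produces the canonical Hodge-type real form $\tilde\fg^\R$ of $\tilde\fg$. To see $\tilde\rG^\R<\rG^\R$, I would observe that both $\theta_C$ and the compact conjugation $\tau$ (which sends $\fg_j$ to $\fg_{-j}$ by Remark \ref{rem compact inv and Z-grading}) preserve $\tilde\fg$; hence $\tilde\fg^\R=\tilde\fg\cap\fg^\R$, and the Cartan decomposition of this real form is precisely $\tilde\fh\oplus\tilde\fm$. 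For the reduction of the Higgs bundle, by hypothesis there is a holomorphic reduction $E_{\rH_0}\subset E$ with $\varphi\in H^0(E_{\rH_0}(\fm_k)\otimes K)$. Since $\fh_0\subset\tilde\fh$, the group $\rH_0$ sits inside $\tilde\rH$, which is the complexified maximal compact of $\tilde\rG^\R$; extending structure group yields a reduction $E_{\tilde\rH}:=E_{\rH_0}\times_{\rH_0}\tilde\rH\subset E$, and the inclusion $\fm_k=\tilde\fm_1\subset\tilde\fm$ places $\varphi$ in $H^0(E_{\tilde\rH}(\tilde\fm)\otimes K)$. Finally, the $\tilde\rG$-centralizer of $\zeta/k$ has Lie algebra $\tilde\fg_0=\fh_0$, so $\tilde\rG_0=\rH_0$, and $\tilde\fg_1=\tilde\fm_1=\fm_k$, giving the Hodge bundle type $(\rH_0,\tilde\fm_1)=(\tilde\rG_0,\tilde\fg_1)$.

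The one step that warrants genuine care is the identification of the intrinsic Hodge-type real form $\tilde\fg^\R$ of $\tilde\fg$ (built from its own grading) with the restriction $\tilde\fg\cap\fg^\R$; a priori the Cartan involution of $\fg^\R$ sees the full $\fh/\fm$ decomposition while the Hodge-type involution on $\tilde\fg$ is dictated by parity in the new grading. The check reduces to observing that the even-grade (resp.\ odd-grade) pieces $\tilde\fg_{2n}=\fh_{2nk}$ (resp.\ $\tilde\fg_{2n+1}=\fm_{(2n+1)k}$) lie in $\fh$ (resp.\ $\fm$), which is immediate from the definitions. Apart from this bookkeeping, the argument is exactly parallel to the complex-group case proved above.
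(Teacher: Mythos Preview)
Your proposal is correct and follows precisely the approach the paper indicates: the paper gives no explicit proof, merely constructs $\tilde\fg=\tilde\fh\oplus\tilde\fm$ and asserts ``as in the complex case, we have the following result,'' and you have faithfully carried out that parallel argument with all details supplied. Your extra care in verifying that the intrinsic Hodge-type real form of $\tilde\fg$ coincides with $\tilde\fg\cap\fg^\R$, and in exhibiting $\tilde\fg$ as the fixed locus of the finite-order automorphism $\theta_C\circ\Ad(\exp(i\pi\zeta/k))$ to obtain reductivity, goes beyond what the paper spells out but is exactly what is needed to make the sketch rigorous.
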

As a result, we will mostly consider Hodge bundles of type $(\rG_0,\fg_1)$.

\subsection{Moduli spaces and fixed points}
\label{sec moduli fixed}
To form a moduli space of Higgs pairs, we need to define suitable notions of stability. We describe this below and refer to \cite{HiggsPairsSTABILITY,RelKHpairscorresp} for more details. Let $\rG$ be a complex reductive Lie group and $\rho:\rG\to\rGL(V)$ be a holomorphic representation. Fix a maximal compact subgroup $\rK^\R<\rG$ and let $\fk^\R$ be its Lie algebra. An element $s\in i\fk^\R$ defines subspaces of $V$ via the representation $\rho$
\[\xymatrix@=1em{V_s^0=\{v\in V~|\rho(e^{ts})v= v \}&\text{and}&V_s=\{v\in V~|~\rho(e^{ts})(v) \text{ is bounded as $t\to\infty$}\}}.\]
When $\rho:\rG\to\rGL(\fg)$ is the adjoint representation, $\fg_s=\fp_s\subset\fg$ is a parabolic subalgebra with Levi subalgebra $\fg_s^0=\fl_s\subset\fp_s.$ The associated subgroups $\rL_s<\rP_s$ are given by
\[\xymatrix@=1em{\rL_s=\{g\in\rG~|~\Ad(g)s=s\}&\text{and}&\rP_s=\{g\in\rG~|~\Ad(e^{ts})(g)\ \text{is bounded as $t\to\infty$}\}}.\]
Moreover, $s$ defines a character $\chi_s:\fp_s\to\C$ by
\[\chi_s(x)=\langle s,x\rangle \text{ for $x\in\fp_s$.}\]

Let $E$ be a $\rG$-bundle, $s\in i\fk^\R$ and $\rP_s<\rG$ be the associated parabolic subgroup. A reduction of structure group of $E$ to $\rP_s$ is a $\rP_s$-subbundle $E_{\rP_s}\subset E$, this is equivalent to a section $\sigma\in\Gamma(E(\rG/\rP_s))$ of the associated bundle. We will denote the associated $\rP_s$-subbundle by $E_\sigma.$ The degree of such a reduction will be defined using Chern-Weil theory. Since $\rP_s$ is homotopy equivalent to the maximal compact $\rK_s^\R=\rK^\R\cap\rL_s$ of $\rL_s$, given a reduction of structure $\sigma$ of $E$ to $\rP_s$, there is a further reduction of $E$ to $\rK_s^\R$ which is unique up to homotopy. Let $E_{\sigma'}\subset E$ be the resulting $\rK^\R_s$ principal bundle. The curvature $F_A$ of a connection $A$ on $E_{\sigma'}$ satisfies $F_A\in\Omega^2(X,E_{\sigma'}(\fk_s^\R)).$ Thus, evaluating the character $\chi_s$ on the curvature  we have $\chi_s(F_A)\in\Omega(X,i\R),$ and we define the degree of $\sigma$ as
\begin{equation}
   \label{eq deg of parabolic reduction}\deg E(\sigma,s)=\frac{i}{2\pi}\int_X\chi_s(F_A).
 \end{equation} 

 If a multiple of $q\cdot\chi_s$ lifts to a character $\tilde\chi_s:\rP_s\to\C^*$ and $\sigma\in\Gamma(E(\rG/\rP_s))$ is a reduction, then $E_{\sigma}\times_{\chi_s}\C^*=E_{\sigma}(\tilde\chi_s)$ is a line bundle and $\deg E(\sigma,s)=\frac{1}{q}\deg E(\tilde\chi_s).$
When $s$ is in the center $\fz$ of $\fg$ then $\rP_s=\rG$. In this case, the degree given in \eqref{eq deg of parabolic reduction} is simply the degree of $E$ with respect to $\chi_s,$  and will be denoted $\deg_\chi(E)$. Again, if a multiple $q\cdot\chi_s$ lifts to a character $\tilde \chi:\rG\to\C^*$ we have 
\begin{equation}
  \label{eq deg of central chacacter} 
  \deg_\chi(E)=\frac{1}{q}\deg E(\tilde\chi).
\end{equation}

Let $d\rho:\fg\to\fgl(V)$ be the differential of $\rho$ and let $\fz^\R$ be the center of $\fk^\R$ and $\fk^\R=\fk_{ss}^\R=\fz^\perp$. Define 
\[\fk^\R_\rho=\fk^\R_{ss}\oplus (\ker(d\rho|_{\fz^\R}))^\perp.\]
We are now ready to define $\alpha$-stability notions for $\alpha\in i\fz^\R.$
\begin{definition}\label{def: stability}
Let $\alpha\in i\fz^\R$. A $(\rG,V)$-Higgs pair $(E,\varphi)$ is:
\begin{itemize}
  \item \emph{$\alpha$-semistable} if for any $s\in i\fk^\R$ 
   and any holomorphic reduction $\sigma\in H^0(E(\rG/\rP_s))$ such that  $\varphi\in H^0(E_\sigma(V_s)\otimes K)$, we have $\deg E(\sigma,s)\geq \langle\alpha,s\rangle$.

\item \emph{$\alpha$-stable} if it is $\alpha$-semistable and for any $s\in i\fk^\R_\rho$ and any holomorphic reduction $\sigma\in H^0(E(\rG/\rP_s)$ such that  $\varphi\in H^0(E_\sigma(V_s)\otimes K)$, we have $\deg E(\sigma,s)> \langle\alpha,s\rangle$.
 
\item \emph{$\alpha$-polystable} if it is $\alpha$-semistable and whenever $s\in i\fk^\R$ and $\sigma\in H^0(E(\rG/\rP_s))$  satisfy $\varphi\in H^0(E_\sigma(V_s)\otimes K)$ and $\deg E(\sigma,s)=\langle\alpha,s\rangle$,
 there is a further holomorphic reduction $\sigma'\in H^0(E_\sigma(\rP_s/\rL_s))$ such that $\varphi\in H^0(E_{\sigma'}(V_s^0)\otimes K)$.
\end{itemize}
\end{definition}

The {\bf moduli space of $\alpha$-polystable $(\rG,V)$-Higgs pairs} over $X$ is defined as the set of isomorphism classes of $\alpha$-polystable $(\rG,V)$-Higgs pairs and will be denoted by $\cM^\alpha(\rG,V).$ A GIT construction of these spaces is given by Schmitt in \cite{schmittGITbook} and by Simpson for the moduli space of $0$-polystable $\rG$-Higgs bundles \cite{SimpsonModuli1}.
\begin{remark}
When $\alpha=0$, we refer to $0$-stability simply as stability (similarly for (semi, pol)stability), and denote the moduli space by $\cM(\rG,V).$ 
The {\bf moduli space of polystable $\rG$-Higgs bundles} will be denoted by $\cM(\rG)$, and, for a real form $\rG^\R<\rG,$ the {\bf moduli space of polystable $\rG^\R$-Higgs bundles} will be denoted by $\cM(\rG^\R).$
\end{remark}

There is a natural $\C^*$-action on the moduli spaces of $\alpha$-polystable Higgs pairs given by $\lambda\cdot(E,\varphi)=(E,\lambda\varphi).$ If $\fg=\bigoplus_{j\in\Z}\fg_j$ is a $\Z$-grading, then the $\C^*$-action is trivial on the moduli space of $(\rG_0,\fg_k)$-Higgs pairs by \eqref{eq iso of G0gk pairs}. 
As a result, polystable Higgs bundles which are Hodge bundles define fixed points of the $\C^*$-action on the moduli space of Higgs bundles. Simpson proved the converse \cite{SimpsonVHS,localsystems}. Namely, all $\C^*$-fixed points in the moduli space of Higgs bundles are Hodge bundles. 
In fact, if a Higgs bundle is a Hodge bundle, then polystability of the Higgs bundle is equivalent to polystability of the associated pair. To explain why this is true, we need to use the correspondence between stability and solutions to gauge theoretic equations. 

We first describe this for $\rG^\R$-Higgs bundles. Fix a maximal compact subgroup $\rK^\R<\rG$ and let $\tau:\fg\to\fg$ be the resulting conjugate linear involution. Now fix a real form $\rG^\R<\rG$ such that $\rK^\R\cap\rG^\R=\rH^\R$ is a maximal compact subgroup of $\rG^\R$ and let $\rH<\rG$ be the complexification of $\rH^\R.$ Let $\fg=\fh\oplus\fm$ be the complexified Cartan decomposition of the real form.  
Consider a $\rG^\R$-Higgs bundle $(E,\varphi)$. A metric on $E$ is by definition a structure group reduction $h\in \Gamma(E(\rH/\rH^\R))$ of $E$ to $\rH^\R.$ Associated to a metric $h$, there is a unique connection $A_h$ (the Chern connection) which is compatible with the reduction and the holomorphic structure. Let $E_h$ be the $\rH^\R$-subbundle associated to a metric h. Then $E(\fm)$ is canonically identified with $E_h(\fm)$.
The compact real form $\tau$ defines an anti-holomorphic involution of $E_h(\fm)$. Combining this with conjugation of 1-forms defines an involution 
\[\tau:\Omega^{0,1}(E_h(\fm))\to\Omega^{0,1}(E_h(\fm)).\]
The Higgs field $\varphi\in H^0(E(\fm)\otimes K)$ defines a $(1,0)$-form valued in $E(\fm)$. Thus $[\varphi,-\tau(\varphi)]$ defines a 2-form valued in $E_h[\fh^\R]$, and we can make sense of the following equation 
\begin{equation}\label{eq Hitchin eq}
  F_h+[\varphi,-\tau(\varphi)]=0,
\end{equation}
where $F_h$ is the curvature of the Chern connection. 
These are the {\bf Hitchin equations}. The following theorem relates solutions of the Hitchin equations with stability; it was proven by Hitchin \cite{selfduality} for $\rSL_2\C$ and by Simpson \cite{SimpsonVHS} for complex reductive Lie groups, see \cite{HiggsPairsSTABILITY} for the general statement. 
\begin{theorem}\label{thm kh corr Higgs}
   A $\rG^\R$-Higgs bundle $(E,\varphi)$ is polystable if and only if there a metric $h$ on $E$ which solves the Hitchin equations \eqref{eq Hitchin eq}. 
 \end{theorem}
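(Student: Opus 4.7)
The plan is to prove the two implications separately. The easy direction—that a solution of \eqref{eq Hitchin eq} implies polystability—reduces to a Chern--Weil computation, while the converse is an analytic statement established via the Donaldson functional or a parabolic flow.

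For the easy direction, suppose $h$ solves $F_h+[\varphi,-\tau(\varphi)]=0$. Let $s\in i\fk^\R$ and let $\sigma$ be a holomorphic reduction of $E$ to the parabolic $\rP_s<\rH$ with $\varphi\in H^0(E_\sigma(V_s)\otimes K)$ and $V_s\subset\fm$. Combining $h$ with $\sigma$ produces a smooth reduction to the maximal compact $\rK_s^\R<\rL_s$, and a standard second-fundamental-form identity gives
\[\deg E(\sigma,s)=\frac{i}{2\pi}\int_X\chi_s(F_h)+\frac{1}{2\pi}\int_X|\beta|^2\,d\vol_X,\]
where $\beta$ is the second fundamental form. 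Substituting the Hitchin equation yields $\chi_s(F_h)=-\chi_s([\varphi,-\tau(\varphi)])$, and one checks algebraically that this pairing is a pointwise nonnegative multiple of the squared norm of the component of $\varphi$ not lying in $V_s^0$. Hence $\deg E(\sigma,s)\geq 0$, with equality forcing $\beta=0$ and $\varphi\in V_s^0$, which is precisely the further reduction to $\rL_s$ required for polystability.

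For the hard direction, I would use the Donaldson functional on the space of Hermitian metrics on $E$, enriched with the Higgs term $|\varphi|^2_h$, whose Euler--Lagrange equation is exactly \eqref{eq Hitchin eq} and which is convex along geodesics in the space of metrics. The goal is to show that polystability implies the existence of a minimizer. Run the associated gradient flow from a reference metric $h_0$; standard parabolic theory yields long-time existence. For convergence, argue by contradiction: if the flow diverges, Uhlenbeck--Yau-type compactness applied to the rescaled endomorphisms $h_t^{-1}h_0$ extracts a weak limit whose spectral projections yield a holomorphic parabolic reduction $\sigma$ adapted to $\varphi$ with $\deg E(\sigma,s)<0$, contradicting stability. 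In the strictly polystable but not stable case, one first splits $E$ along an $s$ achieving equality in the stability inequality, applies the stable case to each piece, and assembles the solutions using reductivity of the automorphism group.

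The main obstacle will be the analytic step of extracting a stability-violating reduction from a divergent flow: this requires Uhlenbeck compactness combined with $L^p$ estimates for $h_t^{-1}h_0$ and a Mehta--Ramanathan-type argument to upgrade the analytic weak limit to an algebraic subbundle whose Higgs structure is preserved. Once the theorem is established in the general $(\rG,V)$-Higgs pair framework of \cite{HiggsPairsSTABILITY}, the $\rG^\R$ case follows by specializing to the isotropy representation $\rH\to\rGL(\fm)$ and observing that the compact conjugation $\tau$ restricts to an $\rH$-equivariant involution on $\fm$ under the complexified Cartan decomposition $\fg=\fh\oplus\fm$.
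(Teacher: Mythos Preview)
The paper does not prove this theorem. It is stated as a known result and attributed to the literature: Hitchin for $\rSL_2\C$, Simpson for complex reductive groups, and \cite{HiggsPairsSTABILITY} for the general $\rG^\R$ statement. There is no proof in the paper to compare your proposal against.

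Your outline is a reasonable sketch of how the cited proofs actually go---the easy direction via a Chern--Weil and second-fundamental-form computation, and the hard direction via a Donaldson-type functional or heat flow plus an Uhlenbeck--Yau destabilizing-subobject argument. But since the paper treats Theorem~\ref{thm kh corr Higgs} purely as a black box, the appropriate ``proof'' here is simply a citation, not the analytic argument you have sketched.
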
 
\begin{remark}\label{rem flat metric conn}
When the group $\rG^\R$ is compact, then the Hitchin equations are just $F_h=0.$ That is, there is a flat metric on the bundle, and the classical results of Narasimhan-Seshadri \cite{NarasimhanSeshadri} and Ramanathan \cite{ramanathan_1975}.
\end{remark}

For general $(\rG,V)$-Higgs pairs, we fix a maximal compact $\rK^\R<\rG$ and an $\rK^\R$ invariant Hermitian inner-product on V. Now the action of $\rK^\R$ on $V$ is Hamiltonian and has an associated moment map $\mu:V\to(\fk^\R)^*\to\fk^\R$, where we use the inner-product on $\fk^\R$ to identify $(\fk^\R)^*$ with $\fk^\R$. If $\varphi\in \Omega^{1,0}(E(V)),$ then one can define a bundle version of the moment map so that $\mu(\varphi)\in\Omega^{1,1}(E(\fk^\R))$, and polystability of $(\rG,V)$-Higgs pairs is equivalent the existence of a metric $h$ solving the equation
$F_h+\mu(\varphi)=0$ (see \cite{HiggsPairsSTABILITY} for more details).
For $\rG$-Higgs bundles, we fix a compact real form $\rK^\R<\rG$ and let $\tau:\fg\to\fg$ the associated involution. The Hermitian inner-product on $\fg$ is given by $B(\cdot,-\tau(\cdot))$ and the moment map $\mu_\rG:\fg\to\fk^\R$ is given by $\mu_\rG(x)=[x,-\tau(x)]$.

For $(\rG_0,\fg_k)$-Higgs pairs, we choose a compact real form $\rK^\R<\rG$ such that $\tau|_{\fg_0}=\Id.$ Recall from Remark \ref{rem compact inv and Z-grading}, that $\tau(\fg_j)=\fg_{-j}$ for all $j.$ As a result, $\rK_0^\R=\rG_0\cap\rK^\R$ is a compact form of $\rG_0$ and $B(x,-\tau(y))$ defines a $\rK_0^\R$-invariant Hermitian form on $\fg_k.$ 
The moment map $\mu_{\rG_0}:\fg_k\to\fk_0^\R$ is then given by restricting the moment map $\mu_\rG:\fg\to\fk^\R$ and orthogonally projecting onto $\fk_0^\R$. That is, $\mu_{\rG_0}(x)$ is the orthogonal projection of $[x,-\tau(x)]$ onto $\fk_0^\R.$ 
But for $x\in\fg_k,$ orthogonal projection is unnecessary since $[x,-\tau(x)]\in\fk_0^\R$.
Thus, a $(\rG_0,\fg_k)$-Higgs pair $(E,\varphi)$ is polystable if and only if there is a metric $h\in \Gamma(E/\rK_0^\R)$ so that $F_h+[\varphi,-\tau(\varphi)]=0.$ Such a metric solves the Hitchin equations \eqref{eq Hitchin eq} for the associated $\rG$-Higgs bundle $(E(\rG),\varphi).$ 
Conversely, using an averaging argument, Simpson \cite{SimpsonVHS} showed that, if $(E,\varphi)$ is a polystable $\rG$-Higgs bundle which is a Hodge bundle, then the metric solving the Hitchin equations is compatible with the Hodge bundle reduction. We summarize this in a proposition. 

\begin{proposition}
 A $(\rG_0,\fg_k)$-Higgs pair $(E,\varphi)$ is polystable as a $(\rG_0,\fg_k)$-Higgs pair if and only if the associated $\rG$-Higgs bundle $(E(\rG),\varphi)$ is polystable as a $\rG$-Higgs bundle. In particular, there is a well defined map of moduli spaces
 \[\cM(\rG_0,\fg_k)\to\cM(\rG)\]
whose image consists of $\C^*$-fixed points, and every $\C^*$-fixed point in $\cM(\rG)$ is in the image of such a map for some $(\rG_0,\fg_k).$
\end{proposition}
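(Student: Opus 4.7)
The plan is to deduce the proposition from the Hitchin--Kobayashi correspondence (Theorem~\ref{thm kh corr Higgs}) applied simultaneously to $(\rG_0,\fg_k)$-Higgs pairs and to $\rG$-Higgs bundles, since on both sides polystability is equivalent to the existence of a Hermitian metric solving a Hitchin-type equation built from the \emph{same} bracket $[\varphi,-\tau(\varphi)]$. Concretely, pick a compact real form $\tau$ of $\fg$ with $\tau|_{\fg_0}=\Id$; then (as noted just before the statement) a metric $h\in\Gamma(E/\rK_0^\R)$ satisfies $F_h+[\varphi,-\tau(\varphi)]=0$ if and only if, after extending the structure group to $\rK^\R$, it satisfies the Hitchin equation for the $\rG$-Higgs bundle $(E(\rG),\varphi)$. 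This instantly yields the easy direction: a solution for the Hodge pair induces a solution for the ambient $\rG$-Higgs bundle.

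For the harder converse, assume $(E(\rG),\varphi)$ is polystable and pick a metric $h'$ on $E(\rG)$ solving the $\rG$-Hitchin equation. The task is to improve $h'$ to a metric compatible with the $\rG_0$-reduction coming from the Hodge structure. Here I would invoke Simpson's averaging trick: the $U(1)$-subgroup $\{\exp(\tfrac{i\theta}{k}\zeta)\}$ lies in the center of $\rG_0$, acts trivially on $E$ as $\rG_0$-gauge transformations, and acts on $\varphi$ by $e^{i\theta}$, so via the isomorphisms \eqref{eq iso of G0gk pairs} it extends to a $U(1)$-action on $(E(\rG),\varphi)$ by $\rG$-gauge transformations. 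Averaging $h'$ over this compact group produces a new solution of the Hitchin equation which is $U(1)$-invariant, and the weight decomposition of a $U(1)$-invariant reduction of $E(\rG)$ to $\rK^\R$ forces the reduction to factor through $\rK_0^\R\subset \rK^\R$. This gives the sought metric on the $\rG_0$-reduction, proving polystability of the pair.

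The existence and well-definedness of the map $\cM(\rG_0,\fg_k)\to\cM(\rG)$ follows from the equivalence of polystability together with the fact that an $S$-equivalence on the Hodge side descends to an $S$-equivalence on the $\rG$ side (both notions are governed by the same Jordan--H\"older filtration under the correspondence above). The image lies in $(\cM(\rG))^{\C^*}$ because of the isomorphism \eqref{eq iso of G0gk pairs}: the central element $\exp(\tfrac{\lambda}{k}\zeta)\in\rG_0\subset\rG$ provides, for every $\lambda\in\C^*$, an explicit isomorphism between $(E(\rG),\varphi)$ and $(E(\rG),\lambda\varphi)$.

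Finally, for surjectivity onto $\C^*$-fixed points I would argue as follows. If $[(E,\varphi)]\in\cM(\rG)$ is fixed by $\C^*$, then for every $\lambda$ there is a holomorphic gauge transformation $g_\lambda$ with $g_\lambda\cdot\varphi=\lambda\varphi$; restricting to $\lambda=e^{i\theta}\in U(1)$ and using polystability one may arrange $\theta\mapsto g_{e^{i\theta}}$ to be a genuine $U(1)$-action on the underlying harmonic bundle (this is the delicate step where one needs to lift the projective action to a linear one, possibly after passing to a finite cover induced by choosing $k$). The infinitesimal generator of this action integrates to a global holomorphic section $\zeta$ of $\End E$ whose eigenspace decomposition supplies the $\Z$-grading $\fg=\bigoplus_j\fg_j$, the centralizer $\rG_0$, and a reduction of $E$ to $\rG_0$ relative to which $\varphi$ is a section of $E_{\rG_0}(\fg_k)\otimes K$ for the $k$ detected by the action. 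The main obstacle I anticipate is this last lifting step, together with the Simpson averaging argument in the converse direction, both of which require controlling how the harmonic metric interacts with the center of $\rG_0$; the rest is a bookkeeping translation between the Hodge-pair Hitchin equation and the $\rG$-Higgs equation.
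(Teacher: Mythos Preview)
Your approach matches the paper's: the proposition is stated there as a summary of the preceding paragraph, which reduces both polystability notions to the same equation $F_h+[\varphi,-\tau(\varphi)]=0$ via the Hitchin--Kobayashi correspondence (the key observation being that for $x\in\fg_k$ one has $[x,-\tau(x)]\in\fk_0^\R$ already, so the $(\rG_0,\fg_k)$ moment map is the restriction of the $\rG$ one with no projection needed), and then defers the converse direction and the surjectivity onto $\C^*$-fixed points to Simpson's work, exactly as you do.

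One point to sharpen: your sentence ``averaging $h'$ over this compact group produces a new solution of the Hitchin equation'' is not literally correct, since the equation is nonlinear in $h$ and averaging metrics does not preserve solutions. Simpson's actual mechanism is that the $U(1)$-action pulls back the harmonic metric to another harmonic metric for the same polystable object, and then uniqueness of the harmonic metric (equivalently, strict convexity of the Donaldson functional along geodesics in the space of metrics, modulo the automorphism group) forces the original solution to be $U(1)$-invariant; no averaging of metrics is performed. You correctly flag this step as the delicate one, and the paper likewise does not spell it out but simply cites Simpson, so this is a refinement rather than a genuine gap in your outline.
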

\begin{remark}
For stable and simple  Higgs bundles, the associated type of Hodge bundle is unique.
\end{remark}
\begin{remark}
 Note that for gradings $\fg=\fg_{-1}\oplus\fg_0\oplus\fg_1$, $(\rG_0,\fg_{-1}\oplus\fg_1)$-pair stability agrees with Higgs bundle stability. 
This is not true for other gradings. 
 Namely, $(\rG_0,\bigoplus_{j\neq0}\fg_j)$ or $(\rG_0,\fg_{-1}\oplus\bigoplus_{j>0}\fg_j)$-pair stability does not imply Higgs bundle stability. 
\end{remark}
 \begin{example}\label{ex uniformizing metric}
  The $\rPSL_2\C$-Higgs bundle $(E_\rT,e)$ from Example \ref{ex sl2 uniformizing} is polystable. Since $E_\rT$ is the frame bundle of $K^{-1},$ a metric on $E_\rT$ defines a metric on the Riemann surface $X.$ A solution to the Hitchin equations in this case is equivalent to a metric of constant curvature on $X$ (see \cite{selfduality}). As a result, the Higgs bundle $(E_\rT,e)$ will be referred to as the uniformizing Higgs bundle for $X.$ 
 \end{example}

% \textcolor{red}{For general pairs... moment map, since the action is a restriction  $(\fk^\R)*\to(\fk_0^\R)^*\to\fk_0^\R$ given by moment map for $(\rG,\fg)$-pairs orthogonally projected onto $\fg_0.$ Since $[\fg_1,\fg_{-1}]\subset\fg_0$ we loose no information. As a result, a solution to the $(\rG_0,\fg_1)$-pair equations is also a solution to the $\rG$-Higgs bundle equations...  This defines a map on moduli spaces (cite Simpson for ps fixed point implies metric in $\rG_0$)}

% Let $\rG$ be a complex reductive Lie group with maximal compact subgroup $\rK^\R<\rG$ and let $\rho:\rG\to\rGL(V)$ be a holomorphic representation. Fix a $\rK^\R$-invariant hermitian inner-product $\langle \cdot,\dot\rangle_V$ on $V,$ and let $\omega(x,y)=\frac{1}{2i}(\langle x,y\rangle_V-\langle y,x\rangle_V)$ be the associated symplectic form. Then the $\rK^\R$ action on $V$ is Hamiltonian and there is an associated moment map $\mu:V\to(\fk^\R)^*.$ Fixing a $\rK^\R$-invariant Hermitian inner-product on $\fk^\R$ defines an isomorphism 

% Let $B$ be $\rG$-invariant nondegenerate bilinear form on the Lie algebra $\fg$ and $\tau:\fg\to\fg$ be the conjugate linear involution with fixed point set $\fk^\R.$ Then $B(\cdot,\tau(\cdot))$ defines a $\rK^\R$-invariant Hermitian inner-product on $\fg$ which defines an isomorphism $(\fk^\R)^*\to\fk^\R.$ 

\subsection{Character varieties and variations of Hodge structure}\label{sec:char-vari-vari}
% We now recall the nonabelian Hodge correspondence between Higgs bundles and surface group representations. 
Given a polystable $\rG^\R$-Higgs bundle $(E,\varphi)$, there is a metric $h$ on $E$ solving the Hitchin equations \eqref{eq Hitchin eq}. For such a metric $h,$ the connection 
\[D=A_h+\varphi-\tau(\varphi)\]
defines a flat connection on the $\rG^\R$-bundle $E_h(\rG^\R)$, where $E_h$ is the $\rH^\R$-bundle associated to the metric $h.$ As a result, a polystable $\rG^\R$-Higgs bundle $(E,\varphi)$ defines representations $\rho:\pi_1(X)\to\rG^\R$ such that $E_h(\rG^\R)\cong \widetilde X\times_\rho\rG^\R$.

Given a representation $\rho:\pi_1(X)\to\rG^\R$, a metric $h_\rho$ on a flat bundle $\widetilde X\times_\rho\rG^\R$ can be interpreted as a $\rho$-equivariant map to the Riemannian symmetric space of $\rG^\R:$ 
\[h_\rho:\widetilde X\to\rG^\R/\rH^\R.\]
A metric $h_\rho$ is called harmonic if it is a critical point of the energy function 
\[\cE(h_\rho)=\frac{1}{2}\int_X|dh_\rho|^2.\]
This makes sense since, for two dimensional domains, the energy only depends on the conformal structure of the domain.
It turns out that a metric $h$ solves the Hitchin equations \eqref{eq Hitchin eq} if and only if the $\rho$-equivariant map $h_\rho:\tilde X\to\rG^\R/\rH^\R$ is harmonic. 

\begin{remark}\label{rem higgs field is}
  In this correspondence, the Higgs field $\varphi$ is identified with the $(1,0)$-part of the differential of the map $h_\rho$. As a result, for the uniformizing Higgs bundle $(E_\rT,e)$, the harmonic metric $h_\rho:\widetilde X\to\mathbb{H}^2$ is $\rho$-equivariant biholomorphism. Thus, $X=\mathbb{H}^2/\rho(\pi_1(X))$ and $\rho$ is the uniformizing representation of the Riemann surface. 
\end{remark}

A representation $\rho:\pi_1(X)\to\rG^\R$ is called reductive if post composing $\rho$ with the adjoint representation of $\rG^\R$ decomposes as a direct sum of irreducible representations. 
Corlette's theorem \cite{canonicalmetrics} (proven by Donaldson \cite{harmoicmetric} for $\rSL_2\C$) asserts that given a representation $\rho:\pi_1(X)\to\rG^\R$, there exists a $\rho$-equivariant harmonic map $h_\rho:\widetilde X\to\rG^\R/\rH^\R$ if and only if $\rho$ is reductive. 
Denote the set of reductive representations $\rho:\pi_1(X)\to\rG^\R$ by $\Hom^+(\pi_1(X),\rG^\R)$. The moduli space of $\rG^\R$-conjugacy classes of  of representations $\pi_1(X)$ in $\rG^\R$ is called the {\bf character variety} and denoted by 
\[\cR(\rG^\R)=\Hom^+(\pi_1(X),\rG^\R)/\rG^\R.\]

Combining Corlette's theorem with the Hitchin--Kobayashi correspondence defines a homeomorphism $\cM(\rG^\R)\cong\cR(\rG^\R)$ between the moduli space  of $\rG^\R$-Higgs bundles on $X$ and the $\rG^\R$-character variety. This is called the {\bf nonabelian Hodge correspondence}. 

Let $\rho:\pi_1(X)\to\rG^\R$ be a representation and $\tilde\rG^\R<\rG^\R$ be a reductive subgroup, we say that $\rho$ factors through $\tilde\rG^\R$ if $\rho$ can be written as $\rho:\pi_1(X)\to\tilde\rG^\R\hookrightarrow\rG^\R.$ The following proposition is immediate from the nonabelian Hodge correspondence.
\begin{proposition}\label{prop equiv to reduction}
  Let $\tilde\rG^\R<\rG^\R$ be a reductive subgroup with maximal compact $\tilde\rH^\R<\rH^\R$ and let $\rho:\pi_1(X)\to\rG^\R$ be a reductive representation. The following are equivalent 
  \begin{itemize}
    \item $\rho$ factors through $\tilde\rG^\R$,
    \item the $\rho$-equivariant harmonic map $h_\rho$ factors as $h_\rho:\widetilde X\to \tilde\rG^\R/\tilde\rH^\R\hookrightarrow \rG^\R/\rH^\R$, and
    \item the associated $\rG^\R$-Higgs bundle reduces to a polystable $\tilde\rG^\R$-Higgs bundle.
  \end{itemize} 
\end{proposition}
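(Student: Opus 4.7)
The plan is to deduce all three equivalences from the nonabelian Hodge correspondence of the preceding subsection, applied simultaneously to $\rG^\R$ and to the reductive subgroup $\tilde\rG^\R$, using that the inclusion of symmetric spaces $\tilde\rG^\R/\tilde\rH^\R\hookrightarrow\rG^\R/\rH^\R$ is totally geodesic since $\tilde\rG^\R<\rG^\R$ is reductive (the Cartan decomposition of $\rG^\R$ restricts to one for $\tilde\rG^\R$).

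For $(1)\Rightarrow(2)$: assuming $\rho=\iota\circ\tilde\rho$ with $\tilde\rho:\pi_1(X)\to\tilde\rG^\R$, I would first verify that $\tilde\rho$ is reductive as a representation into $\tilde\rG^\R$; this follows because $\tilde\fg^\R$ sits as a direct summand of $\fg^\R$ under the $\tilde\rG^\R$-adjoint action, and complete reducibility of the $\rG^\R$-adjoint action on $\fg^\R$ therefore restricts. Corlette's theorem applied inside $\tilde\rG^\R$ then yields a $\tilde\rho$-equivariant harmonic map $\tilde h:\widetilde X\to\tilde\rG^\R/\tilde\rH^\R$, and postcomposing with the totally geodesic inclusion produces a $\rho$-equivariant harmonic map into $\rG^\R/\rH^\R$. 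By the uniqueness part of Corlette's theorem (up to the centralizer of $\rho(\pi_1(X))$), this must agree with $h_\rho$. The converse $(2)\Rightarrow(1)$ is then essentially formal: $\rho$-equivariance of a map whose image lies in $\tilde\rG^\R/\tilde\rH^\R$ forces $\rho(\pi_1(X))$ to preserve this totally geodesic submanifold; since $\tilde\rG^\R$ is the connected identity component of the subgroup of $\rG^\R$ acting isometrically on $\tilde\rG^\R/\tilde\rH^\R$, a conjugation in $\rG^\R$ absorbing any centralizer contribution places $\rho$ inside $\tilde\rG^\R$.

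For $(1)\Leftrightarrow(3)$, I would apply Theorem \ref{thm kh corr Higgs} in both groups. Given (1), the first paragraph gives a harmonic map landing in $\tilde\rG^\R/\tilde\rH^\R$, and by Remark \ref{rem higgs field is} the Higgs field $\varphi$, being the $(1,0)$-part of its differential, takes values in the complexified Cartan piece $\tilde\fm\subset\fm$; this is exactly a reduction of structure of the $\rG^\R$-Higgs bundle to a $\tilde\rG^\R$-Higgs bundle in the sense of Definition \ref{def higgs pair reduction}. The reduced Higgs bundle is polystable because the Hitchin equation on $\tilde\rG^\R$ is the restriction of the Hitchin equation on $\rG^\R$ via the totally geodesic inclusion, so the metric $h$ solving \eqref{eq Hitchin eq} on the original bundle restricts to a solution for the reduction. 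Conversely, given a polystable $\tilde\rG^\R$-reduction, Corlette--Hitchin--Kobayashi applied inside $\tilde\rG^\R$ produces a reductive $\tilde\rho:\pi_1(X)\to\tilde\rG^\R$; extending structure group to $\rG^\R$ recovers the original $\rG^\R$-Higgs bundle (same underlying principal bundle and same Higgs field), so the original $\rho$ is conjugate to $\iota\circ\tilde\rho$.

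The main obstacle is the clean handling of the centralizer ambiguity in Corlette's uniqueness statement when $\rho$ is reducible and in the matching of flat connections attached to polystable but non-stable Higgs bundles. This is precisely what the polystability hypothesis in (3) is designed to accommodate: polystable reductions correspond to S-equivalence classes, matching the equivalence on representations modulo centralizer action, so the three conditions become equivalent exactly at the level of moduli spaces.
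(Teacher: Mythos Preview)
Your approach is correct and matches the paper's: the paper simply declares the proposition ``immediate from the nonabelian Hodge correspondence'' and gives no further argument, so you are supplying exactly the details the authors suppress. The overall logic---apply Corlette and Hitchin--Kobayashi inside $\tilde\rG^\R$, use that $\tilde\rG^\R/\tilde\rH^\R\hookrightarrow\rG^\R/\rH^\R$ is totally geodesic so harmonicity is preserved, and read off the Higgs-bundle reduction from the differential---is the intended one.

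One small caveat worth tightening: in your $(2)\Rightarrow(1)$ step you assert that $\tilde\rG^\R$ is the identity component of the stabilizer of $\tilde\rG^\R/\tilde\rH^\R$ in $\rG^\R$. In general the setwise stabilizer can be strictly larger (it contains, for instance, $N_{\rG^\R}(\tilde\rG^\R)$ and elements of $\rH^\R$ centralizing $\tilde\rG^\R$). What saves you is that the proposition is implicitly stated up to $\rG^\R$-conjugation (as is standard at the level of moduli), and at that level the stabilizer ambiguity and the Corlette-uniqueness ambiguity you flag in your final paragraph are absorbed simultaneously. Your closing remark about polystability and S-equivalence is precisely the right way to dispose of both issues at once.
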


We now describe the special properties of the representations and harmonic maps arising from $\C^*$-fixed points in the Higgs bundle moduli space. Let $\rG^\R<\rG$ be a real form of Hodge type with maximal compact $\rH^\R$ and complexified Cartan decomposition $\fg=\fh\oplus\fm.$  Fix a $\Z$-grading $\fg=\bigoplus_{j\in\Z}\fg_j$ such that $\fg_{2j}\subset\fh$ and $\fg_{2j+1}\subset\fm$ for all $j$. 
Recall that the homogeneous space $D=\rG^\R/\rH_0^\R$ is a period domain. The holomorphic tangent bundle decomposes as 
\[T^{1,0}_\C D =\bigoplus_{j>0} (T^{1,0}_\C D)_j=\bigoplus_{j>0}\rG^\R\times_{\rH_0^\R}\fg_j.\]
\begin{definition}
  Fix a period domain $D=\rG^\R/\rH_0^\R$. A variation of Hodge structure over $X$ is a pair $(\rho,f_\rho)$, where $\rho:\pi_1(X)\to\rG^\R$ is a representation and $f_\rho:\widetilde X\to D$ is a $\rho$-equivariant holomorphic map such that 
  \[\partial f_\rho(K^{-1})\subset (T^{1,0}_\C D)_1.\]
\end{definition}
The following proposition describes the representations and harmonic maps associated to polystable Higgs bundles fixed by the $\C^*$-action.
\begin{proposition}
  Let $\rho:\pi_1(X)\to\rG^\R$ be a reductive representation. Then the Higgs bundle associated to $\rho$ is a $\C^*$-fixed point if and only if $\rho$ factors through a reductive subgroup $\tilde\rG^\R<\rG^\R$ of Hodge type, and there is a period domain $\tilde\rG^\R/\tilde\rH_0^\R$ such that the associated $\rho$-equivariant harmonic map $h_\rho:\widetilde X\to\tilde\rG^\R/\rH^\R$ lifts to a variation of Hodge structure 
  \[\xymatrix@R=1em{&\tilde\rG^\R/\tilde\rH_0^\R\ar[d]&\\\widetilde X\ar[r]_{h_\rho}\ar[ur]^{f_\rho}&\tilde\rG^\R/\tilde\rH^\R\ar[r]&\rG^\R/\rH^\R}\]
\end{proposition}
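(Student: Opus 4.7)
The plan is to combine three threads already assembled in the paper: Simpson's characterization of $\C^*$-fixed points as Hodge bundles, the equivalence between structure group reductions of the Higgs bundle and factorizations of $\rho$ (Proposition \ref{prop equiv to reduction}), and the identification of the Higgs field $\varphi$ with the $(1,0)$-part of the differential of the harmonic map $h_\rho$ (Remark \ref{rem higgs field is}). The key dictionary is that the choice of a Hodge bundle reduction $E_{\rG_0}\subset E$ is equivalent, on the flat bundle side, to a $\rH_0^\R$-refinement of the harmonic metric, and such a refinement is the same as a lift of $h_\rho$ to the period domain $\tilde\rG^\R/\tilde\rH_0^\R$.

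For the forward direction, assume $(E,\varphi)$ is $\C^*$-fixed. By Simpson's result it is a Hodge bundle; after applying Proposition \ref{prop G0g1 type reduction} we may assume it is of type $(\tilde\rG_0,\tilde\fg_1)$ for the $\Z$-grading of a suitable reductive subgroup $\tilde\rG$, and by Proposition \ref{prop hodge real form of Zgrad} together with the discussion following Proposition \ref{prop G0g1 type reduction} it further reduces to a $\tilde\rG^\R$-Higgs bundle for a real form of Hodge type. Now Simpson's averaging argument, recalled in the excerpt just before Example \ref{ex uniformizing metric}, yields a harmonic metric $h$ that is compatible with the $\tilde\rG_0$-reduction, i.e.\ reduces $E$ further to $\tilde\rH_0^\R:=\tilde\rG_0\cap\rH^\R$. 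Viewing this $\tilde\rH_0^\R$-reduction as a $\rho$-equivariant section of the associated bundle $\widetilde X\times_\rho(\tilde\rG^\R/\tilde\rH_0^\R)$ produces the desired lift
\[ f_\rho:\widetilde X\to \tilde\rG^\R/\tilde\rH_0^\R. \]
Via Remark \ref{rem higgs field is} the differential $\partial f_\rho$ is identified with $\varphi$, and the constraint $\varphi\in H^0(E_{\tilde\rG_0}(\tilde\fg_1)\otimes K)$ is exactly Griffiths transversality $\partial f_\rho(K^{-1})\subset (T^{1,0}_\C D)_1$. Holomorphy of $f_\rho$ follows from the same identification, since $\varphi$ is a $(1,0)$-form and the Hitchin equations force the $(0,1)$-component of $df_\rho$ in the graded directions to vanish.

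For the converse, suppose $\rho$ factors through a reductive Hodge-type subgroup $\tilde\rG^\R<\rG^\R$ and that $h_\rho$ lifts to a VHS $f_\rho:\widetilde X\to\tilde\rG^\R/\tilde\rH_0^\R$. The lift $f_\rho$, viewed as a $\rho$-equivariant reduction of the flat $\tilde\rG^\R$-bundle to $\tilde\rH_0^\R$, induces a holomorphic reduction of the underlying Higgs bundle to the complexified Levi $\tilde\rG_0$. The Griffiths transversality condition for $f_\rho$ translates, again via Remark \ref{rem higgs field is}, into $\varphi\in H^0(E_{\tilde\rG_0}(\tilde\fg_1)\otimes K)$. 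Thus $(E,\varphi)$ is a Hodge bundle of type $(\tilde\rG_0,\tilde\fg_1)$, and by the $\C^*$-invariance \eqref{eq iso of G0gk pairs} it is a $\C^*$-fixed point.

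The main obstacle is the forward direction's assertion that a harmonic metric can be chosen compatible with the Hodge bundle structure; this is not automatic from Theorem \ref{thm kh corr Higgs} but rests on Simpson's averaging argument over the $\rU(1)\subset\C^*$ fixing the isomorphism class. Once that compatibility is in hand, translating between the reduction of the principal bundle and the lift to the period domain is a formal consequence of the homogeneous space identification $\tilde\rG^\R/\tilde\rH_0^\R\hookrightarrow \tilde\rG^\R/\tilde\rH^\R$, and the $\fg_j$-decomposition of the complexified tangent bundle of $D$ matches the graded decomposition of $\fg$ underlying the Hodge bundle definition.
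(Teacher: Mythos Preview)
The paper states this proposition without proof; it is meant as a summary of Simpson's correspondence between polystable Hodge bundles and variations of Hodge structure, and the surrounding discussion in \S\ref{sec moduli fixed} and \S\ref{sec:char-vari-vari} already assembles all the ingredients you invoke. Your argument is exactly the natural one that the paper's exposition sets up, so in that sense it matches the paper's implicit approach.

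One step in your converse direction deserves a word more of justification. When you write that the lift $f_\rho$ ``induces a holomorphic reduction of the underlying Higgs bundle to the complexified Levi $\tilde\rG_0$'', holomorphy of this reduction does not follow from the smooth $\tilde\rH_0^\R$-reduction alone. What makes it work is that, decomposing the flat connection via the $\tilde\rH_0^\R$-reduction as $D=A+\Psi$ with $A$ valued in $\fh_0^\R$ and $\Psi$ valued in $\fq^\R$, holomorphy of $f_\rho$ forces $\Psi^{1,0}\in\fq_+$ and $\Psi^{0,1}\in\fq_-$, while Griffiths transversality (together with reality of $D$) pins these down to $\fg_1$ and $\fg_{-1}$ respectively. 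Then $A^{0,1}$ is a $\bar\partial$-operator on the $\tilde\rG_0$-bundle, $\varphi=\Psi^{1,0}$ is $\tilde\fg_1$-valued, and flatness of $D$ gives both holomorphy of $\varphi$ and the Hitchin equation \eqref{eq Hitchin eq}, so this Higgs bundle coincides with the one produced by the nonabelian Hodge correspondence from $h_\rho$. With that clarification, your proof is complete.
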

%\begin{proof}
 % \textcolor{red}{write short proof?}
%\end{proof}

% \begin{remark}
%   Remark on uniformizing Higgs bundle and uniformizing representation.
% \end{remark}

%%%%%%%%%%%%%%%%%%%%%%%%%%%%%%
\section{Toledo invariant and Arakelov--Milnor inequality}
%%%%%%%%%%%%%%%%%%%%%%%%%%%%%%

Let $\rG$ be a complex semisimple Lie group and $\fg=\bigoplus_{j\in\Z}\fg_j$ be a $\Z$-grading  with grading element $\zeta\in\fg_0$ and let $\rG_0<\rG$ be the centralizer of $\zeta.$
Consider the phvs $(\rG_0,\fg_1)$ and recall from \eqref{def Toledo character} that the  Toledo character $\chi_T:\fg_0\to\C$ is given by $\chi_T(x)=B(\zeta,x)B(\gamma,\gamma)$. Recall that the degree of a bundle with respect to a character is defined by \eqref{eq deg of central chacacter}.

\begin{definition}\label{def toledo inv}
  Let $(E,\varphi)$ be a $(\rG_0,\fg_1)$-Higgs pair and $\chi_T:\rG_0\to\C^*$ be the Toledo character. Then the Toledo invariant $\tau(E,\varphi)$ is defined to by 
  \[\tau(E,\varphi)=\deg_{\chi_T}(E).\]
\end{definition}

\begin{remark} 
For the grading $\fg=\fg_{-1}\oplus\fg_0\oplus\fg_1$, the canonical real form $\rG^\R<\rG$ from Proposition \eqref{prop hodge real form of Zgrad} is a Hermitian Lie group. For such real forms, a $\rG^\R$-Higgs bundle is an $(\rG_0,\fg_{-1}\oplus\fg_1)$-Higgs pair. The Toledo invariant of such a Higgs bundle agrees with the Toledo invariant defined above \cite{BGRmaximalToledo}.  
\end{remark}

 Recall Definition \ref{def toledo rank} of the Toledo rank $\rk_{\chi_T}(x)$ of a point $x\in\fg_1$. We define the {\bf Toledo rank} $\rk_T(\varphi)$ of a $(\rG_0,\fg_1)$-Higgs pair $(E,\varphi)$ to be 
 \[\rk_T(\varphi)=\rk_T(\varphi(x))\ \text{ for a generic $x\in X$}.\]
\begin{theorem}\label{thm: Arakelov--Milnor ineq}
  Let $\rG$ be a complex semisimple Lie group and $\fg=\bigoplus_{j\in\Z}\fg_j$ be a $\Z$-grading with grading element $\zeta\in\fg_0$. Let $\rG_0<\rG$ be the centralizer of $\zeta$ and $\alpha=\lambda\zeta$ for $\lambda\in\R.$ If $(E,\varphi)$ is an $\alpha$-semistable $(\rG_0,\fg_1)$-Higgs pair, then the Toledo invariant $\tau(E,\varphi)$ satisfies the following inequality
  \[-\rk_T(\varphi)(2g-2)+\lambda(B(\gamma,\gamma)B(\zeta,\zeta)-\rk_T(\varphi))\leq ~ \tau(E,\varphi)\leq \lambda B(\gamma,\gamma) B(\zeta,\zeta).\]
\end{theorem}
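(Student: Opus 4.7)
The plan is to derive each bound by applying $\alpha$-semistability to a well chosen pair $(s,\sigma)$, where $s\in i\fk_0^\R$ sits in a maximal compact subalgebra of $\fg_0$ (chosen so that $\zeta$ and the $h$ appearing below both lie in $i\fk_0^\R$) and $\sigma$ is a reduction of $E$ to the parabolic $\rP_s$. I use throughout that $\chi_s(x)=B(s,x)$ and that appropriate positive integer multiples of each character of interest lift to characters of the relevant groups, as supplied by Propositions \ref{prop: relative invariant for sl2 triple} and \ref{prop JM reg toledo has rel invar}.

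For the upper bound, I take $s=-\zeta$ and $\sigma$ the trivial reduction. This is valid: $\zeta$ is central in $\fg_0$, so $\rP_{-\zeta}=\rG_0$, and $\Ad(e^{-t\zeta})v=e^{-t}v$ for $v\in\fg_1$ shows that $\varphi\in H^0(E(V_{-\zeta})\otimes K)$ automatically. Since $\chi_{-\zeta}=-\chi_T/B(\gamma,\gamma)$, one has $\deg E(\sigma,-\zeta)=-\tau(E,\varphi)/B(\gamma,\gamma)$, while $\langle\alpha,-\zeta\rangle=-\lambda B(\zeta,\zeta)$; semistability then gives $\tau(E,\varphi)\leq \lambda B(\gamma,\gamma)B(\zeta,\zeta)$ immediately.

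For the lower bound, I complete a generic value $\varphi(x_0)\in\fg_1$ to an $\fsl_2$-triple $\{f,h,e\}$ with $h\in\fg_0$, so that $\rk_T(\varphi)=B(h/2,h/2)B(\gamma,\gamma)$ by Proposition \ref{prop:toledo-rank}, and then take $s=s':=\zeta-h/2$. By Proposition \ref{prop: s defines parabolic of g0}, $\rP_{s'}=\rP_{0,e}$, with Levi the reductive group $\hat\rG_0$ of the maximal JM-regular phvss $(\hat\rG_0,\hat\fg_1)$. Writing $\fg=\bigoplus_j\tilde\fg_j$ for the $\ad_h$-grading, one checks that $V_{s'}\cap\fg_1=\bigoplus_{k\geq 1}(\fg_1\cap\tilde\fg_{2k})$, with lowest piece $\hat\fg_1=\fg_1\cap\tilde\fg_2$ containing $e$. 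Since the generic $\rG_0$-orbit of $\varphi(x)$ is constant on the complement of a finite set, a standard parabolic-reduction construction (as in \cite{BGRmaximalToledo,HiggsPairsSTABILITY}) produces a holomorphic reduction $\sigma$ of $E$ to $\rP_{0,e}$ with $\varphi\in H^0(E_\sigma(V_{s'})\otimes K)$; producing this reduction rigorously is the main technical obstacle. Applying $\alpha$-semistability to $(s',\sigma)$ and using $\deg E(\sigma,\zeta)=\tau(E,\varphi)/B(\gamma,\gamma)$ gives
\[ \tau(E,\varphi)\geq B(\gamma,\gamma)\deg E(\sigma,h/2)+\lambda B(\gamma,\gamma)B(\zeta,\zeta)-\lambda\rk_T(\varphi). \]

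To finish, I will establish the auxiliary bound $\deg E(\sigma,h/2)\geq -B(h/2,h/2)(2g-2)$. Passing to the associated graded of $\sigma$ yields an $\hat\rG_0$-bundle $\hat E$ and a holomorphic section $\bar\varphi\in H^0(\hat E(\hat\fg_1)\otimes K)$, namely the projection of $\varphi$ to the minimal piece $\hat\fg_1$; since $\bar\varphi(x_0)=e\in\hat\Omega$, $\bar\varphi$ is generically in the open $\hat\rG_0$-orbit. By Proposition \ref{prop JM reg toledo has rel invar} applied to the JM-regular phvs $(\hat\rG_0,\hat\fg_1)$, some positive integer multiple $q\hat\chi_T$ of its Toledo character $\hat\chi_T=B(h/2,\cdot)B(\hat\gamma,\hat\gamma)$ admits a polynomial relative invariant $F$ of degree $qB(h/2,h/2)B(\hat\gamma,\hat\gamma)$. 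Then $F(\bar\varphi)$ is a nonzero holomorphic section of the line bundle $\hat E(q\hat\chi_T)\otimes K^{qB(h/2,h/2)B(\hat\gamma,\hat\gamma)}$, whose degree is therefore nonnegative; dividing by $qB(\hat\gamma,\hat\gamma)$ gives the claimed bound. Plugging this into the displayed estimate and using $\rk_T(\varphi)=B(\gamma,\gamma)B(h/2,h/2)$ completes the lower bound.
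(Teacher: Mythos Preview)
Your argument is correct and follows essentially the same route as the paper's own proof: the upper bound comes from applying $\alpha$-semistability to the central element $-\zeta$ (the paper uses the harmless rescaling $-B(\gamma,\gamma)\zeta$), and the lower bound from applying it to $s'=\zeta-h/2$ together with the relative invariant on the maximal JM-regular subspace $(\hat\rG_0,\hat\fg_1)$. The only cosmetic difference is that you normalize the auxiliary character on $\hat\fg_0$ by $B(\hat\gamma,\hat\gamma)$ and invoke Proposition~\ref{prop JM reg toledo has rel invar} literally, whereas the paper keeps the constant $B(\gamma,\gamma)$ from the ambient algebra and effectively appeals to Proposition~\ref{prop: relative invariant for sl2 triple}; as you note, the normalization cancels. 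One small slip: your description of $V_{s'}\cap\fg_1$ as $\bigoplus_{k\geq 1}(\fg_1\cap\tilde\fg_{2k})$ should read $\bigoplus_{j\geq 2}(\fg_1\cap\tilde\fg_j)$, since $\ad_{s'}$ acts on $\fg_1\cap\tilde\fg_j$ by $1-j/2$; this does not affect the proof, as what you actually use is that $\varphi$ lands there (because $\rP_{0,e}\cdot e\subset\bigoplus_{j\geq 2}\tilde\fg_j$) and that the lowest piece is $\hat\fg_1$.
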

Before proving the theorem we list some immediate consequences. 
\begin{corollary}
Assume $(\rG_0,\fg_1)$ is a JM-regular phvs and set $\alpha=\lambda\zeta$. Then the Toledo invariant of a $\alpha$-semistable $(\rG_0,\fg_1)$-Higgs pair $(E,\varphi)$ satisfies 
 \[-\rk_T(\varphi)(2g-2)+\lambda(\rk_T(\rG_0,\fg_1)-\rk_T(\varphi))\leq \tau(E,\varphi)\leq \lambda\rk(\rG_0,\fg_1).\]
\end{corollary}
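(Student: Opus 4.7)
The plan is to derive the corollary as a direct specialization of Theorem \ref{thm: Arakelov--Milnor ineq}, invoking the simplification of the constants that is available when $(\rG_0,\fg_1)$ is JM-regular. The substantive work has already been carried out in the theorem; what remains is essentially a one-line algebraic substitution.

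The first step is to replace $B(\gamma,\gamma)B(\zeta,\zeta)$ in the theorem with $\rk_T(\rG_0,\fg_1)$. This is exactly the content of the remark following Definition \ref{def toledo rank}, but it deserves to be unpacked. By the definition of JM-regular given in \S\ref{section: canonical Z grade}, the grading element $\zeta$ coincides with $h/2$ for some even $\fsl_2$-triple $\{f,h,e\}$ with $e\in\Omega$ in the open $\rG_0$-orbit of $\fg_1$. Proposition \ref{prop:toledo-rank} then yields
\[
\rk_T(\rG_0,\fg_1) \,=\, \rk_T(e) \,=\, B\!\left(\tfrac{h}{2},\tfrac{h}{2}\right) B(\gamma,\gamma) \,=\, B(\zeta,\zeta)\,B(\gamma,\gamma).
\]

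The second step is the substitution itself. Plugging this identity into the two-sided inequality of Theorem \ref{thm: Arakelov--Milnor ineq}, the upper bound $\lambda B(\gamma,\gamma)B(\zeta,\zeta)$ becomes $\lambda\,\rk_T(\rG_0,\fg_1)$, and the parenthesized term $\lambda(B(\gamma,\gamma)B(\zeta,\zeta)-\rk_T(\varphi))$ in the lower bound becomes $\lambda(\rk_T(\rG_0,\fg_1)-\rk_T(\varphi))$. This produces exactly the stated inequality of the corollary.

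The main obstacle is therefore not in the corollary itself but is packaged entirely into the proof of Theorem \ref{thm: Arakelov--Milnor ineq}. That proof rests on two applications of the $\alpha$-semistability condition. For the upper bound one tests with $s=-\zeta$: since $\zeta$ is central in $\fg_0$, the associated parabolic is $\rG_0$ itself, the trivial reduction automatically satisfies $\varphi\in H^0(E(V_s)\otimes K)$ because $V_{-\zeta}=\fg_1$, and the relation $\chi_T=B(\gamma,\gamma)\,\chi_\zeta$ converts semistability directly into the upper bound. For the lower bound one picks a generic $x_0\in X$ with $\varphi(x_0)\in\Omega$ in the generic orbit of $\varphi$, completes $e:=\varphi(x_0)$ to an $\fsl_2$-triple $\{f,h,e\}$ with $h\in\fg_0$, and tests with $s=\zeta-h/2$, whose parabolic in $\rG_0$ is $\rP_{0,e}$ from Proposition \ref{prop: s defines parabolic of g0}; the genuine technical input lies in extending the pointwise $\fsl_2$-triple data to a global reduction of $E$ to $\rP_{0,e}$ and controlling $\deg E(\sigma,h/2)$ by an Arakelov-type bound coming from the weight filtration of $\fg_1$ under $\ad_h$ and the fact that $\varphi$ takes values in the top piece $\bigoplus_{k\ge 2}(\fg_1)_k$ of this filtration.
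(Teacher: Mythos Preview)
Your derivation of the corollary is correct and matches the paper's treatment: the paper lists it as an immediate consequence of Theorem~\ref{thm: Arakelov--Milnor ineq}, and your first two steps---invoking the remark after Definition~\ref{def toledo rank} (or equivalently Proposition~\ref{prop:toledo-rank}) to identify $B(\gamma,\gamma)B(\zeta,\zeta)=\rk_T(\rG_0,\fg_1)$ in the JM-regular case, then substituting---are exactly what is intended.

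One small remark on your third paragraph, which sketches the proof of the underlying theorem rather than of the corollary: your description of the lower-bound mechanism is not quite how the paper argues. You attribute the control of $\deg E(\sigma,h/2)$ to ``the weight filtration of $\fg_1$ under $\ad_h$ and the fact that $\varphi$ takes values in the top piece $\bigoplus_{k\ge 2}(\fg_1)_k$.'' The paper instead projects to the Levi factor $\hat\rG_0$ and applies the relative invariant $F$ of Proposition~\ref{prop JM reg toledo has rel invar} to the Higgs field, producing a nonzero holomorphic section $F(\varphi)\in H^0(E_\sigma(\hat\chi_{T,q})\otimes K^{q\hat\chi_T(\hat\zeta)})$; the nonnegativity of that line bundle's degree is what yields the bound~\eqref{eq 2g-2}. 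This does not affect the correctness of your corollary proof, since that step is already packaged inside the theorem.
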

For the application to  Higgs bundles fixed by the $\C^*$-action we are interested in the case $\alpha=0.$
\begin{corollary}\label{cor am ineq for comp}
  For $\alpha=0$, the Toledo invariant $\tau(E,\varphi)$ of a polystable $(\rG_0,\fg_1)$-pair satisfies the inequality $-\rk_T(\varphi)(2g-2)\leq \tau(E,\varphi)\leq 0.$ In particular,
  \[|\tau(E,\varphi)|\leq \rk_T(\rG_0,\fg_1)(2g-2).\]
\end{corollary}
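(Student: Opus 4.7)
The corollary is an immediate specialization of Theorem~\ref{thm: Arakelov--Milnor ineq}. Setting $\alpha = 0$ (equivalently $\lambda = 0$) kills both linear-in-$\lambda$ terms in the theorem's two-sided inequality, leaving
\[-\rk_T(\varphi)(2g-2) \;\le\; \tau(E,\varphi) \;\le\; 0,\]
which is the first stated inequality. Taking absolute values yields $|\tau(E,\varphi)| \le \rk_T(\varphi)(2g-2)$, and applying the pointwise bound $\rk_T(e) \le \rk_T(\rG_0,\fg_1)$ (the final proposition of Section~\ref{sec toledo char and per dom}) at a generic $x \in X$, where $e = \varphi(x)$, produces the asserted $|\tau(E,\varphi)| \le \rk_T(\rG_0,\fg_1)(2g-2)$.

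The substance of the matter lies in Theorem~\ref{thm: Arakelov--Milnor ineq}, which I would prove by two independent applications of the $\alpha$-semistability condition of Definition~\ref{def: stability}. For the \emph{upper bound} $\tau(E,\varphi) \le \lambda B(\gamma,\gamma) B(\zeta,\zeta)$, take $s = -B(\gamma,\gamma)\zeta$. Since $\zeta$ is central in $\fg_0$ one has $\rP_s = \rG_0$ and the trivial reduction $\sigma$ suffices; the adjoint action $\Ad(\exp ts)$ on $\fg_1$ is the scalar $e^{-tB(\gamma,\gamma)}$, so $V_s = \fg_1$ and $\varphi \in H^0(E(V_s)\otimes K)$ is automatic. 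Since $\chi_s = -\chi_T$, the semistability condition $\deg E(\sigma,s) = -\tau(E,\varphi) \ge \langle\alpha,s\rangle = -\lambda B(\gamma,\gamma)B(\zeta,\zeta)$ gives the upper bound directly.

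For the \emph{lower bound}, fix a generic point $x_0 \in X$, put $e = \varphi(x_0)$, complete $e$ to an $\fsl_2$-triple $\{f,h,e\}$ with $h \in \fg_0$, and take $s = B(\gamma,\gamma)(\zeta - h/2)$. By Proposition~\ref{prop: s defines parabolic of g0}, $s$ cuts out the parabolic $\rP_{0,e} \subset \rG_0$ with Levi $\hat\rG_0$, and the $\ad_{h/2}$-eigenspace decomposition of $\fg_1$ shows $e \in V_s = \bigoplus_{j \ge 1}(\fg_1)_j$; genericity of $\varphi(x_0) \in \rG_0 \cdot e$ then produces a global reduction $\sigma \in H^0(E(\rG_0/\rP_{0,e}))$ with $\varphi \in H^0(E_\sigma(V_s)\otimes K)$. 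Writing $\chi_s = \chi_T - \chi_h$ with $\chi_h(x) = B(\gamma,\gamma)B(h/2,x)$, and using the identity $B(\zeta - h/2, h/2) = 0$ from Proposition~\ref{prop:toledo-rank}, the semistability inequality becomes
\[\tau(E,\varphi) - \deg_{\chi_h}(E_\sigma) \;\ge\; \lambda\bigl(B(\gamma,\gamma)B(\zeta,\zeta) - \rk_T(\varphi)\bigr).\]
The main obstacle is then showing $\deg_{\chi_h}(E_\sigma) \ge -\rk_T(\varphi)(2g-2)$. For this I would exploit that the Levi $\hat\rG_0$ is JM-regular and that $e \in \hat\fg_1$ lies in its open orbit, so Proposition~\ref{prop JM reg toledo has rel invar} applied to $(\hat\rG_0,\hat\fg_1)$ furnishes a relative invariant of degree $\rk_T(\varphi)$ for (a multiple of) the character $\chi_h$; evaluating this invariant on the projection of $\varphi$ to $\hat\fg_1$ gives a nonzero section of $E_\sigma(\chi_h)\otimes K^{\rk_T(\varphi)}$, and nonnegativity of the degree of a line bundle admitting a nonzero section produces exactly the required $(2g-2)$ factor.
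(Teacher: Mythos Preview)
Your proposal is correct and follows essentially the same route as the paper: the corollary is obtained by setting $\lambda=0$ in Theorem~\ref{thm: Arakelov--Milnor ineq}, and your sketch of that theorem's proof (the central $s=-B(\gamma,\gamma)\zeta$ for the upper bound; the parabolic $\rP_{0,e}$ determined by $s=\zeta-h/2$, together with the relative invariant of the JM-regular phvss $(\hat\rG_0,\hat\fg_1)$, for the lower bound) matches the paper's argument step for step.

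One point worth noting: the paper's proof of the corollary also records a genuinely different second argument that you do not mention. Using polystability (rather than mere semistability), one invokes the Hitchin--Kobayashi correspondence to obtain an equivariant horizontal holomorphic map $f:\widetilde X\to\rG^\R/\rH_0^\R$, and then applies the Schwarz Lemma together with the holomorphic sectional curvature bounds of Proposition~\ref{prop:sectional_curvature} to conclude $\frac{1}{2\pi}\int_X f^*\omega\leq\rk_T(\rG_0,\fg_1)(2g-2)$; the identification $f^*\omega=-\chi_T(iF_h)$ then recovers the Toledo bound. This differential-geometric route explains the name ``Arakelov--Milnor'' and the normalization of $\chi_T$, whereas your purely algebraic stability argument is cleaner and needs only semistability.
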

\begin{proof}
  This is a direct application of Theorem \ref{thm: Arakelov--Milnor ineq} in the case $\lambda=0$, for which in fact it is enough to assume the semistability 
of $(E,\varphi)$. However, with the extra assumption of polystability, the result can also be deduced from our holomorphic sectional curvature computations in Proposition \ref{prop:sectional_curvature}, as we shall now explain. By Theorem \ref{thm kh corr Higgs} such $(E,\varphi)$ admits a solution $h$ to the Hitchin equations (\ref{eq Hitchin eq}), so we have a variation of Hodge structure as studied in §~\ref{sec:char-vari-vari}. In particular we obtain an equivariant horizontal holomorphic map $f:\tilde X\rightarrow \rG^\R/\rH_0^\R$. Denote by $\omega$ the 2-form associated to the Hermitian metric on $\rG^\R/\rH_0^\R$ considered in §~\ref{sec hol sec curv}, normalized to have horizontal holomorphic sectional curvature $-1\leq K\leq -\frac1{\rk_T(\rG_0,\fg_1)}$.
  By the Schwarz Lemma, one has \[ \frac1{2\pi} \int_X f^*\omega \leq \rk_T(\rG_0,\fg_1) (2g-2).\] The result will then follow from the identification \[ f^*\omega = - \chi_T(iF_h), \]
  where $F_h$ is the curvature as in (\ref{eq Hitchin eq}). From (\ref{eq Hitchin eq}) we have $\chi_T(F_h)=-\chi_T([\varphi,\varphi^*])$; writing locally $\varphi=a dz$, we obtain
  \begin{align*}
  \chi_T(F_h)&=-B(\zeta,[a,a^*])B(\gamma,\gamma)dz\wedge d\bar z\\ &=-B([\zeta,a],a^*)B(\gamma,\gamma)dz\wedge d\bar z \\ &=-B(a,a^*)B(\gamma,\gamma)dz\wedge d\bar z\\ &=-\langle a,a\rangle dz\wedge d\bar z.
  \end{align*}
  On the other hand, $f^*\omega = \omega(a,\bar a) dz\wedge d\bar z = i \langle a,a \rangle dz\wedge d\bar z$. Therefore $f^*\omega=-\chi_T(iF_h)$ which completes the proof.

  The same argument gives the more precise inequality $\tau(E,\varphi) \geq -\rk_T(\varphi)(2g-2)$, starting from the holomorphic curvature estimate (\ref{eq:1}).
\end{proof}

Since the Toledo invariant has discrete values, it is constant on the connected components of the moduli space of polystable $(\rG_0,\fg_1)$-Higgs pairs $\cM(\rG_0,\fg_1)$. As a result, we have the following decomposition of $\cM(\rG_0,\fg_1).$ 
\begin{corollary}
  The moduli space of polystable $(\rG_0,\fg_1)$-Higgs pairs decomposes as 
  \[\cM(\rG_0,\fg_1)=\coprod\limits_{-\rk_T(\rG_0,\fg_1)(2g-2)\leq \tau\leq 0}\cM^\tau(\rG_0,\fg_1),\]
  where $(E,\varphi)\in\cM^\tau(\rG_0,\fg_1)$ if and only if $\tau(E,\varphi)=\tau.$
\end{corollary}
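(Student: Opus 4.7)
The plan is to assemble the decomposition from three ingredients already present in the excerpt: rationality/discreteness of $\tau$, its constancy on connected components, and the bounds from Corollary~\ref{cor am ineq for comp}. None of these is deep given what has been established; the content of the corollary is essentially bookkeeping.

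First I would check that $\tau(E,\varphi)=\deg_{\chi_T}(E)$ takes values in a discrete subset of $\R$. Since $\rG$ is semisimple the Toledo character $\chi_T:\fg_0\to\C$ need not exponentiate to $\rG_0$, but by the argument used in Proposition~\ref{prop: relative invariant for sl2 triple} (triviality of $\chi_T$ on $\rG_0^e$ for a regular $e$, plus finiteness of the component group) there is a positive integer $q$ so that $q\chi_T$ lifts to a character $\chi_{T,q}:\rG_0\to\C^*$. Then, by \eqref{eq deg of central chacacter},
\[
\tau(E,\varphi)=\tfrac{1}{q}\deg E(\chi_{T,q})\in\tfrac{1}{q}\Z.
\]
So the Toledo invariant lies in a fixed discrete lattice, independent of $(E,\varphi)$.

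Next I would argue that $\tau$ is locally constant on $\cM(\rG_0,\fg_1)$. Since $\deg E(\chi_{T,q})$ is the degree of a topological line bundle associated to the underlying $\rG_0$-bundle $E$, it depends only on the topological type of $E$ and therefore is constant on connected families. Together with the discreteness above, this shows $\tau:\cM(\rG_0,\fg_1)\to\tfrac{1}{q}\Z$ is locally constant, hence constant on each connected component. Consequently
\[
\cM(\rG_0,\fg_1)=\coprod_{\tau\in\tfrac{1}{q}\Z}\cM^{\tau}(\rG_0,\fg_1),
\]
where the disjoint union is a priori over all values in the lattice.

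Finally I would truncate the range using Corollary~\ref{cor am ineq for comp}: every polystable $(\rG_0,\fg_1)$-Higgs pair satisfies $-\rk_T(\rG_0,\fg_1)(2g-2)\leq\tau(E,\varphi)\leq 0$, so $\cM^{\tau}(\rG_0,\fg_1)$ is empty outside this interval. This yields the stated decomposition. There is no real obstacle here; the only subtlety worth checking is that the integer $q$ above is indeed uniform (i.e.\ depends only on $(\rG,\fg_0,\chi_T)$ and not on the Higgs pair), which is clear since it is determined by the lifting of $q\chi_T$ to $\rG_0$ alone. The rest is immediate from the preceding corollary.
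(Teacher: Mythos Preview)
Your proposal is correct and follows exactly the approach the paper takes: the paper simply states, immediately before the corollary, that ``since the Toledo invariant has discrete values, it is constant on the connected components of the moduli space,'' and then records the decomposition using the bounds from Corollary~\ref{cor am ineq for comp}. You have spelled out the discreteness and local constancy more carefully than the paper does, but the argument is the same.
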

%\textcolor{red}{Do we want to make some statement about $\rG^\R$-Higgs bundle fixed points here?}
\begin{remark}
  We will discuss the special properties of the space $\cM^{\tau}(\rG_0,\fg_1)$ for the minimal value $\tau=-(2g-2)\rk_T(\rG_0,\fg_1)$ in the next section.
\end{remark}

\begin{proof}[Proof of Theorem \ref{thm: Arakelov--Milnor ineq}] 
The upper bound follows immediately from stability. Note that $s=-B(\gamma,\gamma)\zeta$ is in the center of $\fg_0$ and has the property that $\Ad(e^{ts})(x)$ is bounded as $t\to\infty$ for all $x\in\fg_1.$ Since $s$ is in the center of $\fg_0,$ the associated parabolic is all of $\rG_0.$ Hence, $\alpha$-semistability of $(E,\varphi)$ implies 
\[\deg E(-B(\gamma,\gamma)\zeta)\geq B(\alpha, -B(\gamma,\gamma)\zeta).\]
Since $-\tau(E,\varphi)=\deg E(-B(\gamma,\gamma)\zeta)$ and  $\alpha=\lambda\zeta$ we have $\tau(E,\varphi) \leq \lambda B(\gamma,\gamma)B(\zeta,\zeta).$

The lower bound is more interesting. For generic $x\in X$, $\varphi(x)\in\fg_1$ is in a fixed $\rG_0$-orbit. Let $e$ be any element in this orbit and $\{f,h,e\}$ be an associated $\fsl_2$-triple with $h\in\fg_0$. Let $(\hat\rG_0,\hat\fg_1)$ be the associated maximal JM-regular phvss of $(\rG_0,\fg_1)$ for $e$ (see Definition \ref{def max JM reg}). By Proposition \ref{prop max jm reg for e}, $e\in\hat\Omega\subset\fg_1$ is in the open $\hat\rG_0$-orbit. 
 Recall from Proposition \ref{prop: s defines parabolic of g0} that $e$ defines a parabolic subalgebra $\fp_{0,e}\subset\fg_0$ and a corresponding parabolic subgroup $\rP_{0,e}<\rG_0$. Since the construction of $\rP_{0,e}$ is canonical, we obtain a holomorphic reduction of structure group of $E$ to $\rP_{0,e}$ on the open subset of $X$ where $\rk_T(\varphi(x))=\rk_T(e)$. This can be extended to a reduction over all of $X.$ Let $E_\sigma$ be the associated $\rP_{0,e}$-bundle.

Set $\hat\zeta=\frac{h}{2}$ and write $\zeta=\hat\zeta+s.$ Recall from Proposition \ref{prop: s defines parabolic of g0} that the parabolic subalgebra $\fp_{0,e}$ is given by 
  \[\fp_{0,e}=\{x\in\fg_0~|~\Ad(e^{ts})x \text{ is bounded as $t\to\infty$}\}.\]
By definition of the Toledo character, we have 
\[\chi_T(x)=B(\gamma,\gamma)(B(\hat\zeta,x)+B(s,x))\]
Thus
\begin{equation}\label{eq diff of degree}
  \deg E(\sigma,B(\gamma,\gamma)s)=\tau(E,\varphi)-\deg_{\hat\chi_T}(E_\sigma),
\end{equation}
where $\hat\chi_T:\fp_{0,e}\to\C$ is the character defined by $\hat\chi_T(x)=B(\gamma,\gamma)B(\hat\zeta,x).$ 

The Levi factor of $P_{0,e}$ is isomorphic to $\hat\rG_0$. Hence projecting onto the Levi factor defines a holomorphic reduction $E_{\sigma}(\hat\rG_0)$ with $\varphi\in H^0(E_{\sigma}(\hat\rG_0)(\hat\fg_1)\otimes K)$. Recall from Proposition \ref{prop JM reg toledo has rel invar} that there is a positive integer $q$ such that the character $q\cdot \hat\chi_T$ lifts to a character of $\hat\chi_{T,q}:\hat\rG_0\to\C^*$ which has a relative invariant $F:\hat\fg_1\to\C$ of degree $q\cdot \hat\chi_T(\hat\zeta)$. 
Since $\varphi$ is generically in the open $\hat\rG_0$-orbit of $\hat\fg_1,$ applying the relative invariant to the Higgs field defines a nonzero holomorphic section of a line bundle $E_\sigma(\hat\chi_{T,q})\otimes K^{q\cdot \hat\chi_T(\hat\zeta)}$
\[F(\varphi)\in H^0(E_\sigma(\hat\chi_{T,q})\otimes K^{q\cdot \hat\chi_T(\hat\zeta)})\setminus\{0\}.\]
Note that $\deg_{\hat\chi_T}(E_\sigma)=\frac{1}{q}\deg(E_\sigma(\hat\chi_{T,q}))$. Thus, we have 
\begin{equation}
  \label{eq 2g-2}
  \deg_{\hat\chi_T}(E_\sigma)\geq -\hat\chi_T(\hat\zeta)(2g-2).
\end{equation}

We now use the assumption that $(E,\varphi)$ is $\alpha$-semistable for $\alpha=\lambda\zeta.$ Since $[s,\hat\fg_1]=0,$ we have $\hat\fg_1\subset\fg_1^s.$ Thus, we have $\varphi\in\ H^0(E_{\sigma}(\hat\fg_1^s)\otimes K).$ By the definition of the stability for $\alpha=\lambda\zeta$, we have 
\[B(\lambda\zeta, B(\gamma,\gamma)s)\leq \deg E(\sigma, B(\gamma,\gamma)s).\]
Combining this with \eqref{eq diff of degree} and \eqref{eq 2g-2} gives an inequality for the Toledo invariant:
\[-\hat\chi_T(\hat\zeta)(2g-2)+\lambda B(\gamma,\gamma) B(\zeta, s)\leq \tau(E,\varphi).\]

To finish the proof, recall that $B(\hat\zeta,s)=0$,  and $B(\gamma,\gamma)B(\zeta,\hat\zeta)=\rk_T(\varphi).$ Thus, 
\[-\hat\chi_T(\hat\zeta)= B(\gamma,\gamma)B(\hat\zeta,\hat\zeta)=B(\gamma,\gamma)B(\zeta,\hat\zeta)=\rk_T(\varphi),\]
and 
\[\lambda B(\gamma,\gamma)B(\zeta,s)=\lambda(B(\gamma,\gamma)B(\zeta,\zeta)-B(\gamma,\gamma)B(\zeta,\hat\zeta))=\lambda(B(\gamma,\gamma)B(\zeta,\zeta)-\rk_T(\varphi)).\]
Hence we obtain the desired inequality
\[-\rk_T(\varphi)(2g-2)+\lambda(B(\gamma,\gamma)B(\zeta,\zeta)-\rk_T(\varphi))\leq \tau(E,\varphi).\]
 \end{proof}

 % \textcolor{red}{There was some stuff about Toledo slope in the previous draft, which I don't think should be included because the Toledo slope is not constant on components of Hodge bundles. So I'm not sure what the use of the definition is.
 % Example in $\rSp_4\R$ consider the following two fixed points with the same Toledo invariant but different ranks.
 % \begin{itemize}
 %   \item $(L_1^{-1}\oplus L_2^{-1})\oplus (L_1\oplus L_2)$ with $\gamma=\begin{pmatrix}
 %   \alpha&\\&\beta
 % \end{pmatrix}:L_1\oplus L_2\to (L_1^{-1}\oplus L_2^{-1})\otimes K$ with $\deg(L_1),\deg(L_2)<0$ and $\deg(L_1)+\deg(L_2)=-g+1$. Here the $\rk_T(\gamma)=2$.
 % \item $(K^{-\frac{1}{2}}\oplus \cO)\oplus (K^{\frac{1}{2}}\oplus \cO)$ with $\gamma=\begin{pmatrix}
 %   1&\\&0
 % \end{pmatrix}:K^{\frac{1}{2}}\oplus \cO\to (K^{-\frac{1}{2}}\oplus \cO)\otimes K$. Here the $\rk_T(\gamma)=1$.
 % \end{itemize}}

%%%%%%%%%%%%%%%%%%%%%%%%%%%%%%
\section{Rigidity results for maximal Hodge bundles and Variations of Hodge structure}\label{sec rigidity}
%%%%%%%%%%%%%%%%%%%%%%%%%%%%%%
For this section, let $\rG$ be a complex semisimple Lie group with Lie algebra $\fg$ and let $X$ be a compact Riemann surface of genus $g\geq 2$ and canonical bundle $K.$ 
Fix a $\Z$-grading $\fg=\bigoplus_{j\in\Z}\fg_j$ with grading element $\zeta$ and let $\rG_0<\rG$ be the centralizer of $\zeta.$

We will call a polystable $(\rG_0,\fg_1)$-Higgs pair $(E,\varphi)$ {\bf maximal} if the absolute value of the Toledo invariant is maximized.
By Corollary \ref{cor am ineq for comp}, the Toledo invariant $\tau(E,\varphi)$ of a polystable $(\rG_0,\fg_1)$-Higgs pair on $X$ satisfies the inequality 
\[-\rk_T(\rG_0,\fg_1)(2g-2)\leq \tau(E,\varphi)\leq0.\]
 Thus, $(E,\varphi)$ is maximal if and only if $\tau(E,\varphi)=-\rk_T(\rG_0,\fg_1)(2g-2).$

\subsection{The JM-regular case}
We start with the JM-regular case. Assume $(\rG_0,\fg_1)$ is a JM-regular phvs. Fix $e\in\Omega$ and let $\{f,h,e\}=\{f,2\zeta,e\}$ be the associated $\fsl_2$-triple.  Let $\rS<\rG$ be the connected subgroup with Lie algebra spanned by $\{f,h,e\}.$ Note that $\rS$ is isomorphic to $\rPSL_2\C$ or $\rSL_2\C$ depending on $\rG$ and the $\fsl_2$-triple. Finally, let $\rC<\rG$ be the reductive group which centralizes the $\fsl_2$-triple $\{f,h,e\}.$ Note that $\rC=\rG_0^e<\rG_0$ is also the $\rG_0$-stabilizer of $e$.

Let $\rT<\rS$ be the connected subgroup with Lie algebra $\langle h\rangle,$ and note that $\rT<\rG_0.$
Recall the uniformizing Hodge bundle $(E_\rT,e)$ for $X$ from Examples \ref{ex sl2 uniformizing} and \ref{ex uniformizing metric}. Here $E_\rT$ is the holomorphic frame bundle of $K^{-1}$ if $\rS=\rP\rSL_2\C$ and the holomorphic frame bundle of $K^{-\frac{1}{2}}$ if $\rS=\rSL_2\C.$ Since $E_\rT(\langle e\rangle)\otimes K\cong \cO$, the Lie algebra element $e$ defines a holomorphic section of $E_\rT(\langle e\rangle) \otimes K.$ Moreover, $(E_\rT,e)$ is polystable, and a solution to the Hitchin equations is equivalent to a metric of constant curvature on $X.$ Extending the structure group to $\rG_0$ defines a maximal polystable $(\rG_0,\fg_1)$-Higgs pair.

\begin{proposition}
  The $(\rG_0,\fg_1)$-Higgs pair $(E_\rT(\rG_0),e)$ is polystable and maximal.
\end{proposition}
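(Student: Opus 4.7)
The plan is to treat the two assertions separately: maximality by a direct computation of the Toledo invariant, and polystability by invoking the nonabelian Hodge correspondence together with the equivalence recorded at the end of \S\ref{sec moduli fixed}.

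\emph{Maximality.} Since $\{f,h,e\}$ is an even $\fsl_2$-triple with $h = 2\zeta$, JM-regularity gives $\rk_T(\rG_0,\fg_1) = B(\zeta,\zeta)B(\gamma,\gamma)$, so that
\[ \chi_T(h) = B(\zeta,h)B(\gamma,\gamma) = 2B(\zeta,\zeta)B(\gamma,\gamma) = 2\rk_T(\rG_0,\fg_1). \]
I would identify $\rT$ with $\C^*$ via $\lambda \mapsto \exp((\log\lambda)\,h/2)$ in the $\rPSL_2\C$ case, so that the adjoint action of $\rT$ on $\langle e \rangle$ has weight $1$; with this normalization $E_\rT$ is the frame bundle of $K^{-1}$, and the pullback of a lift $\chi_{T,q}$ of $q\chi_T$ to $\rG_0$ (supplied by Proposition \ref{prop JM reg toledo has rel invar}) restricts on $\rT$ to the character of $\C^*$ of weight $q\cdot\rk_T(\rG_0,\fg_1)$. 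Hence by \eqref{eq deg of central chacacter},
\[ \tau(E_\rT(\rG_0),e) = \tfrac{1}{q}\deg E_\rT(\chi_{T,q}|_\rT) = -\rk_T(\rG_0,\fg_1)(2g-2), \]
which matches the extremal value of Corollary \ref{cor am ineq for comp}. The $\rSL_2\C$ case is analogous: one parameterizes $\rT$ by $\lambda \mapsto \exp((\log\lambda)\,h)$ and replaces $K^{-1}$ by $K^{-1/2}$, so the weight doubles but the degree halves, yielding the same value of $\tau$.

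\emph{Polystability.} By Example \ref{ex uniformizing metric} the uniformizing Higgs bundle $(E_\rT,e)$ is polystable as an $\rS$-Higgs bundle, and via the nonabelian Hodge correspondence of \S\ref{sec:char-vari-vari} it corresponds to the Fuchsian representation $\rho_u : \pi_1(X) \to \rS^\R < \rS$ (or its lift to $\rSL_2\R$ when $\rS \cong \rSL_2\C$). Composition with the inclusion $\rS \hookrightarrow \rG$ produces a reductive representation $\pi_1(X) \to \rG$, whose associated polystable $\rG$-Higgs bundle is precisely $(E_\rT(\rG),e) = (E_\rT(\rG_0)(\rG),e)$. Since $(E_\rT(\rG_0),e)$ is by construction a Hodge bundle of type $(\rG_0,\fg_1)$, the last proposition of \S\ref{sec moduli fixed} then identifies its polystability as a $(\rG_0,\fg_1)$-pair with polystability of the associated $\rG$-Higgs bundle, completing the argument.

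The only nontrivial bookkeeping is the identification of $\rT$ with $\C^*$ and the resulting weight computation for $\chi_T|_\rT$, which looks different in the $\rPSL_2\C$ and $\rSL_2\C$ cases but is compensated in each case by the degree of $E_\rT$. Once this normalization is pinned down, both maximality and polystability follow directly from the machinery already established, and no further stability check is needed.
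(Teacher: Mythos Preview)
Your proof is correct. For maximality you are spelling out exactly what the paper means by ``by construction'': the weight computation for $\chi_T|_\rT$ and the degree of $E_\rT$ are precisely the content of that phrase, and your handling of the $\rPSL_2\C$ versus $\rSL_2\C$ normalization is accurate.

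For polystability the paper takes a shorter path: it simply asserts that $(E_\rT(\rG_0),e)$ is polystable because $(E_\rT,e)$ is. The implicit justification is that the metric $h_\rT$ solving the Hitchin equations for $(E_\rT,e)$ (Example~\ref{ex uniformizing metric}) solves the identical equation $F_h+[\varphi,-\tau(\varphi)]=0$ for the extended pair, since the curvature identity is literally the same, just read in the larger Lie algebra. Your detour through the character variety --- Fuchsian $\rho_u$, inclusion into $\rG$, reductivity, then the equivalence at the end of \S\ref{sec moduli fixed} --- is valid but one step longer; both routes ultimately rest on the same Hitchin--Kobayashi input, so there is no real difference in content.
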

\begin{proof}
  First note that $(E,\varphi)=(E_\rT(\rG_0),e)$ is polystable since $(E_\rT,e)$ polystable. 
  % By construction the $\varphi(x)\in\Omega$ for all $x\in X$, hence $\rk_T(\varphi)=\rk_T(h)=\rk_T(\rG_0,\fg_1).$ 
  By construction, we have $\tau(E,\varphi)=-\rk_{T}(\rG_0,\fg_1)(2g-2).$
  % \textcolor{red}{Alternatively, we can say $\varphi(x)\in\Omega$ for all $x$ so the  relative invariant implies Toledo invariant value}
\end{proof}

Other examples of maximal $(\rG_0,\fg_1)$-Higgs pairs are given by twisting the above uniformizing Higgs bundle by a holomorphic $\rC$-bundle. 
Let $E_\rC$ be a holomorphic $\rC$-bundle on $X$. Since $\rT$ and $\rC$ are commuting subgroups of $\rG_0$, the multiplication map $m:\rT\times\rC\to\rG_0$ is a group homomorphism. Thus, we can form a $\rG_0$-bundle 
\[E_\rT\otimes E_\rC(\rG_0)=(E_\rT\times E_\rC)\times_m \rG_0.\]
This is the principal bundle version of the tensor product of vector bundles. 
As with vector bundles, given metrics $h_\rT$ and $h_\rC$ on $E_\rT$ and $E_\rC$ respectively, there is a metric $h_\rT\otimes h_\rC$ on $E_\rT\otimes E_\rC$. Similarly, given connections $A_\rT$ and $A_\rC$ on $E_\rT$ and $E_\rC$ respectively, $A_\rT+A_\rC$ defines a connection on $E_\rT\otimes E_\rC$ and the curvature satisfies
\[F_{A_\rT+A_\rC}=F_{A_\rT}+F_{A_\rC}.\]

Since $\rC$ acts trivially on $\langle e\rangle,$ we have 
\[e\in H^0((E_\rT\otimes E_\rC)(\fg_1)\otimes K).\]
Also, since the Lie algebra $\fc$ is perpendicular to $\zeta$, the Toledo invariant is unchanged
\[\tau((E_\rT\otimes E_\rC)(\rG_0),e)=\tau(E_\rT(\rG_0),e).\]
Moreover, if $E_\rC$ is a polystable $\rC$-bundle\footnote{In particular, $E_\rC$ must have degree zero with respect to any character $\chi:\fc\to\C.$}, then there is a metric $h_\rC$ on $E_\rC$ whose associated Chern connection is flat (see Remark \ref{rem flat metric conn}). As a result, if $h_\rT$ is a metric on $E_\rT$ solving the Hitchin equations for $(E_\rT,e)$, then $h_\rT\otimes h_\rC$ solves the Hitchin equations for the Higgs bundle $(E_\rT\otimes E_\rC(\rG),e)$. 
Hence, if $E_\rC$ is a polystable $\rC$-bundle, then $(E_\rT\otimes E_\rC(\rG_0),e)$ is a polystable $(\rG_0,\fg_1)$-Higgs pair. 

So far, we have shown that there is a well defined map 
\begin{equation}\label{eq psie}
  \Psi_e:\xymatrix@R=0em{\cN(\rC)\ar[r]&\cM^{\max}(\rG_0,\fg_1)\\
E_\rC\ar@{|->}[r]&(E_\rT\otimes E_\rC(\rG),e)}
\end{equation}
from the moduli space $\cN(\rC)$ of polystable $\rC$-bundles to the moduli space of polystable $(\rG_0,\fg_1)$-Higgs pairs which are maximal. The map $\Psi_e$ from \eqref{eq psie} is a moduli space version of a restriction of the global Slodowy slice map from \cite{ColSandGlobalSlodowy} and generalizes a restriction of the Cayley correspondence in the case of a $\Z$-grading defining a Hermitian group \cite{BGRmaximalToledo}.
% We will show that $\Psi_e$ defines an isomorphism. 
\begin{theorem}\label{thm rigidity jm regular}
  The map $\Psi_e:\cN(\rC)\to\cM^{\max}(\rG_0,\fg_1)$ from \eqref{eq psie} defines an isomorphism between the moduli space of polystable $\rC$-bundles and the moduli space of polystable maximal $(\rG_0,\fg_1)$-Higgs pairs.
\end{theorem}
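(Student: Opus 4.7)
The strategy is to construct an explicit inverse to $\Psi_e$ on moduli spaces.

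I would first verify that $\Psi_e$ is well-defined and lands in the space of maximal polystable pairs. Given a polystable $\rC$-bundle $E_\rC$, the theorem of Narasimhan--Seshadri and Ramanathan provides a flat Chern connection on $E_\rC$. Combined with the uniformizing constant-curvature metric on $E_\rT$ (Example \ref{ex uniformizing metric}), this yields a solution to the Hitchin equations for $(E_\rT \otimes E_\rC(\rG_0), e)$: indeed $[e, -\tau(e)] = [e, f] = h = 2\zeta$ is balanced precisely by the constant curvature of the uniformizing factor, while the $\rC$-factor contributes flatly. The Toledo invariant equals $\deg_{\chi_T}(E_\rT) = -\rk_T(\rG_0, \fg_1)(2g-2)$, since $\chi_T$ vanishes on $\fc$. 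For injectivity, any isomorphism between images intertwining $e$ with itself must lie pointwise in the $\rG_0$-stabilizer of $e$, namely $\rC$, and therefore descends to an isomorphism of the underlying $\rC$-bundles.

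The heart of the proof is surjectivity. Given a maximal polystable pair $(E, \varphi)$, I first observe that equality in the Arakelov--Milnor inequality of Theorem \ref{thm: Arakelov--Milnor ineq} forces the relative invariant $F(\varphi)$ to be a nowhere-vanishing section of a degree-zero line bundle, hence $\varphi(x) \in \Omega$ for every $x \in X$. I then define a fiberwise subset
\[
(E_{\rT\rC})_x = \bigl\{\, p \in E_x \, : \, \varphi(x) = [p, e] \otimes \xi \text{ for some } \xi \in K_x^\times \,\bigr\} \subset E_x.
\]
Non-emptiness follows from $\varphi(x) \in \Omega$, and the ambiguity in $p$ (once $\xi$ is allowed to vary) is exactly the $\rG_0$-stabilizer of the line $\langle e \rangle$, which equals $\rT \cdot \rC$: here $\rT = \exp(\C\zeta)$ acts on $e$ by scalars, which can be absorbed into rescaling $\xi$, and $\rC$ is the stabilizer of $e$ itself. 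Hence $E_{\rT\rC}$ is a holomorphic principal $(\rT \cdot \rC)$-bundle, providing a canonical reduction of the structure group of $E$ to $\rT \cdot \rC \subset \rG_0$.

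Since $\rT$ and $\rC$ commute with finite intersection $\rT \cap \rC$, the bundle $E_{\rT\rC}$ splits, up to the finite-kernel ambiguity which is cohomologically trivial on a curve, as a tensor product $\tilde E_\rT \otimes E_\rC(\rG_0)$. The constraint $\varphi = [p, e] \otimes \xi$ on the reduction identifies the $\rT$-factor $\tilde E_\rT$ canonically with the frame bundle of $K^{-1}$ (respectively $K^{-1/2}$ for the $\rSL_2\C$ case), i.e.\ with the uniformizing $\rT$-bundle $E_\rT$. The remaining factor is the desired $\rC$-bundle, and by construction $\Psi_e(E_\rC) = (E, \varphi)$. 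Its polystability is inherited from that of $(E, \varphi)$: any $\rC$-destabilizing subbundle of $E_\rC$ would give a parabolic reduction of the ambient Higgs pair contradicting semistability, since the Higgs field lies in the $\rT$-direction centralized by $\rC$ and is therefore automatically preserved by any $\rC$-compatible reduction. I expect the main obstacle to be the rigorous construction of the $(\rT \cdot \rC)$-reduction: one must track the interplay between the $K$-twist and the orbit structure of $(\rG_0, \fg_1)$ so that $E_{\rT\rC}$ inherits a \emph{holomorphic} principal bundle structure (not just a continuous one), and handle the finite kernel $\rT \cap \rC$ when splitting off the $\rC$-factor. Once these points are settled, functoriality in families upgrades the set-theoretic bijection to the desired isomorphism of moduli spaces.
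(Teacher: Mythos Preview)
Your proposal is correct, and the overall architecture---use the relative invariant to force $\varphi(x)\in\Omega$ everywhere, extract a $\rC$-reduction, then check that polystability transfers in both directions---matches the paper. Injectivity via the stabilizer of $e$ and the polystability argument (parabolic reductions of $\rC$ induce parabolic reductions of $\rG_0$ preserving the Higgs field) are essentially identical to what the paper does.

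The one genuine difference is in how the $\rC$-bundle is extracted. You build a $(\rT\cdot\rC)$-reduction of $E$ by taking frames sending $e$ to $\varphi$ up to a $K^\times$-scalar, and then split off the $\rT$-factor; as you yourself flag, this splitting requires care with the finite intersection $\rT\cap\rC$ and with holomorphicity. The paper sidesteps this entirely with a pre-twist trick: since $\rT$ acts on $\fg_1$ by the tautological weight-one character, one has a canonical identification $E(\fg_1)\otimes K\cong(E_\rT^{-1}\otimes E)(\fg_1)$, so $\varphi$ may be regarded as an \emph{untwisted} section of $(E_\rT^{-1}\otimes E)(\fg_1)$ landing pointwise in $\Omega=\rG_0/\rC$. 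That is precisely the datum of a holomorphic $\rC$-reduction of the $\rG_0$-bundle $E_\rT^{-1}\otimes E$, and tensoring back by $E_\rT$ yields $E\cong(E_\rT\otimes E_\rC)(\rG_0)$ with $\varphi=e$ in one stroke. Your route works but costs you the splitting lemma; the paper's pre-twist is the cleaner move and dissolves exactly the obstacle you anticipated.
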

\begin{remark}
 For classical Lie groups, the centralizers $\rC<\rG$ of $\fsl_2$-triples is easily described in terms of partitions \cite{centralizerSpringerSteinberg} (see \cite[Theorem 6.1.3]{CollMcGovNilpotents}).
\end{remark}

\begin{proof}
  We first show that $\Psi_e$ is surjective. Let $(E,\varphi)$ be a polystable maximal $(\rG_0,\fg_1)$-Higgs pair. As above, let $E_\rT$ be the holomorphic frame bundle of $K^{-1}$ or $K^{-\frac{1}{2}}$ if $\rS$ is isomorphic to $\rPSL_2\C$ or $\rSL_2\C$ respectively.  Note that 
  \[E(\fg_1)\otimes K\cong (E_\rT^{-1}\otimes E)(\fg_1).\]  
Since $(\rG_0,\varphi)$ is JM-regular, we have $\rk_T(\rG_0,\fg_1)=\chi_T(\zeta),$ where $\zeta=\frac{h}{2}.$ 
Also, by Proposition \ref{prop JM reg toledo has rel invar}, a positive multiple of the Toledo character $q\cdot\chi_T(x)=qB(\gamma,\gamma)B(\zeta,x)$ lifts to a character $\chi_{T,q}:\rG_0\to\C^*$ which has a relative invariant $F:\fg_1\to\C$ of degree $q\cdot \rk_T(\rG_0,\fg_1).$ As in the proof of Theorem \ref{thm: Arakelov--Milnor ineq}, applying the relative invariant $F$ to the Higgs field defines a nonzero holomorphic section 
\[F(\varphi)\in H^0(E(\chi_{T,q})\otimes K^{q\rk_{T}(\rG_0,\fg_1)}).\]
The degree of $E(\chi_{T,q})\otimes K^{q\rk_{T}(\rG_0,\fg_1)}$ is $q(\tau(E,\varphi)+\rk_T(\rG_0,\fg_1)(2g-2))=0.$ Thus, $F(\varphi)$ is nowhere vanishing, and $\rk_T(\varphi(x))=\rk_T(\rG_0,\fg_1)$ for all $x\in X.$ That is, $\varphi(x)$ is in the open orbit $\Omega$ for all $x\in X.$

Thus the Higgs field $\varphi$ is defines a holomorphic section of $E(\Omega)\otimes K$. But $\Omega=\rG_0/\rC$ and $E(\Omega)\otimes K\cong (E_\rT^{-1}\otimes E)(\rG_0/\rC).$ Thus, the Higgs field defines a holomorphic reduction of structure group of $(E_\rT^{-1}\otimes E)$ to $\rC.$ Let $E_\rC$ be the resulting holomorphic $\rC$-bundle. Twisting both sides by $E_\rT$, we obtain an isomorphism 
\[E\cong (E_\rT\otimes E_\rC)(\rG_0).\]
By construction, $\varphi\in H^0((E_\rT\otimes E_\rC)(\langle e\rangle)\otimes K)$. Since a $(\rG_0,\fg_1)$-Higgs pairs $(E,\varphi)$ is isomorphic to $(E,\lambda\varphi)$ for all $\lambda\in\C^*,$ we can take $\varphi=e.$

To complete the proof of surjectivity, we show that if $(E,\varphi)=((E_\rT\otimes E_\rC)(\rG_0),e)$ is a (poly,semi)stable $(\rG_0,\fg_1)$-Higgs pair, then $E_\rC$ is a (poly,semi)stable bundle.  
Suppose that we have a reduction $(E_{\rP'_s},\sigma')$ of the structure group of $E_\rC$ to the parabolic subgroup $\rP'_s\subset \rC$ 
defined by a $s\in i\fc^\R$, where $\fc^\R$ is the Lie algebra of the maximal compact subgroup $\rC^\R$ of $\rC$. 
The element $s$ defines a parabolic subgroup $\rP_s\subset \rG_0$ as well, and using the map
  $\rC/\rP'_s\to \rG_0/\rP_s$ we obtain from $\sigma'$ a reduction $\sigma$ of the structure group of $E$ to $\rP_s$, resulting in a $\rP_s$-bundle
  $E_{\rP_s}$. Since $\rC$ stabilizes $e\in\fg_1$ we have $e\in \fg_{1,s}^0$.
Since $\langle s,h\rangle=0$, there is no contribution coming from the twisting $E\otimes E_\rT^{-1}$ in the computation of 
$\deg E(\sigma, s)$. Hence $\deg(E)(\sigma,s)=\deg(E_\rC)(\sigma',s)$, and (semi)stability of $(E,\varphi)$ implies the (semi)stability of $E_\rC$. 
For polystability, one must just check additionally that in the equality case, reduction for the Levi subgroup $\rL_s\subset \rP_s$ to a $E_{\rL_s}\subset E$ implies reduction for the Levi subgroup $\rL'_s\subset \rP'_s$, but it is sufficient to take $E_{\rL'_s}=(E_{\rL_s}\otimes E_{\rT}^{-1})\cap E_\rC$. 

For injectivity, note that two polystable $\rC$-bundles $E_\rC$ and $E_\rC'$ define the same point in $\cM^{\max}(\rG_0,\fg_1)$ if there is a holomorphic isomorphism of $\rG_0$-bundles
\[\Phi:E_\rC\otimes E_\rT(\rG_0)\to E_{\rC}'\otimes E_\rT(\rG_0)\]
such that $\Phi^*e=e.$ Since the $\rG_0$ stabilizer of $e\in\fg_1$ is $\rC,$ we conclude that $\Phi$ induces an isomorphism between $E_\rC$ and $E_\rC'.$ 
Note that injectivity implies that the automorphism group of $\Psi_e(E_\rC)$ is equal to the automorphism group of the polystable $\rC$-bundle $E_\rC.$ 
\end{proof}
The component count for the moduli space of polystable $\rC$-Higgs bundles is given the number of topologically distinct $\rC$-bundles on $X$ which have degree zero with respect to any character $\chi:\fc\to\C.$ For example, when $\rC$ is connected and $\Gamma<\pi_1(\rC)$ is the torsion subgroup, then $\cN(\rC)$ has $|\Gamma|$ connected components. As an immediate corollary, we have the following component count.
\begin{corollary}
  We have $\pi_0(\cM^{\max}(\rG_0,\fg_1))=\pi_0(\cN(\rC))$. In particular, the component count is given by the number of topological degree zero $\rC$-bundles on $X.$
\end{corollary}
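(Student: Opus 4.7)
The proof is essentially a direct consequence of Theorem \ref{thm rigidity jm regular}, so the plan is to unpack that isomorphism at the level of connected components and then match the resulting components to topological types of $\rC$-bundles.

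First, I would invoke Theorem \ref{thm rigidity jm regular} to obtain that $\Psi_e:\cN(\rC)\to\cM^{\max}(\rG_0,\fg_1)$ is an isomorphism of moduli spaces. Since $\Psi_e$ is in particular a homeomorphism, it induces a bijection on sets of connected components, giving the first equality $\pi_0(\cM^{\max}(\rG_0,\fg_1))=\pi_0(\cN(\rC))$. The only point to verify is that the construction $E_\rC\mapsto (E_\rT\otimes E_\rC(\rG_0),e)$ is continuous in families, which is clear from the fact that $E_\rT$ is fixed and the associated bundle construction together with the tautological choice $\varphi=e$ are algebraic operations on $E_\rC$.

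For the second statement, the plan is to recall that for a connected reductive complex Lie group $\rC$, the moduli space $\cN(\rC)$ of polystable $\rC$-bundles on $X$ is a disjoint union over the topological types of $\rC$-bundles compatible with the polystability constraint. Here the constraint is precisely that $\deg E_\rC(\chi)=0$ for every character $\chi:\rC\to\C^*$, equivalently (at the Lie algebra level) $\deg_\chi E_\rC=0$ for every $\chi:\fc\to\C$, since polystability of a reductive-group principal bundle requires vanishing of the degrees against central characters. The topological classification of principal $\rC$-bundles on a compact Riemann surface is by $\pi_1(\rC)$ via the first Chern class of a maximal torus reduction, and the degree-zero condition cuts out precisely the torsion subgroup $\Gamma\subset\pi_1(\rC)$. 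Each such topological type gives a nonempty connected component of $\cN(\rC)$ (for instance, the associated bundle construction from a representation of $\pi_1(X)$ into a maximal compact of $\rC$ realizes it). If $\rC$ is not connected one instead gets components indexed by the appropriate set of topological classes of $\rC$-bundles with trivial central degree.

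The main (minor) obstacle is the bookkeeping for non-connected centralizers $\rC$, which can arise for non-principal $\fsl_2$-triples; here one has to be careful that $\pi_0(\cN(\rC))$ is still described by topological types of $\rC$-bundles with vanishing degrees against central characters, using that $\rC$ fits in an extension of its component group by its identity component. In any case the statement only asserts a count by topological degree-zero $\rC$-bundles, which is precisely what $\pi_0(\cN(\rC))$ enumerates, so no further argument is needed beyond citing the standard classification of topological $\rC$-bundles on $X$.
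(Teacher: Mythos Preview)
Your proposal is correct and matches the paper's approach exactly: the paper states this as an immediate corollary of Theorem~\ref{thm rigidity jm regular}, noting only in the preceding sentence that the component count of $\cN(\rC)$ is given by the number of topologically distinct degree-zero $\rC$-bundles (with the connected case yielding $|\Gamma|$ components for $\Gamma$ the torsion subgroup of $\pi_1(\rC)$). Your write-up is in fact more detailed than what the paper provides.
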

%\textcolor{red}{Make some statements about $\C^*$-fixed points in $\cM(\rG^\R)$?}

We now use Theorem \ref{thm rigidity jm regular} to deduce rigidity results for variations of Hodge structure. Consider the complex linear involution $\theta:\fg\to\fg$ which is $(-1)^j\Id$ on each graded factor $\fg_j.$ Recall from \S\ref{sec real forms} that a compact real form $\tau:\fg\to\fg$ satisfies $\tau(\fg_j)=\fg_{-j}.$ Moreover, we can choose such a $\tau$ so that the $\fsl_2$-triple $\{f,h,e\}$ satisfies $f=-\tau(e).$ Let $\rG^\R<\rG$ be the associated real form of Hodge type. 
By construction, the subgroup $\rS<\rG$ defines a subgroup $\rS^\R<\rG^\R$ which is isomorphic to $\rP\rSL_2\R$ if $\rS\cong \rPSL_2\C$ and isomorphic to $\rSL_2\R$ if $\rS\cong\rSL_2\C.$
Also by construction, the $\rG^\R$-centralizer of $\rS^\R$ is the compact real form $\rC^\R<\rC$ of the centralizer of $\rS.$ 

Since $\rC^\R$ and $\rS^\R$ are commuting subgroups of $\rG^\R$, given representations $\rho_1:\pi_1(X)\to\rS^\R$ and $\rho_2:\pi_1(X)\to\rC^\R$, we can form a new representation, multiplying the images
\[\rho_1*\rho_2:\pi_1(X)\to\rG^\R,\ \ (\rho_1*\rho_2)(\gamma)=\rho_1(\gamma)\cdot\rho_2(\gamma).\]
Recall that $(\rG_0,\fg_1)$ is JM-regular if and only if it arises from an even $\fsl_2$-triple. 

\begin{theorem}\label{thm jm reg rigidity of reps for maximal}
  Let $\rG$ be a complex semisimple Lie group with Lie algebra $\fg$. Let $\{f,h,e\}\subset\fg$ be an even $\fsl_2$-triple and $\fg=\bigoplus_{j\in\Z}\fg_j$ be the associated $\Z$-grading with grading element $\frac{h}{2}$, let $\rS<\rG$ be the associated connected subgroup and let $\rC<\rG$ be the centralizer of $\{f,h,e\}.$ 
 Suppose $\rho:\pi_1(X)\to\rG$ is a reductive representation. The Higgs bundle associated to $\rho$ is a maximal Hodge bundle of type $(\rG_0,\fg_1)$ if and only if, up to conjugation, $\rho=\rho_u *\rho_\rC:\pi_1(X)\to\rG,$
where 
\begin{itemize}
  \item $\rho_u:\pi_1(X)\to\rS^\R<\rS$ is (a lift of) the uniformizing representation of $X$, and 
  \item $\rho_\rC:\pi_1(X)\to\rC^\R<\rC$ is any representation into the compact real form of $\rC.$
\end{itemize}   
\end{theorem}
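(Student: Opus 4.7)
The plan is to bootstrap off Theorem \ref{thm rigidity jm regular} (Theorem B) via the nonabelian Hodge correspondence. First, since $\rho$ is reductive and the associated Higgs bundle is a maximal Hodge bundle of type $(\rG_0,\fg_1)$, Theorem \ref{thm rigidity jm regular} gives a polystable $\rC$-bundle $E_\rC\in\cN(\rC)$ such that the Higgs bundle, as a $\rG$-Higgs bundle, is isomorphic to $(E_\rT\otimes E_\rC(\rG),e)$, where $E_\rT$ is the (square root of the) holomorphic frame bundle of $K^{-1}$ as in Example \ref{ex sl2 uniformizing}. The goal is then to identify the monodromy of the flat connection constructed from the Hitchin equation solution on this tensor product with the claimed product representation $\rho_u\ast\rho_\rC$.

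Next I would solve the Hitchin equations on the two factors separately. By Example \ref{ex uniformizing metric}, the uniformizing Higgs bundle $(E_\rT,e)$ admits a metric $h_\rT$ satisfying \eqref{eq Hitchin eq} whose associated flat connection has monodromy (a lift of) the Fuchsian uniformization representation $\rho_u:\pi_1(X)\to\rS^\R<\rS$. On the other side, $E_\rC$ is polystable and has degree zero with respect to every character of $\rC$, so by Ramanathan's theorem (Remark \ref{rem flat metric conn}) there is a metric $h_\rC$ whose Chern connection $A_{h_\rC}$ is flat and reduces to the compact real form $\rC^\R<\rC$. Since $\rT$ and $\rC$ commute in $\rG_0$, the tensor product metric $h:=h_\rT\otimes h_\rC$ is a well-defined metric on $E=E_\rT\otimes E_\rC(\rG_0)$ and its Chern connection satisfies $A_h=A_{h_\rT}+A_{h_\rC}$; because $\rC$ acts trivially on $\langle e\rangle$ and $\tau(e)=-f\in\fs$, the equation \eqref{eq Hitchin eq} for $(E,e)$ decouples into the Hitchin equation for $(E_\rT,e)$ plus the flatness equation $F_{A_{h_\rC}}=0$. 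Thus $h$ solves Hitchin's equations, so by Theorem \ref{thm kh corr Higgs} the Higgs bundle $(E(\rG),e)$ is polystable and the associated flat $\rG$-connection is
\[
D\;=\;(A_{h_\rT}+e-\tau(e))\;+\;A_{h_\rC}.
\]

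Since the two summands take values in commuting subalgebras $\fs$ and $\fc$ of $\fg$, and each factor already takes values in the real form $\rS^\R\times \rC^\R<\rG^\R$, the parallel transport splits as a commuting product and the holonomy representation is exactly
\[
\rho\;=\;\rho_u\ast\rho_\rC,
\]
where $\rho_u$ is the holonomy of $A_{h_\rT}+e-\tau(e)$ on $E_\rT(\rS)$ and $\rho_\rC$ is the holonomy of the flat $\rC^\R$-bundle $E_\rC$. Conjugacy of $\rho$ corresponds to isomorphism of Higgs bundles via the nonabelian Hodge correspondence, which accounts for the "up to conjugation" statement. This proves the "only if" direction. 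Conversely, given $\rho=\rho_u\ast\rho_\rC$ of the stated form, running the construction in reverse (take $E_\rC=\widetilde X\times_{\rho_\rC}\rC$, note it is polystable of degree zero, and apply $\Psi_e$) produces a maximal Hodge bundle of type $(\rG_0,\fg_1)$ whose associated representation is $\rho$ by the computation above; this gives the "if" direction.

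The main obstacle is verifying cleanly that the holonomy of the product flat connection is genuinely the product $\rho_u\ast\rho_\rC$ inside $\rG^\R$ rather than a central extension or a twist, in particular ensuring that $\rho_u$ really lands in $\rS^\R$ (not merely in a conjugate) and that the lift issue between $\rPSL_2\R$ and $\rSL_2\R$ is handled consistently with whether $E_\rT$ is built from $K^{-1}$ or $K^{-1/2}$. This is where the careful choice of compact real form $\tau$ with $\tau(\fg_j)=\fg_{-j}$ and $\tau(e)=-f$ from Section \ref{sec real forms} is essential: it guarantees that the $\fsl_2$-triple is a real $\fsl_2$-triple inside $\fg^\R$, so that the Hitchin connection for $(E_\rT,e)$ already has holonomy in $\rS^\R$, and simultaneously that $\rC^\R=\rC\cap\rG^\R$ is the compact real form of $\rC$, so that the flat connection on $E_\rC$ indeed has holonomy in $\rC^\R$.
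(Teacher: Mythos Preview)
Your proof is correct and follows essentially the same route as the paper: invoke Theorem \ref{thm rigidity jm regular} to write the maximal Hodge bundle as $(E_\rT\otimes E_\rC(\rG_0),e)$, solve the Hitchin equations with the tensor product metric $h_\rT\otimes h_\rC$, and read off the flat connection $A_{h_\rT}+e-\tau(e)+A_{h_\rC}$ whose holonomy splits as $\rho_u*\rho_\rC$. Your write-up is in fact somewhat more thorough than the paper's, since you explicitly treat the converse direction and flag the choice of compact real form $\tau$ needed to ensure the factors land in $\rS^\R$ and $\rC^\R$.
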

\begin{remark}
  Note that the representations $\rho=\rho_u*\rho_\rC:\pi_1(X)\to\rG$ all factor through the canonical real form of Hodge type $\rG^\R<\rG$ associated to the grading $\bigoplus_{j\in\Z}\fg_j$. Moreover, the $\rG^\R$-centralizer of any such representations is compact since the $\rG^\R$-centralizer of $\rS^\R$ is $\rC^\R$ which is compact. As a result, representations associated to maximal Hodge bundles of type $(\rG_0,\fg_1)$ do not factor through proper parabolic subgroups $\rP^\R<\rG^\R.$
\end{remark}
\begin{proof}
  The proof is immediate from the nonabelian Hodge correspondence and Theorem \ref{thm rigidity jm regular}. Namely, given an even $\fsl_2$-triple $\{f,h,e\}$ with associated grading $\bigoplus_{j\in\Z}\fg_j$, every polystable $(\rG_0,\fg_1)$-Higgs pair $(E,\varphi)$ with Toledo invariant $-\rk_T(\rG_0,\fg_1)(2g-2)$ is isomorphic to $(E_\rT\otimes E_\rC,e)$, where $(E_\rT,e)$ is a uniformizing Higgs bundle of $X$ and $E_\rC$ is a polystable $\rC$-bundle. 
  A solution to the Hitchin equations is given by $h_\rT\otimes h_\rC$, where $h_\rT$ solves the Hitchin equations for the uniformizing Higgs bundle $(E_\rT,e)$ and the Chern connection of $h_\rC$ is flat. 
  The associated flat connection is then 
  \[A_{h_\rT}+e-\tau(e)+A_{h_\rC}.\]

  As a result, the associated representation $\rho:\pi_1(X)\to\rG$ is given by $\rho_u*\rho_\rC$, where $\rho_u:\pi_1(X)\to\rS^\R<\rS$ is the uniformizing representation of $X$ if $\rS\cong\rPSL_2\C$ and a lift of the uniformizing representation if $\rS\cong\rSL_2\C$ and $\rho_\rC:\pi_1(X)\to\rC^\R<\rC$ is a representation into the compact real form of $\rC.$
\end{proof}

 Let $\Sigma$ be a closed topological surface of genus $g\geq 2.$ For $\rS^\R$ isomorphic to $\rPSL_2\R$ or $\rSL_2\R$, injective representations $\rho:\pi_1(\Sigma)\to\rS^\R$ with discrete image are called Fuchsian representations.  The set of conjugacy classes of Fuchsian representations defines an open and closed set in the character variety $\cR(\rS^\R)$, each connected component of which is identified with the Teichm\"uller space of the surface $\Sigma$ by the uniformization theorem.
\begin{corollary}
Suppose $(\rG_0,\fg_1)$ is JM-regular and let $\rho:\pi_1(\Sigma)\to\rG$ be a reductive representation. There exists a Riemann surface structure $X_\rho$ on $\Sigma$ such that the Higgs bundle associated to $\rho$ is a maximal Hodge bundle of type $(\rG_0,\fg_1)$ on $X_\rho$ if and only if 
\[\rho=\rho_{\mathrm{Fuch}}*\rho_{\rC^\R},\] where 
$\rho_{\mathrm{Fuch}}:\pi_1(\Sigma)\to\rS^\R$ is a Fuchsian representations and $\rho_{\rC^\R}:\pi_1(\Sigma)\to\rC^\R$ is a representation into the compact real form of $\rC.$ In particular, for any such representations, the associated Riemann surface $X_\rho$ is unique.
%  The set of representations $\rho:\pi_1\Sigma\to\rG$ for which there exists a Riemann surface structure $X_\rho$ on $\Sigma$ such that the associated Higgs bundle is a Hodge bundle of type $(\rG_0,\fg_1)$ with Toledo invariant $-\rk_T(\rG_0,\fg_1)(2g-2)$ is given by the set of representations $\rho=\rho_{\mathrm{Fuch}}*\rho_{\rC^\R}$, where 
% $\rho_{\mathrm{Fuch}}:\pi_1\Sigma\to\rS^\R$ is a Fuchsian representations and $\rho_{\rC^\R}:\pi_1\Sigma\to\rC^\R$ is a representation into the compact real form of $\rC.$ Moreover, for any such representations, the associated Riemann surface $X_\rho$ is unique.
\end{corollary}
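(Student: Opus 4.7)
The plan is to deduce this corollary directly from Theorem \ref{thm jm reg rigidity of reps for maximal} by observing that the only topological datum distinguishing the setting of Theorem~\ref{thm jm reg rigidity of reps for maximal} from that of the corollary is the choice of complex structure on $\Sigma$, and that this choice is encoded by the Fuchsian factor. For the forward implication, suppose $X_\rho$ is a Riemann surface structure on $\Sigma$ such that the associated Higgs bundle is a maximal Hodge bundle of type $(\rG_0,\fg_1)$. By Theorem~\ref{thm jm reg rigidity of reps for maximal}, $\rho$ is conjugate to $\rho_u*\rho_\rC$, where $\rho_u:\pi_1(X_\rho)\to\rS^\R$ is (a lift of) the representation uniformizing $X_\rho$ and $\rho_\rC:\pi_1(\Sigma)\to\rC^\R$. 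Since the uniformizing representation of a compact hyperbolic Riemann surface is discrete and faithful into $\rPSL_2\R$ (and any lift to $\rSL_2\R$ remains discrete and faithful), $\rho_u$ is Fuchsian. Thus the desired decomposition holds with $\rho_{\mathrm{Fuch}}=\rho_u$ and $\rho_{\rC^\R}=\rho_\rC$.

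For the converse, suppose $\rho=\rho_{\mathrm{Fuch}}*\rho_{\rC^\R}$. A Fuchsian representation $\rho_{\mathrm{Fuch}}:\pi_1(\Sigma)\to\rS^\R$ acts freely, properly discontinuously and cocompactly on $\mathbb{H}^2$ (by projecting to $\rPSL_2\R$ if $\rS^\R\cong\rSL_2\R$), so the quotient $\mathbb{H}^2/\rho_{\mathrm{Fuch}}(\pi_1(\Sigma))$ inherits a complex structure, and choosing an orientation preserving homeomorphism to $\Sigma$ equips the latter with a Riemann surface structure $X_\rho$. By construction $\rho_{\mathrm{Fuch}}$ is (a lift of) the uniformizing representation of $X_\rho$, so Theorem~\ref{thm jm reg rigidity of reps for maximal} applied to $X_\rho$ shows that the Higgs bundle attached to $\rho=\rho_{\mathrm{Fuch}}*\rho_{\rC^\R}$ on $X_\rho$ is a maximal Hodge bundle of type $(\rG_0,\fg_1)$.

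It remains to establish uniqueness of $X_\rho$. The key point is that in the product decomposition $\rho=\rho_{\mathrm{Fuch}}*\rho_{\rC^\R}$, the factor $\rho_{\mathrm{Fuch}}$ is determined by $\rho$ up to multiplication by a representation into the finite center $\rS^\R\cap\rC^\R$; this follows from the fact that $\rS^\R$ and $\rC^\R$ are commuting subgroups of $\rG^\R$ whose intersection lies in the center of $\rS^\R$, together with the uniqueness statement already built into Theorem~\ref{thm jm reg rigidity of reps for maximal} (which says the decomposition is unique up to conjugation). Multiplication by a central element of $\rS^\R$ does not affect the image of $\rho_{\mathrm{Fuch}}$ in $\rPSL_2\R$, so the Fuchsian action on $\mathbb{H}^2$ is unchanged, and hence so is the Riemann surface $X_\rho=\mathbb{H}^2/\rho_{\mathrm{Fuch}}(\pi_1(\Sigma))$.

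The main obstacle I expect is pinning down the uniqueness argument cleanly: one must be careful that the Fuchsian and compact factors in the product decomposition interact only through a finite center, which requires identifying $\rS^\R\cap\rC^\R$ with the centralizer of the $\fsl_2$-triple inside $\rS^\R$, and checking that this central ambiguity is absorbed when passing from $\rSL_2\R$ (or $\rPSL_2\R$) to the hyperbolic quotient. Once this is in hand, the equivalence of statements and the uniqueness of $X_\rho$ follow immediately from Theorem~\ref{thm jm reg rigidity of reps for maximal} and the classical uniformization theorem.
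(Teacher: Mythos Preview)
Your proposal is correct and follows the approach the paper intends: the corollary is stated in the paper without proof, as an immediate consequence of Theorem~\ref{thm jm reg rigidity of reps for maximal} together with the uniformization theorem (which the paper invokes just before the corollary to identify Fuchsian representations with points of Teichm\"uller space). Your argument fills in exactly these implicit details, including the care needed for uniqueness via the central intersection $\rS^\R\cap\rC^\R$, which the paper leaves unstated.
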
  

The following Corollary is also immediate from Theorem \ref{thm jm reg rigidity of reps for maximal}.

\begin{corollary}\label{cor JM reg max vhs}
Suppose $(\rG_0,\fg_1)$ is JM-regular. A variation of Hodge structure $(\rho,f_\rho)$ associated to a Hodge bundle of type $(\rG_0,\fg_1)$ is maximal if and only if $f_\rho:\widetilde X\to\rG^\R/\rH_0^\R$ is a totally geodesic embedding which maximizes the holomorphic sectional curvature. In particular, the image of $f_\rho$ is independent of $\rho$ and rigid.
\end{corollary}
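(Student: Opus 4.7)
The plan is to combine Theorem \ref{thm jm reg rigidity of reps for maximal} with the curvature analysis of Proposition \ref{prop:sectional_curvature} and Remark \ref{rem:holo-curv}, using the identification of the Higgs field with $\partial f_\rho$ from Remark \ref{rem higgs field is}.

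For the forward implication, I would start by assuming the Hodge bundle associated to $(\rho,f_\rho)$ is maximal. By Theorem \ref{thm jm reg rigidity of reps for maximal}, $\rho$ is conjugate to a product $\rho_u*\rho_\rC$, where $\rho_u:\pi_1(X)\to \rS^\R$ is (a lift of) the uniformizing representation and $\rho_\rC:\pi_1(X)\to\rC^\R$ lands in the compact real form of $\rC$. Since $\rC<\rG_0$ centralizes $\{f,h,e\}$, its compact form $\rC^\R$ sits inside $\rH_0^\R=\rG^\R\cap\rG_0$ and therefore fixes the basepoint $o\in D=\rG^\R/\rH_0^\R$. Consequently $f_\rho$ factors through the $\rS^\R$-orbit of $o$, which is isomorphic to $\rS^\R/(\rS^\R\cap\rH_0^\R)$, namely the upper half-plane, and is totally geodesic in $D$ because the subalgebra $\langle f,h,e\rangle$ is preserved by the Cartan involution $\theta$ of Proposition \ref{prop hodge real form of Zgrad} (which acts as $(-1)^j\Id$ on $\fg_j$). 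Moreover, since $e\in\Omega$ is regular, Proposition \ref{prop:sectional_curvature} together with Remark \ref{rem:holo-curv} gives $K_{\rm norm}(e)=-1/\rk_T(\rG_0,\fg_1)$, the maximum value, and so $f_\rho$ realizes this maximum.

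For the converse, I would assume $f_\rho$ is a totally geodesic embedding realizing the maximum of the normalized holomorphic sectional curvature. Transporting this back through the identification of $\partial f_\rho$ with $\varphi$, the Higgs field $\varphi(x)\in\fg_1$ must lie in the open orbit $\Omega$ for every $x\in X$, since by Proposition \ref{prop:sectional_curvature} the maximum of $K_{\rm norm}$ is attained only on that orbit. Applying the relative invariant supplied by Proposition \ref{prop JM reg toledo has rel invar} to $\varphi$ then produces a nowhere-vanishing section of $E(\chi_{T,q})\otimes K^{q\rk_T(\rG_0,\fg_1)}$, forcing its degree to vanish; this is exactly the equation $\tau(E,\varphi)=-\rk_T(\rG_0,\fg_1)(2g-2)$, so $(\rho,f_\rho)$ is maximal. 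The rigidity assertion follows directly from the structure displayed in the forward direction: the image $f_\rho(\widetilde X)$ is always the $\rS^\R$-orbit of the basepoint, hence depends only on the grading and, in particular, is independent of the choice of $\rho_\rC$.

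The main technical point that deserves care is the pointwise identification of the $\rG_0$-orbit of $\varphi(x)$ with the ``tangent orbit'' of $\partial f_\rho$ at $f_\rho(x)$, which is what allows Proposition \ref{prop:sectional_curvature} to transfer between the Higgs bundle and the equivariant map. Once one unpacks the Hitchin equations \eqref{eq Hitchin eq} and the description of the harmonic map in \S\ref{sec:char-vari-vari}, this reduces to the fact that the $(1,0)$-part of $df_\rho$ lifted to $\widetilde X$ corresponds to $\varphi$ after trivializing $E$ by the harmonic metric; verifying that this identification respects the $\rH_0^\R$-orbit structure is the only step that is not purely formal.
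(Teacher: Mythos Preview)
Your argument is correct and fills in the details that the paper omits: the paper simply declares the corollary ``immediate from Theorem \ref{thm jm reg rigidity of reps for maximal}'' without further comment. Your forward direction is exactly the intended one, and the geometric facts you invoke (that $\rC^\R\subset\rH_0^\R$ fixes the basepoint, and that the $\rS^\R$-orbit is a totally geodesic $\mathbb{H}^2$ tangent to the regular direction) are the content behind the word ``immediate''.

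For the converse you take a slightly different route than the paper presumably has in mind. The paper's implicit argument would run the forward direction backwards through Theorem \ref{thm jm reg rigidity of reps for maximal}: a totally geodesic holomorphic disc of maximal curvature must be the $\rS^\R$-orbit of the basepoint, so $\rho$ factors as $\rho_u*\rho_\rC$, hence the Hodge bundle is maximal. You instead bypass the representation-theoretic characterization and argue directly that curvature maximization forces $\varphi(x)\in\Omega$ pointwise (via Proposition \ref{prop:sectional_curvature}), and then feed this into the relative-invariant computation from the proof of Theorem \ref{thm rigidity jm regular} to recover $\tau(E,\varphi)=-\rk_T(\rG_0,\fg_1)(2g-2)$. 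This is a clean alternative: it avoids having to identify the image disc with the $\rS^\R$-orbit a priori, at the cost of repeating a small piece of the surjectivity argument from Theorem \ref{thm rigidity jm regular}. Note that in your version the totally geodesic hypothesis plays no role in the converse; only the curvature condition is used.
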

%\textcolor{red}{maybe say some words on why?}

\subsection{Some explicit examples}
We list some explicit examples for $(\rG_0,\fg_1)$ JM-regular.
\begin{example}\label{ex principal max Hodge}
   When $\rG_0<\rG$ is a Cartan subgroup, $(\rG_0,\fg_1)$ is a JM-regular phvs associated to a principal $\fsl_2$-triple. In this case, the group $\rC$ is the center of $\rG$. 
   The Higgs bundles $(E_\rC\otimes E_\rT(\rG),e)$ associated to maximal $(\rG_0,\fg_1)$-Higgs pairs are the image of $0$ under the Hitchin section \cite{liegroupsteichmuller}.
 \end{example} 
\begin{example}\label{ex Herm tube case maximal}
  For JM-regular phvs's $(\rG_0,\fg_1)$  associated to gradings $\fg=\fg_{-1}\oplus\fg_0\oplus\fg_1$, the real form $\rG^\R<\rG$ is a Hermitian Lie group of tube type (see Example \ref{ex real forms of examples}). In this case, the above results recover the subset of the results in \cite{BGRmaximalToledo}. In this case, there is a particular stratum of the boundary of the noncompact Riemannian symmetric space $\rG^\R/\rH^\R$ called the Shilov boundary. The group $\rC^\R$ is isomorphic to the stabilizer of a generic point in the Shilov boundary.
\end{example}

\begin{example}\label{ex so(2p,q) jm reg details}
  We now describe the JM-regular $(\rGL_p\C\times\rSO_q\C,M_{p,q})$ with $q\geq p$ from Examples \ref{ex phvs M_p,q}, \ref{ex parabolic type} and \ref{ex real forms of examples} in more detail.  Fix a decomposition $\C^{2p+q}=\C^p\oplus\C^q\oplus\C^p$ with orthogonal structure $Q(x,y,z)=xz^T+x^Tz+yy^T$, then we may write $\fso_{2p+q}\C$ as 
  \[\fso_{2p+q}\C=\{\begin{pmatrix}
    A&W&X\\Y&B&-W^T\\Z&-Y^T&-A^T
  \end{pmatrix}~|~\vcenter{\xymatrix@=0em{A\in M_{p,p},~ B=-B^T\in M_{q,q},~ W\in M_{p,q}~\\ X=-X^T\in M_{p,p},~ Y\in M_{q,p},~ Z=-Z^T\in M_{p,p}}}\}\]
  The even $\fsl_2$-triple is given by 
  \[f=\begin{pmatrix}
    0&0&0\\\smtrx{2\Id_p\\0}&0&0\\0&\smtrx{-2\Id_p&0}&0
  \end{pmatrix}, ~ h=\begin{pmatrix}
    2\Id_p&0&0\\0&0&0\\0&0&-2\Id_p
  \end{pmatrix},~e=\begin{pmatrix}
    0&\smtrx{\Id_p&0}&0\\0&0&\smtrx{-\Id_p\\0}\\0&0&0
  \end{pmatrix},\]
  where $\Id_p$ is the $p\times p$ identity matrix. The $\Z$-grading is $\fg_{-2}\oplus\fg_1\oplus\fg_0\oplus\fg_1\oplus\fg_2$ where the space $\fg_j$ is the $j^{th}$ super block diagonal. 

  The subgroup $\rG_0<\rSO_{2p+q}\C$ is $\rGL_p\C\times\rSO_q\C$ and the $\rG_0$-stabilizer of $e$ is given by 
  \[\rC=\rG_0^e=\{\begin{pmatrix}
    A&0&0\\0&\smtrx{A&0\\0&B}&0\\0&0&(A^{-1})^T
  \end{pmatrix}~|~AA^T=\Id \text{ and } BB^T=\Id\}\cong\rS(\rO_p\C\times\rO_{q-p}\C).\]

  Using the standard representation of $\rSO_{2p+q}\C$, an $\rSO_{2p+q}\C$-Higgs bundle is a triple $(E,Q,\Phi)$, where $E$ is a rank $2p+q$ holomorphic bundle with trivialized determinant bundle, $Q$ is an everywhere nondegenerate symmetric bilinear form on $E$ and $\Phi\in H^(\End(E)\otimes K)$ is $K$-twisted endomorphism of $E$ which is skew symmetric with respect to $Q.$ 
  An $\rSO_{2p+q}\C$ is a Hodge bundle of type $(\rG_0,\fg_1)$ if $E$ splits holomorphically as $E=V\oplus W\oplus V^*$ where $V$ is a rank $p$ isotropic subbundle and $W$ is a rank $q$ orthogonal subbundle, and with respect to this splitting we have 
  \begin{equation}
    \label{eq higgs field form}\Phi=\begin{pmatrix}0&\theta&0\\0&0&-\theta^T\\0&0&0
  \end{pmatrix}:V\oplus W\oplus V^*\to(V\oplus W\oplus V^*)\otimes K
  \end{equation}
    
  Such a Higgs bundle is a maximal $(\rG_0,\fg_1)$-Higgs pair if and only if $W$ decomposes holomorphically and orthogonally as $W=U_p\oplus U_{q-p}$ and $V=U_p\otimes K^{-1}$ and 
  \[\theta=\begin{pmatrix}
    \Id_{U_p}&0
  \end{pmatrix}:W=U_p\oplus U_{q-p} \to U_p=V\otimes K\]
  
  In this example, the group $\rC=\rS(\rO_p\C\times \rO_{q-p}\C)$ and the space of such Higgs bundles is parameterized by the polystable $\rS(\rO_p\C\times \rO_{q-p}\C)$ bundle $U_p\oplus U_{q-p}.$ These Higgs bundles reduce to $\rSO(2p,q)$-Higgs bundles and the resulting representations $\rho:\pi_1(X)\to\rSO(2p,q)$ are conjugate to $\rho_u*\rho_{\rC^\R}$ where $\rho_u:\pi_1(X)\to\rPSL_2\R$ is the uniformizing representation of $X$ and $\rho_{\rC^\R}:\pi_1(X)\to\rS(\rO_p\times\rO_{q-p})$ is any representation. When $p=1,$ this is agrees with Example \ref{ex Herm tube case maximal} for the Hermitian Lie group $\rSO(2,q).$
  \end{example}
  \begin{remark}
      For the JM-regular phvs $(\rS(\rGL_p\C\times\rGL_q\C\times \rGL_p\C),M_{p,q}\oplus M_{q,p})$ with $q\geq p$ from Examples \ref{ex phvs M_p,q}, \ref{ex parabolic type} and \ref{ex real forms of examples}, the moduli space $\rSL_{2p+q}\C$-Higgs bundles which are maximal Hodge bundles of type $(\rG_0,\fg_1)$ can be described similarly. Namely, they are given by pairs $(E,\Phi)$ where $E$ is a holomorphic rank $2p+q$ vector bundle with trivialized determinant and $\Phi$ is a traceless $K$-twisted endomorphism of $E$ such that $E=(U_p\otimes K^{-1})\oplus U_p\oplus U_{q-p}\oplus (U_p\otimes K)$, $\det(U_p)^3=\det(U_{q-p})$ and 
      \[\Phi=\begin{pmatrix}0&\Id_p&0&0\\0&0&0&\Id_p\\0&0&0&0\\0&0&0&0
      \end{pmatrix}:E\to E\otimes K.\]
  \end{remark}

\begin{example}
There is a class of nilpotent elements called distinguished nilpotents which play an important role in the Bala--Carter theory classification of nilpotent orbits in $\fg$. A nilpotent $e\in\fg$ is distinguished if and only if the centralizer of an associated $\fsl_2$-triple $\rC<\rG$ is discrete. 
Such $\fsl_2$-triples are necessarily even, and hence the resulting moduli space of maximal JM-regular $(\rG_0,\fg_1)$-Higgs pairs is discrete. In particular, the associated $\C^*$-fixed points in the $\rG$-Higgs bundle moduli space are isolated. 

The principal $\fsl_2$-triples are always distinguished and give rise to the isolated fixed points from Example \ref{ex principal max Hodge}. In type $\rA$, this is the only distinguished nilpotent. However, in all other types there are many distinguished nilpotents. For example, for $\fg=\fso_n\C$ a nilpotent is distinguished if and only if it is associated a partition $n=1\cdot n_1+\cdots +1\cdot n_k,$ where each $n_j$ is odd. 
For $\fg=\fsp_{2n}\C$ a nilpotent is distinguished if and only if it is associated to a partition $2n=1\cdot n_1+\cdots +1\cdot n_k$, with each $n_j$ even. Moreover, for the Lie group $\rG_2$ there are two even $\fsl_2$-triples and both are distinguished. See \cite[\S6 \&\S8]{CollMcGovNilpotents} for more details. This means that the moduli space of $\rG$-Higgs bundles has isolated fixed points which do not arise from the Hitchin section when $\fg$ is not of type $\rA.$
\end{example}

\begin{example}
 In \cite{MagicalBCGGO}, a class of even nilpotents is identified called magical nilpotents. For such even $\fsl_2$-triples, there is a real form $\rG^\R<\rG$ such that the associated maximal $(\rG_0,\fg_1)$-Higgs pairs define $\rG^\R$-Higgs bundles which are local minima of the energy function on the moduli space $\rG^\R$-Higgs bundles. For such Hodge bundles, the global Slodowy slice map from \cite{ColSandGlobalSlodowy} descends to moduli spaces and describes connected components of the moduli space $\cM(\rG^\R)$ with many interesting properties. In particular, the associated components of the character variety $\cR(\rG^\R)$ define the higher Teichm\"uller components conjectured by Guichard--Labourie--Wienhard \cite{PosRepsGLW,PosRepsGWPROCEEDINGS}. There are four families of magical nilpotents two of which appear in Examples \ref{ex principal max Hodge} and \ref{ex Herm tube case maximal}.
\end{example}

\subsection{The non JM-regular case}\label{sec non JM reg max Hodge}
We now describe the moduli space of maximal $(\rG_0,\fg_1)$-Higgs pairs when $(\rG_0,\fg_1)$ is not JM-regular. 
Assume $(\rG_0,\fg_1)$ is not a JM-regular phvs. Let $\Omega\subset\fg_1$ be the open $\rG_0$-orbit. For $e\in\Omega$ we complete it to an $\fsl_2$-triple $\{f,h,e\}$ with $h\in\fg_0$ and set $s=\zeta-\frac{h}{2}$. Recall $s\neq0$ exactly when $(\rG_0,\fg_1)$ is not JM-regular. 
A maximal JM-regular phvss $(\hat\rG_0,\hat\fg_1)$ for $(\rG_0,\fg_1)$ is given by letting $\hat\rG_0<\rG_0$ be the $\rG_0$-centralizer of $s$ and $\hat\fg_1=\{x\in\fg_1~|~[s,x]=0\}$. Recall also that $s$ determines a parabolic subgroup $\rP_s<\rG_0$ and $\hat\rG_0<\rP_s$ is the Levi subgroup.

\begin{proposition}\label{prop no stable reduce to max JMreg}
  Assume $(\rG_0,\fg_1)$ is not JM-regular and let $(\hat\rG_0,\hat\fg_1)$ be a maximal JM-regular prehomogeneous vector subspace. Then, there are no stable $(\rG_0,\fg_1)$-Higgs pairs which are maximal. Moreover, every maximal polystable $(\rG_0,\fg_1)$-Higgs pair with Toledo invariant reduces to a polystable maximal $(\hat\rG_0,\hat\fg_1)$-Higgs pair.
 \end{proposition}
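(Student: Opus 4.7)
The approach is to analyze the equality case in the chain of inequalities from the proof of Theorem \ref{thm: Arakelov--Milnor ineq}, taking $\alpha=0$. First, observe that if $(E,\varphi)$ is polystable with $\tau(E,\varphi)=-\rk_T(\rG_0,\fg_1)(2g-2)$, then Corollary \ref{cor am ineq for comp} forces $\rk_T(\varphi)=\rk_T(\rG_0,\fg_1)$, so the generic value $\varphi(x)$ lies in the open orbit $\Omega\subset\fg_1$. Pick $e\in\Omega$ with an $\fsl_2$-triple $\{f,h,e\}$, $h\in\fg_0$, set $s=\zeta-h/2$, and let $(\hat\rG_0,\hat\fg_1)$ be the associated maximal JM-regular phvss; because $(\rG_0,\fg_1)$ is not JM-regular, $s\neq 0$, and $\hat\rG_0=\rL_s$ is the Levi of $\rP_{0,e}=\rP_s<\rG_0$ by Proposition \ref{prop: s defines parabolic of g0}.

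The next step is to revisit the construction from the proof of Theorem \ref{thm: Arakelov--Milnor ineq}: the orbit structure of $\varphi$ yields a canonical holomorphic reduction $\sigma$ of $E$ to $\rP_{0,e}$ with $\varphi\in H^0(E_\sigma(\fg_{1,s})\otimes K)$, and one has the decomposition
\[\tau(E,\varphi)=\deg E(\sigma,B(\gamma,\gamma)s)+\deg_{\hat\chi_T}(E_\sigma).\]
Semistability gives $\deg E(\sigma,B(\gamma,\gamma)s)\geq 0$, while Proposition \ref{prop JM reg toledo has rel invar}, applied to the $\hat\rG_0$-bundle obtained from $\sigma$ via Levi projection (through the relative invariant $F(\varphi)$), yields $\deg_{\hat\chi_T}(E_\sigma)\geq -\rk_T(\rG_0,\fg_1)(2g-2)$. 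For $(E,\varphi)$ to be maximal, both summands must saturate their bounds. The equality $\deg E(\sigma,B(\gamma,\gamma)s)=0$, combined with the fact that $s\neq 0$ acts nontrivially on $\fg_1$ (placing $s\in i\fk_{0,\rho}^\R$), contradicts the strict inequality required by stability (Definition \ref{def: stability}), proving the first assertion that no maximal pair is stable.

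For the second assertion, polystability of $(E,\varphi)$ applied to $\sigma$ with $B(\gamma,\gamma)s$ produces a further reduction $\sigma'\in H^0(E_\sigma(\rP_s/\rL_s))$ with $\varphi\in H^0(E_{\sigma'}(\fg_{1,s}^0)\otimes K)$. Since $\fg_{1,s}^0=\hat\fg_1$, the data $(E_{\sigma'},\varphi)$ is a $(\hat\rG_0,\hat\fg_1)$-Higgs pair in the sense of Definition \ref{def higgs pair reduction}. Its Toledo invariant equals $\deg_{\hat\chi_T}(E_{\sigma'})=\deg_{\hat\chi_T}(E_\sigma)=-\rk_T(\rG_0,\fg_1)(2g-2)$, and because $\varphi$ is generically valued in the open $\hat\rG_0$-orbit $\hat\Omega\subset\hat\fg_1$, the identity $\rk_T(\hat\rG_0,\hat\fg_1)=\rk_T(e)=\rk_T(\rG_0,\fg_1)$ shows the sub-pair is maximal. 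Polystability of the sub-pair I would then deduce by transporting any destabilizing reduction within $\hat\rG_0$ up to one within $\rG_0$, using the Levi inclusion $\hat\rG_0<\rG_0$, and invoking polystability of $(E,\varphi)$.

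The main obstacle I anticipate is this last polystability-lifting step: one must check that for $\hat s\in i\hat\fk_{0,\rho}^\R$ and a reduction $\hat\sigma$ of $E_{\sigma'}$ to $\rP_{\hat s}^{\hat\rG_0}$ satisfying $\varphi\in H^0(E_{\hat\sigma}(\hat\fg_{1,\hat s})\otimes K)$, the induced reduction of $E$ to $\rP_{\hat s}^{\rG_0}$ has degree equal to $\deg E_{\sigma'}(\hat\sigma,\hat s)$ (so that the stability inequalities transfer), and that Levi reductions obtained upstairs restrict back to Levi reductions of the sub-pair. Both points come down to the restriction behaviour of the Killing form of $\fg$ on the reductive subalgebra $\hat\fg$ and to Chern--Weil computations for the nested structure-group reductions $\hat\rQ\hookrightarrow\hat\rG_0\hookrightarrow\rP_s\hookrightarrow\rG_0$; while technical, these are routine given the explicit nature of $\hat\rG_0$ as a Levi subgroup.
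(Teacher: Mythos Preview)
Your argument follows essentially the same route as the paper's proof: analyze the equality case of the Arakelov--Milnor inequality (Theorem \ref{thm: Arakelov--Milnor ineq} with $\alpha=0$), obtain the canonical reduction $\sigma$ to $\rP_s$, observe that maximality forces $\deg E(\sigma,B(\gamma,\gamma)s)=0$, and then use $s\neq 0$ to contradict stability and invoke polystability for the Levi reduction to $(\hat\rG_0,\hat\fg_1)$. In fact you are more explicit than the paper about why the reduced pair is maximal (via $\rk_T(\hat\rG_0,\hat\fg_1)=\rk_T(e)=\rk_T(\rG_0,\fg_1)$) and about the polystability-lifting step, which the paper simply asserts without further comment.
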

\begin{proof}
  Let $(E,\varphi)$ be a polystable maximal $(\rG_0,\fg_1)$-Higgs pair. Since $(E,\varphi)$ is maximal, $\varphi(x)$ is in the open orbit $\Omega$ for generic $x\in X.$ Let $e\in\Omega$, $\{f,h,e\}$ be an $\fsl_2$-triple with $h\in\fg_0$ and $(\hat\rG_0,\hat\fg_1)$ be the associated maximal JM-regular phvss of $(\rG_0,\fg_1).$ Let $s=\zeta-\frac{h}{2}$, and $\rP_s<\rG_0$ be the associated parabolic subgroup.
  As in the proof of Theorem \ref{thm: Arakelov--Milnor ineq}, the Higgs field defines a reduction of structure group $\sigma$ of $E$ to a $\rP_s$-bundle $E_\sigma$ such that $\varphi\in H^0(E_\sigma(\fg_{1,s})\otimes K).$ 
  Since $(E,\varphi)$ is maximal, we have $\deg E(\sigma,s)=0$. Hence $(E,\varphi)$ is not stable and there is a holomorphic reduction $\hat\sigma$ to the Levi subgroup $\hat\rG_0<\rP_s$ with $\varphi\in H^0(E_{\hat\sigma}(\hat\fg_1)\otimes K.$ Thus, $(E,\varphi)$  reduces to a polystable maximal $(\hat\rG_0,\hat\fg_1)$-Higgs pair.
\end{proof}

Let $\hat\rC<\hat\rG_0$ be the $\hat\rG_0$-stabilizer of the $\fsl_2$-triple $\{f,h,e\}$. 
Note that $\{f,h,e,s\}\subset\fg$ generate a subalgebra isomorphic to $\fgl_2\C$, and $\hat\rC$ is the $\rG$-centralizer of the $\{f,h,e,s\}.$ 
% \begin{proposition}
%   If $(\rG_0,\fg_1)$ is regular, then $\hat\rC$ is the $\rG_0$-stabilizer of $e\in\fg_1.$
% \end{proposition}
% \begin{proof}
%   F
% \end{proof}
As in the JM-regular case, extending the structure group of the uniformizing Higgs bundle $(E_\rT,e)$ to $\rG_0$ defines a polystable maximal $(\rG_0,\fg_1)$-Higgs pair
$(E_\rT(\rG_0),e)$.
Moreover, if $E_{\hat\rC}$ is a polystable $\hat\rC$-bundle, then $(E_T\otimes E_{\hat\rC}(\rG_0),e)$ is a polystable maximal $(\rG_0,\fg_1)$-Higgs pair. 
Thus we have a map 
\begin{equation}
  \label{eq: Psi hat e map}
  \hat\Psi_e:\xymatrix@R=0em{\cN(\hat\rC)\ar[r]&\cM^{\max}(\rG_0,\fg_1)\\E_{\hat\rC}\ar@{|->}[r]&(E_\rT\otimes E_{\hat\rC}(\rG_0),e)}
\end{equation}
from the moduli space of polystable $\hat\rC$-bundles on $X$ to the moduli space of maximal $(\rG_0,\fg_1)$-Higgs pairs.

\begin{theorem}\label{thm param of nonJM reg max}
If $(\rG_0,\fg_1)$ is not JM-regular, then map $\hat\Psi_e:\cN(\hat\rC)\to\cM^{\max}(\rG_0,\fg_1)$ from \eqref{eq: Psi hat e map} defines an isomorphism between the moduli space of polystable $\hat\rC$-bundles and the moduli space of polystable maximal $(\rG_0,\fg_1)$-Higgs pairs. 
\end{theorem}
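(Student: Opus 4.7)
The plan is to reduce to the JM-regular case already proved. The map $\hat\Psi_e$ factors as
\[
\cN(\hat\rC) \xrightarrow{\;\Psi_e^{\mathrm{JM}}\;} \cM^{\max}(\hat\rG_0,\hat\fg_1) \xrightarrow{\;\mathrm{Ext}\;} \cM^{\max}(\rG_0,\fg_1),
\]
where $\Psi_e^{\mathrm{JM}}$ is the map $E_{\hat\rC}\mapsto(E_\rT\otimes E_{\hat\rC}(\hat\rG_0),e)$ and $\mathrm{Ext}$ is extension of structure group from $\hat\rG_0$ to $\rG_0$. By construction $(\hat\rG_0,\hat\fg_1)$ is JM-regular and $\hat\rC$ is the $\hat\rG_0$-centralizer of the $\fsl_2$-triple $\{f,h,e\}$, so Theorem \ref{thm rigidity jm regular} applied to $(\hat\rG_0,\hat\fg_1)$ gives that $\Psi_e^{\mathrm{JM}}$ is already an isomorphism. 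It therefore suffices to prove that $\mathrm{Ext}$ is a bijection.

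Surjectivity of $\mathrm{Ext}$ is exactly Proposition \ref{prop no stable reduce to max JMreg}. For injectivity, let $(\hat E_i,\varphi_i)$, $i=1,2$, be polystable maximal $(\hat\rG_0,\hat\fg_1)$-Higgs pairs and let $\Phi\colon\hat E_1(\rG_0)\to\hat E_2(\rG_0)$ be an isomorphism of $(\rG_0,\fg_1)$-Higgs pairs. Maximality forces each $\varphi_i$ to lie generically in the open orbit $\Omega\subset\fg_1$, so the construction in the proof of Proposition \ref{prop no stable reduce to max JMreg} produces canonical holomorphic reductions of $\hat E_1(\rG_0)$ and $\hat E_2(\rG_0)$ to the parabolic subgroup $\rP_s<\rG_0$ associated to $s=\zeta-\tfrac{h}{2}$ via Proposition \ref{prop: s defines parabolic of g0}. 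Canonicity forces $\Phi$ to respect these reductions and to restrict to an isomorphism of $\rP_s$-bundles $\hat E_1(\rP_s)\to\hat E_2(\rP_s)$. Since $\hat\rG_0$ is the Levi factor of $\rP_s$, the quotient-by-unipotent-radical functor $E\mapsto E/\rU_s$ sends $\hat E_i(\rP_s)=\hat E_i\times_{\hat\rG_0}\rP_s$ back to $\hat E_i$, and hence converts $\Phi$ into the desired isomorphism $\hat E_1\to\hat E_2$ of $\hat\rG_0$-bundles. The Higgs fields correspond because $\varphi_i$ was, by hypothesis, already a section of $\hat E_i(\hat\fg_1)\otimes K$, on which the Levi projection acts as the identity.

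The main obstacle I foresee is justifying the canonicity of the $\rP_s$-reduction, i.e.\ that it depends only on the isomorphism class of $(\hat E(\rG_0),\varphi)$ and not on the auxiliary choices of $e\in\Omega$ or of an $\fsl_2$-completion $\{f,h,e\}$. This should follow from Proposition \ref{prop all max Jm reg G0-conjugate}, which shows that any two such $\fsl_2$-completions arising from a single $\rG_0$-orbit in $\Omega$ are $\rG_0$-conjugate, combined with the intrinsic description of $\rP_{0,e}$ in Proposition \ref{prop: s defines parabolic of g0} (which depends only on the conjugacy class of the semisimple element $s$). Once canonicity is secured, the remaining bookkeeping—compatibility of $\mathrm{Ext}$ with polystability in both directions and matching of automorphism groups—proceeds in parallel with the proof of Theorem \ref{thm rigidity jm regular}.
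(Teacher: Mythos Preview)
Your proof is correct and takes a genuinely different route from the paper's. Both arguments share the surjectivity step (Proposition \ref{prop no stable reduce to max JMreg}), but diverge on injectivity.

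The paper argues injectivity directly at the level of $\hat\rC$-bundles: given an isomorphism $\Phi:E_{\hat\rC}\otimes E_\rT(\rG_0)\to E'_{\hat\rC}\otimes E_\rT(\rG_0)$ preserving $e$, it asserts that $\Phi$ must act trivially on the $E_\rT$-factor, hence centralize $h$; combined with $\Phi^*e=e$ this forces $\Phi$ into the $\rG_0$-centralizer of $\{h,e\}$, which by Jacobson--Morozov uniqueness equals $\hat\rC$. This is short but the step ``$\Phi$ acts trivially on $E_\rT$'' is left somewhat implicit.

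Your approach instead factors through $\cM^{\max}(\hat\rG_0,\hat\fg_1)$ and reduces everything to the already-proved JM-regular theorem plus injectivity of the extension-of-structure-group map. Your injectivity mechanism---canonical $\rP_s$-reduction from the Higgs field, followed by Levi projection---is more structural and makes the geometry transparent. The canonicity concern you flag is real but resolves exactly as you indicate: the parabolic $\rP_{0,e}$ depends only on $e$ (not on the completing triple), and is $\rG_0$-equivariant, so the reduction is intrinsic to $(E,\varphi)$. One point worth making explicit is that the canonical $\rP_s$-reduction of $\hat E_i(\rG_0)$ coincides with $\hat E_i(\rP_s)$: in a $\hat E_i$-trivialization $\varphi_i(x)\in\hat\Omega=\hat\rG_0\cdot e$, and since $\hat\rG_0\subset\rP_s$ normalizes $\rP_s$, the parabolic $\rP_{0,\varphi_i(x)}$ is constantly $\rP_s$. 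Likewise, that the Levi projection preserves the Higgs field follows from a weight argument: writing $\Phi$ locally as a Levi part times a unipotent part, the unipotent contribution to $\Phi\cdot\varphi_1$ lands in strictly positive $s$-weight pieces of $\fg_1$, which must vanish since $\varphi_2\in\hat\fg_1$. What your approach buys is that the argument is uniform and avoids any ad hoc claim about $E_\rT$; what the paper's approach buys is brevity.
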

\begin{proof}
Surjectivity of the map $\hat\Psi_e$ follows from Proposition \ref{prop no stable reduce to max JMreg} and the proof of surjectivity in JM-regular case. For injectivity, note that two polystable $\hat\rC$-bundles $E_{\hat\rC}$ and $E'_{\hat\rC}$ define the same point in $\cM^{\max}(\rG_0,\fg_1)$ if there is a holomorphic isomorphism of $\rG_0$-bundles
\[\Phi:E_{\hat\rC}\otimes E_\rT(\rG_0)\to E_{\hat\rC}'\otimes E_\rT(\rG_0)\]
such that $\Phi^*e=e.$ Unlike the JM-regular case, the $\rG_0$-stabilizer of $e$ is not $\hat\rC$. However,  $E_{\hat\rC}\otimes E_\rT(\rG_0)\cong E_{\hat\rC}'\otimes E_\rT(\rG_0)$ implies that $\Phi$ acts trivially on $E_\rT$. 
In particular, $\Phi$ is valued in the $\rG_0$-centralizer of $\{h,e\}$. By the uniqueness part of the Jacobson--Morozov theorem, this is the $\rG_0$-centralizer of the $\fsl_2$-triple $\{f,h,e\}$ which is $\hat\rC.$ Hence $\Phi$ induces a holomorphic isomorphism of the $\hat\rC$-bundles.
\end{proof}

The rigidity results for variations of Hodge structure in the JM-regular cases have analogues in the non JM-regular setting. 
Namely, Consider the Cartan involution $\theta:\fg\to\fg$ which is $(-1)^j\Id$ on $\fg_j$ and choose the compact real form $\tau:\fg\to\fg$ such that $\tau(\fg_j=\fg_{-j}$ and $\tau(e)=-f.$ 
Let $\rG^\R<\rG$, $\rS^\R<\rS$, $\hat\rC^\R<\hat\rC$ the associated real forms. The proof of the following theorem is immediate.

\begin{theorem}\label{rigidity non JM reg case}
  Suppose $(\rG_0,\fg_1)$ is not JM-regular, and $\rho:\pi_1(X)\to\rG$ is a reductive representation. The Higgs bundle associated to $\rho$ is a maximal Hodge bundle of type $(\rG_0,\fg_1)$ if and only if, up to conjugation, $\rho=\rho_u*\rho_{\hat\rC}:\pi_1(X)\to\rG$, where
  \begin{itemize}
    \item $\rho_u:\pi_1(X)\to\rS^\R<\rS$ is (a lift of) the uniformizing representation of $X$, and
    \item $\rho_{\hat\rC}:\pi_1(X)\to\hat\rC^\R<\hat\rC$ is any representation into the compact real form of $\hat\rC.$
  \end{itemize}
\end{theorem}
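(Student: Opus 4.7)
The proof plan follows the exact template of Theorem \ref{thm jm reg rigidity of reps for maximal}, now invoking Theorem \ref{thm param of nonJM reg max} and Proposition \ref{prop no stable reduce to max JMreg} in place of Theorem \ref{thm rigidity jm regular}. The idea is to combine the nonabelian Hodge correspondence with the explicit parametrization of $\cM^{\max}(\rG_0,\fg_1)$ by polystable $\hat\rC$-bundles, and then read off the holonomy representation of the resulting flat connection.

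For the forward direction, Theorem \ref{thm param of nonJM reg max} shows that any polystable maximal $(\rG_0,\fg_1)$-Higgs pair is isomorphic to $(E_\rT\otimes E_{\hat\rC}(\rG_0),e)$ for some polystable $\hat\rC$-bundle $E_{\hat\rC}$. I would then construct a solution to the Hitchin equations \eqref{eq Hitchin eq} as $h=h_\rT\otimes h_{\hat\rC}$, where $h_\rT$ is the uniformizing metric of Example \ref{ex uniformizing metric} and $h_{\hat\rC}$ is the flat metric on $E_{\hat\rC}$ provided by Ramanathan's theorem (Remark \ref{rem flat metric conn}). Since $\hat\rC$ commutes with the $\fsl_2$-triple $\{f,h,e\}$ and the compact conjugation is chosen so that $\tau(e)=-f$, the curvatures add and the commutator $[\varphi,-\tau(\varphi)]=[e,f]=h$ is supplied entirely by the $\rS$-factor, so the two summands decouple. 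The resulting flat connection
\[D=A_{h_\rT}+e+f+A_{h_{\hat\rC}}\]
takes values in the Lie algebra of $\rS^\R\cdot\hat\rC^\R\subset\rG^\R$, and since $\rS^\R$ and $\hat\rC^\R$ are commuting subgroups the holonomy factors as a product $\rho_u*\rho_{\hat\rC}$, with $\rho_u$ (a lift of) the uniformizing representation (cf.\ Remark \ref{rem higgs field is}) and $\rho_{\hat\rC}$ an arbitrary representation into $\hat\rC^\R$.

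The converse reverses the construction: any $\rho=\rho_u*\rho_{\hat\rC}$ as in the statement yields a flat $\rG$-bundle whose harmonic metric splits as $h_\rT\otimes h_{\hat\rC}$, and the corresponding Higgs bundle has the form $(E_\rT\otimes E_{\hat\rC}(\rG),e)$, which is a maximal Hodge bundle of type $(\rG_0,\fg_1)$ by Theorem \ref{thm param of nonJM reg max}. The main point I would check carefully in implementing this plan is that the chosen compact conjugation $\tau$ restricts simultaneously to compact involutions on both $\rS$ and $\hat\rC$, so that the product decomposition $\rho_u*\rho_{\hat\rC}$ is meaningful. This follows from $\hat\rC<\hat\rG_0<\rG_0$ together with $\theta|_{\fg_0}=\Id$, which forces $\theta|_{\hat\fc}=\Id$ and therefore $\tau|_{\hat\fc}$ to be a compact involution of $\hat\fc$; analogously the relation $\tau(e)=-f$ guarantees that $\tau$ restricts to a compact form on $\fs=\langle f,h,e\rangle$.
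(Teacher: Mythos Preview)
Your proposal is correct and follows exactly the approach the paper intends: the paper simply declares the proof ``immediate,'' meaning one reruns the argument of Theorem~\ref{thm jm reg rigidity of reps for maximal} with $\hat\rC$ in place of $\rC$ and Theorem~\ref{thm param of nonJM reg max} in place of Theorem~\ref{thm rigidity jm regular}, which is precisely what you have written out. Your additional verification that $\tau$ restricts to compact forms on both $\fs$ and $\hat\fc$ is a useful detail the paper leaves implicit.
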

\begin{remark}
 As in the JM-regular case, such representations factor through the real form $\rG^\R<\rG.$ However, unlike the JM-regular setting, the $\rG^\R$-centralizer of these representations is not necessarily compact since $\hat\rC$ is not the full $\rG$-centralizer of the $\fsl_2$-triple $\{f,h,e\}.$
\end{remark}
Note that Corollary \ref{cor JM reg max vhs} now holds without the JM-regular assumption.

\begin{corollary}\label{cor JM reg max vhs}
Consider a phvs $(\rG_0,\fg_1)$. A variation of Hodge structure $(\rho,f_\rho)$ associated to a maximal Hodge bundle of type $(\rG_0,\fg_1)$ if and only if $f_\rho:\widetilde X\to\rG^\R/\rH_0^\R$ is a totally geodesic embedding which maximizes the holomorphic sectional curvature. 
If $(\rG_0,\fg_1)$ is not JM-regular then $f_\rho$ factors through a maximal JM-regular subdomain
\[f_\rho:\widetilde X\to\hat\rG^\R/\hat\rH_0^\R\hookrightarrow \rG^\R/\rH_0^\R.\]
\end{corollary}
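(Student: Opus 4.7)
The plan is to reduce the non JM-regular case to the already-established JM-regular version of the corollary by invoking the reduction to a maximal JM-regular subspace, and then to verify that the inclusion of the sub period domain into the full period domain preserves both total geodesy and the maximal value of the holomorphic sectional curvature.

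First I would take a maximal Hodge bundle of type $(\rG_0,\fg_1)$ and apply Proposition \ref{prop no stable reduce to max JMreg} together with Theorem \ref{thm param of nonJM reg max} to obtain a reduction to a maximal Hodge bundle of type $(\hat\rG_0,\hat\fg_1)$ for a maximal JM-regular phvss $(\hat\rG_0,\hat\fg_1)$. Via the nonabelian Hodge correspondence and Proposition \ref{prop equiv to reduction}, this reduction is equivalent to the representation $\rho$ factoring through the Hodge-type real form $\hat\rG^\R<\rG^\R$ and the equivariant holomorphic map $f_\rho$ factoring through the sub period domain $\hat D:=\hat\rG^\R/\hat\rH_0^\R$. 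Applying the JM-regular version of Corollary \ref{cor JM reg max vhs} to $(\hat\rG_0,\hat\fg_1)$ then yields that the factored map is a totally geodesic embedding of $\widetilde X$ into $\hat D$ maximizing the holomorphic sectional curvature of $\hat D$.

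Next I would check that the natural inclusion $\hat D\hookrightarrow D:=\rG^\R/\rH_0^\R$ is a totally geodesic holomorphic isometry that respects horizontal distributions. Compatibility of the complex structures and isometry follow from the construction in \S\ref{sec hol sec curv}, since both invariant Hermitian metrics are built from the Killing form of $\fg$ and the compact conjugation $\tau$, which preserves $\hat\fg$. Total geodesy comes from $\hat\fg^\R$ being reductive with induced $\Z$-grading matching the ambient one. For the curvature comparison, recall from Proposition \ref{prop:sectional_curvature} that the normalized maximum of the holomorphic sectional curvature on $D$ along $\fg_1$ equals $-1/\rk_T(\rG_0,\fg_1)$, attained precisely on the open $\rG_0$-orbit $\Omega$. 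By Proposition \ref{prop max jm reg for e}, the open $\hat\rG_0$-orbit $\hat\Omega\subset\hat\fg_1$ is contained in $\Omega$, and one computes $\rk_T(e)=\rk_T(\hat\rG_0,\hat\fg_1)=\rk_T(\rG_0,\fg_1)$ for $e\in\hat\Omega$. Hence the holomorphic sectional curvature of $D$ along the image of $f_\rho$ achieves the maximum of the ambient period domain, which together with the factorization statement finishes the forward direction.

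For the converse, suppose $f_\rho$ is a totally geodesic embedding into $D$ maximizing the holomorphic sectional curvature. Then $\partial f_\rho$, which under the nonabelian Hodge correspondence is identified with the Higgs field $\varphi$, must take values generically in the open orbit $\Omega$ (since the maximum of $K_{\rm norm}$ along $\fg_1$ is attained only there), so $\rk_T(\varphi)=\rk_T(\rG_0,\fg_1)$. Re-running the Schwarz lemma computation from the proof of Corollary \ref{cor am ineq for comp}, specifically the identity $f_\rho^*\omega=-\chi_T(iF_h)$, with this sharper rank estimate then yields $\tau(E,\varphi)=-\rk_T(\rG_0,\fg_1)(2g-2)$, so the Hodge bundle is maximal.

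The main obstacle will be the second step, namely rigorously justifying that the homogeneous inclusion $\hat D\hookrightarrow D$ is totally geodesic with respect to the Chern connection of the (non-K\"ahler) Hermitian metric of \S\ref{sec hol sec curv} and matches horizontal directions, so that holomorphic sectional curvature computations commute with the inclusion. The key input is that $\hat\fg^\R$ is $\tau$-stable and inherits a compatible $\Z$-grading, so that $\hat D$ is assembled from exactly the same Lie-theoretic data as $D$.
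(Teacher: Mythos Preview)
Your proposal is correct and follows essentially the same route the paper intends. In the paper this corollary is simply stated as an immediate consequence of Theorem \ref{rigidity non JM reg case} together with the JM-regular version of the corollary, with no further argument given; you have just spelled out the details (factoring through $\hat D$, compatibility of the inclusion $\hat D\hookrightarrow D$ with the Hermitian metric and curvature, and the converse via the equality case of the Schwarz lemma) that the paper leaves implicit.
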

%\begin{example}
%  Hermitian case, in particular Domingo's original results on $\rSU(1,n)$ maximal representation rigidity...
%\end{example}
\begin{example}
 Recall from example \ref{ex phvs M_p,q} and \ref{ex max jm reg} that the phvs $(\rGL_p\C\times \rSO_q\C,M_{p,q})$ with $q<p$ is not JM-regular and a maximal JM-regular phvss is isomorphic to $(\rGL_{q}\C\times \rGL_{p-q}\C\times \rSO_q\C,M_{q,q}).$ 
Following example \ref{ex so(2p,q) jm reg details}, an $\rSO_{2p+q}\C$-Higgs bundle $(E,Q,\Phi)$ is a Hodge bundle of type $(\rG_0,\fg_1)$ if and only if $E$ splits holomorphically as $V\oplus W\oplus V^*$ where $V$ is a rank $p$ isotropic subbundle and the Higgs field is determined by a holomorphic map $\theta:W\to V\otimes K$  as in \eqref{eq higgs field form}.
Such a Higgs bundle is a maximal $(\rG_0,\fg_1)$-Higgs pair if and only if $V$ decomposes holomorphically as $V=WK^{-1}\oplus V_{q-p}$ where $\rk(V_q-p)$ is a rank $q-p$ degree zero polystable vector bundle and
\[\theta=\begin{pmatrix}
  \Id_{W}\\0
\end{pmatrix}W\to (WK^{-1}\oplus V_{q-p})\otimes K= V\otimes K.\]
The resulting representations $\rho:\pi_1(X)\to\rSO_{2p+q}\C$ factors through $\rSO(2q,q)\times \rU_{p-q}<\rSO(2p,q)<\rSO_{2p+q}\C.$
\end{example}

\bibliographystyle{plain}
\bibliography{am_inequalities.bib}
\end{document}